\newcommand\NN{{\mathbb N}}
\newcommand\ZZ{{\mathbb Z}}
\crefname{figure}{figure}{}
\Crefname{figure}{Figure}{}
\newcommand{\beq}[1]{\begin{equation}\label{#1}}
\newcommand{\enq}[0]{\end{equation}}
\newcommand{\se}{\subseteq}
\newcommand{\cC}{\mathcal{C} }
\newcommand{\cK}{\mathcal{K} }
\newtheorem{theorem}{Theorem}[section]
\newtheorem{problem}{Problem}
\newtheorem{conjecture}[problem]{Conjecture}
\newtheorem{lemma}[theorem]{Lemma}
\newtheorem*{claim*}{Claim}
\newtheorem{proposition}[theorem]{Proposition}
\newtheorem{corollary}[theorem]{Corollary}
\newtheorem{fact}[theorem]{Fact}
\newtheorem*{fact*}{Fact}
\theoremstyle{definition}
\newtheorem{defn}[theorem]{Definition}
\newtheorem{remark}[theorem]{Remark}
\newtheorem*{remark*}{Remark}
\newenvironment{subproof}[1][\proofname]{%
  \begin{proof}[#1]%
}{%
  \end{proof}%
}
\definecolor{andrey}{rgb}{0.3, 0.65,,0.2}
\definecolor{trees}{rgb}{1, .7,.5}
\definecolor{vertex}{rgb}{.65, 0,.85}
\title{A linear upper bound for zero-sum Ramsey numbers of bounded degree graphs}
\author{Jasmin Katz\thanks{London School of Economics. Email: \tt{j.katz3@lse.ac.uk}} \and Xiaopan Lian\thanks{Center for Combinatorics and LPMC, Nankai University, Tianjin 300071, 
China. E-mail: {\tt{Lian@nankai.edu.cn.}}Research supported by National Natural Science Foundation of China No. 12371351.} \and Alexandru Malekshahian\thanks{Mathematical Institute, University of Oxford. E-mail: {\tt{alex.malekshahian@maths.ox.ac.uk}}.  Research supported by ERC Advanced Grant 883810.} \and Andrey Shapiro\thanks{King's College London. E-mail: \tt{andrey.shapiro@kcl.ac.uk}}}
\date{}
\begin{document}
\maketitle
 \begin{abstract}
 Let $G$ be a graph and $\Gamma$ a finite abelian group. The zero-sum Ramsey number of $G$ over $\Gamma$, denoted by $R(G, \Gamma)$, is the smallest positive integer $t$ (if it exists) such that for any edge-coloring $c:E(K_t)\rightarrow \Gamma$, there exists $G'\subset K_t$ isomorphic to $G$ so that $\sum_{e\in E(G')} c(e)=0_\Gamma$, where $0_\Gamma$ is the identity of $\Gamma$.

 We prove a linear upper bound $R(G, \Gamma)\leq Cn$ that holds for every $n$-vertex graph $G$ with bounded maximum degree and every finite abelian group $\Gamma$ with $|\Gamma|$ dividing $e(G)$.

    \end{abstract}

\section{Introduction}

Ramsey theory is a central subject in combinatorics which has very recently seen a number of impressive developments. Tracing its origins to the work of Frank Ramsey in 1930 \cite{ramsey}, the (classical) Ramsey number of a graph $G$ is the minimum $n\in\mathbb{N}$ such that any red-blue coloring of the edges of the complete graph $K_n$ contains a monochromatic copy of $G$. Ramsey's theorem states that $R(G)$ exists for all $G$. 

In this work we are interested in a sub-area of Ramsey theory called \emph{zero-sum} Ramsey theory. This has its origin in the celebrated Erd\H{o}s-Ginzburg-Ziv theorem \cite{EGZ}, stating that given a set of $2m-1$ elements in $\mathbb{Z}_m$, there exists a subset of $m$ elements summing to zero. This was generalized by Olson \cite{OLSON1976}, who showed that the same statement holds with any group (not necessarily abelian) in place of $\ZZ_m$. Motivated by these results, the following notion of zero-sum Ramsey numbers was introduced by Bialostocki and Dierker \cite{BD92}.  Recall that the \emph{exponent} $\exp(\Gamma)$ of a finite abelian group $\Gamma$ is the smallest positive integer $m$ such that $g\cdot m=e$ for all $g\in \Gamma$; for a finite abelian group this is the same as the least common multiple of the orders of all the elements of $\Gamma$.

\begin{defn}\label{def:zero_sum}
    Given a (finite) graph $G$ and a finite abelian group $(\Gamma, +)$ such that $\exp(\Gamma)$ divides $e(G)$, the \emph{zero-sum Ramsey number} of $G$ over $\Gamma$, denoted by $R(G, \Gamma)$,  is the minimum integer $t$ such that for any edge-coloring $c:E(K_t)\rightarrow \Gamma$, there exists $G'\subset K_t$ isomorphic to $G$ so that $\sum_{e\in E(G')} c(e)=0_\Gamma$, where $0_\Gamma$ is the identity of $\Gamma$.
\end{defn}
It is easy to see that $R(G, \Gamma)\leq R_{|\Gamma|}(G)$, where $R_r(G)$ represents the $r$--color (classical) Ramsey number of $G$, and therefore $R(G, \Gamma)$ is finite. A monochromatic coloring of $E(K_t)$ with a non-zero element of $\Gamma$ also shows that the requirement for $\exp(\Gamma)$ to divide $e(G)$ is necessary for this quantity to be finite. 

One of the early successes of zero-sum Ramsey theory was the exact determination of $R(G, \mathbb{Z}_2)$ for all graphs $G$ by Caro \cite{Caro94}. This is one of the few known cases where the zero-sum Ramsey numbers are known exactly. Other, more recent cases that have been settled are $R(F, \mathbb{Z}_3)$ when $F$ is a forest  \cite{ADP25} and $R(F, \mathbb{Z}_2^2)$ when $F$ is an odd forest \cite{ACPS22}; see also the survey \cite{Carosurvey} for a broad overview of known results. Given the difficulty in determining these quantities exactly, it is natural to instead ask for estimates, and the majority of work in this field has indeed focused on proving upper bounds \cite{AC93,CD25,Caro92}.

In fact, \Cref{def:zero_sum} was initially stated for cyclic groups $\Gamma$, and the majority of literature on the subject also focuses on this case. Notable exceptions include \cite{ACPS22}, where the groups $\mathbb{Z}_2^d$ were investigated, or \cite{CKMS25, CGHS25} and the references therein, which provide upper bounds for the existence of \emph{some} zero-sum cycle, without any control over its length, over $\mathbb{Z}_p^d$ and over general groups of odd order, respectively.

Another notable direction was explored by Caro \cite{Caro92allgroups}, where in analogy to Olson's generalization of the Erd\H{o}s-Ginzburg-Ziv theorem, he studied zero-sum Ramsey numbers for \emph{non-abelian} $\Gamma$. More precisely, Caro showed that for any finite group $\Gamma$ and any $n$ such that $|\Gamma|$ divides $\binom{n}{2}$, there exists some $c(\Gamma)$ such that $R(K_n, \Gamma) \le n +c(|\Gamma|)$, where now we ask for a copy of $G$ and \emph{some} ordering of its edges such that the corresponding sum equals $0_\Gamma$. Until now, this is the only result we are aware of that establishes an upper bound for zero-sum Ramsey numbers over a large class of groups all at once.

Curiously, zero-sum Ramsey numbers are not monotone under graph containment (unlike their classical counterparts): for example, $R(C_4, \mathbb{Z}_2)=4$ but $R(2K_2, \mathbb{Z}_2)=5$, where $2K_2$ is a matching of size $2$. It therefore does not follow from Caro's result above that zero-sum Ramsey numbers of all graphs are small; and indeed, it is known that if $k=e(G)$ then $R(G, \mathbb{Z}_k)\geq R(G)$ \cite{BD92,Caro92}. This lower bound serves us as intuition for the correct behaviour of $R(G, \mathbb{Z}_k)$, and our main result provides an upper bound of the same order of magnitude for a large family of graphs $G$ and any finite abelian group $\Gamma$ with $|\Gamma|$ dividing $e(G)$. Namely, we prove that the zero-sum Ramsey number of any bounded-degree graph $G$ over any such $\Gamma$ is linear.

\begin{theorem}\label{main}
    Let $\Delta$ be a positive integer. Then there exists a constant $C=C(\Delta)$ such that for any graph $G$ with maximum degree $\Delta$ and any finite abelian group $\Gamma_0$ such that $|\Gamma_0|$ divides $e(G)$, we have

    $$ R(G, \Gamma_0)\leq C\cdot v(G).$$
\end{theorem}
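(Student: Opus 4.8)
The plan is to reduce the general-group statement to the cyclic prime-power case and then build a zero-sum copy of $G$ greedily, correcting the sum at the end. First I would use the structure theorem for finite abelian groups to write $\Gamma_0\cong\mathbb{Z}_{q_1}\times\cdots\times\mathbb{Z}_{q_k}$ with each $q_i$ a prime power; since a copy of $G$ is zero-sum over $\Gamma_0$ iff it is zero-sum in each coordinate, and the divisibility $|\Gamma_0|\mid e(G)$ passes to each $q_i$, it suffices to handle $\Gamma=\mathbb{Z}_{p^a}$ (working one coordinate at a time and iterating costs only a bounded factor in the constant, since $k\le\log_2|\Gamma_0|$ and $|\Gamma_0|$ is bounded once we fix... actually $|\Gamma_0|$ need not be bounded, so some care is needed here — see the obstacle below).

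For the cyclic prime-power case $\Gamma=\mathbb{Z}_{p^a}$, the idea is a two-phase embedding of $G$ into $K_t$ with $t=Cv(G)$. In Phase 1, embed all but a bounded number of vertices of $G$ arbitrarily (say via a greedy or random embedding, which is trivially possible since $t\gg v(G)$), producing a partial copy whose current edge-sum is some $s\in\mathbb{Z}_{p^a}$. In Phase 2, embed the remaining $O_\Delta(1)$ vertices so as to cancel $s$: the point is that the edges incident to the last few vertices have colours we are free to choose from a huge palette of vertices of $K_t$, and we want to show that as the images of these vertices range over all possibilities, the resulting edge-sums hit every residue class in $\mathbb{Z}_{p^a}$ — in particular $-s$. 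This is where an Erdős–Ginzburg–Ziv-type argument enters: among $t$ candidate images for a single leftover vertex $u$ of degree $d$, the $d$-term sums $\sum_{i}c(uv_i)$ over the already-placed neighbours $v_i$ take many values, and a Davenport-constant / EGZ bound guarantees that by choosing among roughly $|\Gamma|$ many candidates we can realize any prescribed value, provided the multiset of achievable increments generates $\mathbb{Z}_{p^a}$.

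The main obstacle — and the crux of the argument — is the last point: the increments might not generate the group, or might all lie in a proper subgroup $p\mathbb{Z}_{p^a}$, in which case $-s$ may be unreachable. To handle this one has to argue structurally: if for *every* choice of embedding of the penultimate vertices all reachable edge-sums lie in a coset of $p^{b}\mathbb{Z}_{p^a}$, then the colouring $c$ must itself be highly structured (essentially constant modulo $p^{a-b}$ on large sets), and one can then pass to the quotient group $\mathbb{Z}_{p^b}$, where $e(G)$ is still divisible by the order, and induct on $a$. The divisibility hypothesis $|\Gamma|\mid e(G)$ is exactly what makes the base case $p\nmid$ (and the quotient steps) work, since it pins down the "default" edge-sum of a monochromatic-like colouring to be $e(G)\cdot(\text{colour})=0$. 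Making the dependence of $C$ on $\Delta$ (and not on $|\Gamma_0|$) come out right will require doing the coordinate-by-coordinate correction simultaneously rather than iteratively, i.e. reserving disjoint blocks of $O_\Delta(1)$ leftover vertices, one block per prime-power factor, and correcting all coordinates in parallel — so that the number of reserved vertices is $O_\Delta(\log|\Gamma_0|)$, which is absorbed into the $t=Cv(G)$ budget only if $\log|\Gamma_0|=O(v(G))$; but $|\Gamma_0|\mid e(G)\le \Delta v(G)/2$, so indeed $\log|\Gamma_0|=O_\Delta(\log v(G))=O(v(G))$, and the bound closes.
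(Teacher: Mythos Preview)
Your proposal has the right high-level dichotomy (either the colouring gives you enough freedom to adjust the sum, or it is so structured that a monochromatic-type argument finishes), but the quantitative core is wrong, and this is not a detail that can be patched.

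The main gap is your claim that $O_\Delta(1)$ correction vertices per prime-power factor suffice. Consider already the simplest case $\Gamma_0=\mathbb{Z}_p$ with $p$ prime and $p=e(G)$. A single leftover vertex $u$ of degree $d$, with its $d$ neighbours already embedded, contributes one of the values in the set $S_u=\{\sum_i c(w v_i): w\text{ a candidate}\}\subseteq\mathbb{Z}_p$. Nothing prevents $|S_u|=2$ (e.g.\ take a colouring using only two colours). With $r$ independent correction vertices you can hit $S_1+\cdots+S_r$, and Cauchy--Davenport gives $|S_1+\cdots+S_r|\ge\min(p,\,r+1)$ in the worst case, so you need $r\approx p\approx e(G)$ correction vertices, not $O_\Delta(1)$. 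Your appeal to Erd\H os--Ginzburg--Ziv and the Davenport constant does not help: both of these are of order $|\Gamma|$, not $O_\Delta(1)$. For $\mathbb{Z}_p$ there is no nontrivial proper subgroup to quotient by, so your ``induct on $a$'' escape hatch is unavailable; and when the increments genuinely take only two values you have learned almost nothing about the global structure of the colouring from one star.

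The paper's proof faces exactly this obstruction and resolves it by reserving $\Theta(e(G)/\Delta^{O(1)})$ disjoint ``gadgets'' in $G$ (pairs of adjacent vertices together with their neighbourhoods), each of which contributes a set of size $\ge 2$ (or $\ge\beta$ in an initial phase) to a Kneser-type sumset argument. The delicate part is the structure side: when a gadget of the required multiplicity cannot be found, one does not get that the colouring is constant, but only that it is ``$\kappa$-well-behaved'' (essentially, $c_0(xy)$ is determined modulo a subgroup by a vertex colouring), and one must track a minimal such structure and eventually invoke the Chv\'atal--R\"odl--Szemer\'edi--Trotter linear Ramsey bound to find a monochromatic copy on a large colour class. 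Your sketch gestures at a quotient/induction in this situation but does not supply the mechanism; in particular, the structure you extract from a single failed correction step is far too local to drive such an induction.

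Finally, the coordinate-by-coordinate reduction is not as clean as you suggest: you correctly flag that correcting the $\mathbb{Z}_{q_i}$-coordinates must be done on the \emph{same} copy of $G$, but once you reserve $\Theta(e(G))$ correction vertices (as is actually necessary by the above), this reduction buys nothing, and the paper does not use it.
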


Note that it suffices to prove \Cref{main} for $|\Gamma_0|=e(G)$, since in general we have $\Gamma_0\leqslant\Gamma_0 \times \mathbb{Z}_{e(G)/|\Gamma_0|}$ and the latter is a group of order precisely $e(G)$. It also clearly suffices to consider graphs without isolated vertices, in which case $v(G)$ and $e(G)$ are at most a factor of $\Delta$ apart. We henceforth assume that $|\Gamma_0|=e(G)=n$ and prove the above theorem with an upper bound of the form $C\cdot n$.

As mentioned above, we know that $R(G, \Gamma_0)\leq R_n(G)$, the $n$-color classical Ramsey number of $G$. This upper bound is in general much larger than linear in $n$. It is worth noting, however, that if we instead fix the number of colors to be $r$, say, then $R_r(G)$ is indeed linear in $n$.

\begin{theorem}[Chv\'atal, R\"odl, Szemer\'edi and Trotter \cite{ChRSzT}]
    \label{ramseylinear}
    Let $r, \Delta\geq 2$ be positive integers. Then there exists a constant $C'=C'(r, \Delta)$ such that for any graph $G$ with maximum degree $\Delta$, we have 
    \[R_r(G)\leq C'\cdot v(G).\]
\end{theorem}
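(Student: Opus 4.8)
The plan is to deduce this from Szemer\'edi's regularity lemma, following the original argument. Fix $r, \Delta \geq 2$, set $k := \Delta + 1$, and recall that every graph $G$ with $\Delta(G) \leq \Delta$ is properly $k$-colourable by a greedy argument. Let $n := v(G)$, let $N := C' n$ with $C'$ a large constant to be fixed at the end, and consider an arbitrary colouring $c : E(K_N) \to [r]$. Choose a regularity parameter $\eps = \eps(r, \Delta) > 0$, small enough for the estimates below, and apply the multicolour regularity lemma: we obtain an equitable partition $V(K_N) = V_0 \cup V_1 \cup \dots \cup V_M$ with $M = M(\eps) = O_\eps(1)$ (and $M$ as large as we wish), $|V_0| \leq \eps N$, $|V_1| = \dots = |V_M| = m \geq (1-\eps)N/M$, and all but at most $\eps M^2$ of the pairs $(V_i, V_j)$ being $\eps$-regular simultaneously in each of the $r$ colour classes.

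Next I would pass to a reduced graph $R$ on vertex set $[M]$, joining $i \sim j$ whenever $(V_i, V_j)$ is $\eps$-regular in all $r$ colours, and colouring the edge $ij$ by a colour in which that pair has density at least $1/r$ (one exists by pigeonhole). Thus $R$ is obtained from $K_M$ by deleting at most $\eps M^2$ edges, so its complement has average degree $\leq 2\eps M$ and hence, by a Tur\'an/Caro--Wei bound, $R$ contains a clique on at least $M/(2\eps M + 1)$ vertices; taking $\eps \leq 1/(3 R_r(K_k))$ forces this clique to have size at least $R_r(K_k)$, and applying the ordinary finite Ramsey theorem to the edge-colouring of $R$ restricted to it produces clusters $V_{i_1}, \dots, V_{i_k}$ that are pairwise $\eps$-regular, each pair having density at least $d := 1/r$ in one common colour, say colour $1$.

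Finally I would invoke the embedding (``key'') lemma: since $G$ has a proper $k$-colouring $V(G) = W_1 \cup \dots \cup W_k$ and maximum degree $\Delta$, and the clusters $V_{i_1}, \dots, V_{i_k}$ are pairwise $\eps$-regular of density $\geq d$ in colour $1$ with $m \geq C'' n$ for a suitable $C'' = C''(\eps, d, \Delta)$, the colour-$1$ subgraph of $K_N$ contains a copy of $G$ (mapping $W_t$ into $V_{i_t}$). This is the heart of the argument, and it is where bounded degree enters: one embeds the vertices of $G$ one at a time, maintaining for each not-yet-embedded vertex $v$ a candidate set $C(v) \subseteq V_{i_{t(v)}}$ of admissible images (the colour-$1$ neighbours of all already-embedded neighbours of $v$). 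Each embedding step shrinks a given candidate set by a factor $\geq d - \eps$, and because $v$ has at most $\Delta$ neighbours in $G$ this can happen at most $\Delta$ times before $v$ itself is embedded, so $|C(v)|$ stays above $\eps m$ — keeping $\eps$-regularity usable, which requires $\eps$ small relative to $d$ and $\Delta$ — and at the moment of embedding $v$ it exceeds the number $\leq n$ of already-used vertices in its target cluster, provided $\eps < 1/\Delta$ and $m \geq C'' n := n / \big((1 - \Delta\eps)(d - \eps)^{\Delta}\big)$. Choosing $C'$ large enough that $(1-\eps) C' n / M \geq C'' n$ — legitimate, since $\eps$, hence $M = M(\eps)$, hence $C''$, are all fixed before $C'$ — guarantees $m \geq C'' n$ and finishes the proof, with $R_r(G) \leq C' v(G)$ and $C' = C'(r, \Delta)$.

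The main obstacle is the embedding lemma together with the bookkeeping that makes it run with clusters of size only \emph{linear} in $n$ (rather than polynomial): one must verify that candidate sets never fall below the threshold $\eps m$ at which $\eps$-regularity stops giving information, and that at each step an admissible image still remains after excluding the at most $n$ vertices of the target cluster used so far. A secondary point is the order of quantifiers: $\eps$ is chosen small in terms of $r$ and $\Delta$ (in particular $\eps < 1/\Delta$ and $\eps \leq 1/(3R_r(K_k))$), this determines $M(\eps)$ and then $C''$, and only then is $C'$ taken large relative to $M$ and $C''$. We remark that later, regularity-free arguments — via dependent random choice, due to Graham--R\"odl--Ruci\'nski and to Conlon--Fox--Sudakov — give a constant $C'$ that is merely polynomial in $\Delta$; the proof sketched above is the original one and yields only a tower-type bound on $C'$.
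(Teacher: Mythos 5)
The paper does not prove this theorem; it imports it verbatim from Chv\'atal, R\'odl, Szemer\'edi and Trotter \cite{ChRSzT} and only remarks afterwards that their proof goes through the regularity lemma (hence the tower-type dependence on $\Delta$). Your sketch is a faithful reconstruction of that original argument: multicolour regularity, a reduced graph with majority colours, a Tur\'an-type clique of size $R_r(K_{\Delta+1})$ surviving the $\eps M^2$ irregular pairs, Ramsey on the reduced clique, and the greedy candidate-set embedding that exploits the proper $(\Delta+1)$-colouring of $G$ and the bound $|C(v)| \geq (d-\eps)^{\Delta} m \geq \eps m$. The order of quantifiers ($\eps$ before $M(\eps)$ before $C''$ before $C'$) is also handled correctly, which is the spot where such sketches most often go astray.

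One small arithmetical slip worth flagging: the threshold for successfully placing a vertex should be $\bigl((d-\eps)^{\Delta} - \Delta\eps\bigr)m > n$, since $\eps$-regularity bounds the number of bad images by $\eps m$ (a fraction of the \emph{cluster}, not of the current candidate set), and there are at most $\Delta$ unembedded neighbours to protect plus at most $n$ already-used vertices in the target cluster. Your expression $(1-\Delta\eps)(d-\eps)^{\Delta}m$ treats the bad count as a fraction of $C(v)$ and is therefore slightly too permissive (one checks $(1-\Delta\eps)(d-\eps)^{\Delta} > (d-\eps)^{\Delta} - \Delta\eps$ when $(d-\eps)^{\Delta}<1$). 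This does not affect the structure of the proof, only the definition of $C''$; the fix is to require $\eps$ small enough that $(d-\eps)^{\Delta} - \Delta\eps>0$ and set $C'' := 1/\bigl((d-\eps)^{\Delta} - \Delta\eps\bigr)$.
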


The constant $C'$ obtained in \cite{ChRSzT} has a tower-type dependency on $\Delta$, owing to the use of the Szemer\'edi regularity lemma. For $r=2$, the best known upper bound of $C'(\Delta, 2)\leq 2^{c\Delta\log \Delta}$ for some absolute constant $c>0$ is due to Conlon, Fox and Sudakov \cite{CFS12} -- see also the survey \cite{CFS15} for a history of this problem; this is generally believed to not be tight, and the situation for $r\geq 3$ is even less clear. In our proof of \Cref{main}, however, we make use of \Cref{ramseylinear} (for general $r$), and hence the value of $C$ as a function of $\Delta$ depends on the value of $C'$ in \Cref{ramseylinear}. Due to this, we do not make an attempt to optimize our arguments for the value of $C$ throughout the paper, and our proof yields $C(\Delta)\leq \Delta^{42\Delta^6}\cdot C'(r,\Delta)$ for $r=200\Delta^{12}$. However, this constant could be sharpened in the setting of $v(G)\gg |\Gamma_0|$. We briefly discuss this in \Cref{sec.conclusion}, along with some interesting open questions.

\subsection{Terminology and notation}\label{sec.terminology}

Given an abelian group $\Gamma$, we abuse terminology by saying an element $g$ in $\Gamma$ has order $t\in \mathbb{Z}_{\geq 1}$ if $tg = 0$. We do not require $t$ to be the least such positive integer. Given a subset $S$ of a group $\Gamma$, we let $\langle S \rangle$ be the subgroup generated by $S$, that is, the smallest subgroup of $\Gamma$ containing $S$.    

Given $R$ a set of vertices, we denote $E(R)=\{\{x,y\}:x,y\in R')\}$. Informally, sets of vertices are treated as having a complete graph structure.

Given $G$ a graph, $\Gamma$ a finite abelian group, $R$ a set of vertices disjoint from $V(G)$, and $c:E(R)\to \Gamma$ a coloring of the edges of $R$ with elements from $\Gamma$, we establish the following notation. For $x,y,z\in R$, $u\in V(G)$, and $U,V\subset R$, we denote by $\overline{N}(u)$ the subgraph of $G$ induced by the vertices $N(u) \cup \{u\}$;  $xy$ the edge $\{x,y\}$;  $xyz\coloneqq \{xy,yz\}$  the path of length two from $x$ through $y$ to $z$; and $xV$ or $Vx$ the star $\{xv:v\in V\}$. Likewise, we define $yxV=\{yx\}\cup xV$, and $UyxV=Uy \cup yxV$. Note that $U$ and $V$ need not be disjoint. For any two sets $A, B$ containing elements of $\Gamma$, we write $A + B = \{a + b: a \in A, b \in B\}$ for the sumset of $A$ and $B$. In the case that $A =\{a\}$ contains a single element we let $a + B$ denote $\{a\} + B$. If $B$ is a subgroup of $\Gamma$ then $a + B$ is a \emph{coset} of $B$. For any graph $G'$ with vertex set in $R$, we set $c(G') \coloneqq  \sum_{e\in E(G')} c(e)$, e.g. $c(UyxV)=c(xy) +\sum_{u\in U} c(uy) +\sum_{v\in V} c(xv).$

We now fix the following notions for the remainder of this manuscript. 
Fix $G$ to be an arbitrary graph with maximum degree $\Delta$ and $e(G)=n$. Further, let $\Gamma_0$ be an arbitrary finite abelian group of order $n$. Then, let $R_0$ be a set of $C(\Delta)\cdot n$ vertices and fix $c_0:E(R_0)\to \Gamma_0$ an arbitrary edge coloring of $E(R_0)$.
To prove \Cref{main}, we wish to show that there is some injective mapping $f:V(G)\to R_0$ such that $$\sum_{xy\in E(G)}c_0(f(x)f(y))=0_{\Gamma_0}.$$

\subsection{Organization}
The rest of this paper is organized as follows. In \Cref{sec.example} we outline our general proof strategy. In \Cref{sec.algebra} we introduce our algebraic preliminaries. In \Cref{sec.blueprints,sec.gadgets} we introduce some of the combinatorial tools we will use and complete our setup. In \Cref{Sec.algorithm} we provide our algorithm and prove that the output is a zero-sum copy of $G$ (\Cref{main}). In \Cref{gadget_counts,Sec.vertexcounts,sec.nofail} we verify the correctness of the algorithm. We conclude with a discussion of open problems and further work in \Cref{sec.conclusion}, and prove a certain technical lemma in the Appendix.

\section{Intuition and proof outline for a path}\label{sec.example}

\begin{figure}[htb]
\captionsetup{justification=centering}
\centering
        \begin{tikzpicture}

        \fill[] (-1, 0) circle(0.05) node[above] {$u$};
        \fill[] (1, 1) circle(-0.05) node[above] {$w_1$};
        \fill[] (1, -1) circle(-.05) node[below] {$w_2$};
        \fill[] (3, 0) circle(0.05) node[above] {$v$};
        
        \draw[thick] (-1, 0) -- (1, 1) -- (3,0);

        \draw[thick] (-1, 0) -- (1, -1) -- (3,0);

        \end{tikzpicture}
\caption{Example of a $(2,2)$ gadget.
}\label{simple_gadget}
\end{figure}
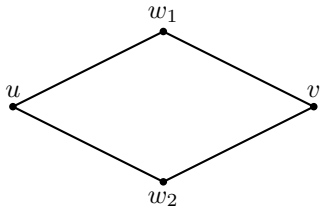

Suppose for now, that $G$ is a path of length $p$ and $\Gamma_0=\mathbb{Z}_p$, where $p$ is prime. Then consider \Cref{simple_gadget} and think of it as two paths of length two going from $u$ to $v$. Now suppose that we can find four such vertices in $R_0$ so that these two paths sum to different values: $c_0(uw_1v)\neq c_0(uw_2v)$. We will refer to such a copy as a gadget. Now, suppose that we manage to inductively find $M$ such gadgets in $R_0$, say $\{u_1,w_{1,1},w_{1,2},v_1\},\ldots,\{u_M,w_{M,1},w_{M,2},v_M\}$, for some large $M$, and that moreover these are pairwise disjoint except for possibly having $v_i=u_{i+1}$ for some values of $i$. Write $A_i=\{c_0(u_iw_{i,1}v_i), c_0(u_iw_{i,2}v_i)\}$, and note this is a subset of $\Gamma_0$ of cardinality precisely two. Now chain the gadgets together, meaning that for every $i\leq M-1$ we either have $v_i=u_{i+1}$ or we add the edge $v_iu_{i+1}$, and consider the paths from $u_1$ to $v_M$ obtained by walking along the vertices of each gadget in turn. Note that the set of possible sums obtained along such paths from $u_1$ to $v_M$ is exactly $A_1+c_0(v_1u_2)+A_2+ c_0(v_2u_3)+\dots+c_0(v_{M-1}u_M)+A_M$, where we set $c_0(v_iu_{i+1})=0_\Gamma$ if $v_i=u_{i+1}$. Now, by the Cauchy-Davenport theorem (\Cref{Kneser} applied to cyclic groups of prime order), this set has cardinality at least $\min\{M+1, p\}$. Then, for $M+1\geq p$, we can obtain every element of $\Gamma_0$ as the sum of such a path, and in particular there exists a zero-sum path.

If instead we can't find $M=p-1$ such gadgets in $R_0$, then we know that there exists a vertex subset of $R_0$ of order at least $|R_0|-4p$ on which the coloring is `well-behaved': any two paths of length two between two fixed vertices $x,y$ have the same value. A simple analysis now shows that such a coloring must be monochromatic, and so we find a monochromatic (and hence zero-sum) copy of $G$.

In the case of general $G$ and $\Gamma_0$ as in \Cref{main}, there are several complications that arise when trying to follow the above approach. We go through them in order and give an indication of how our proof will handle them.

\begin{enumerate}
    \item Although we can certainly guarantee to find $p-1$ gadgets (or otherwise find a sufficiently large complete subgraph on which $c_0$ is monochromatic), if we chain these gadgets together, the length of the resulting paths will be more than $p$ (in fact, it could be as large as $3p$).
    
    Note, however, that if we could find paths of a fixed length $\ell\leq p$ between some fixed $u_1$ and $v_M$ which sum up to every element of $\Gamma_0$, then we can simply extend these by appending a fixed path of length $p-\ell$ to $v_M$, and this now yields a zero-sum path of length exactly $p$. In order to do this, we will increase the `multiplicity' of the gadgets we seek: instead of $w_1$ and $w_2$, we will find $w_1$, $w_2$,  $\ldots$, $w_\beta$ for some carefully chosen $\beta\geq 2$, with the property that $c_0(uw_iv)$ are pairwise distinct for $1\leq i\leq \beta$. Then we will need only about $ n/(\beta-1)$ gadgets to obtain paths of some fixed length of at most $3n/(\beta-1)$ which generate all of $\Gamma_0$. \label{multiplicity}
    
    \item Unlike a path, a general bounded degree graph may not contain any vertices of degree two. The above gadgets require that the \emph{free vertices} $w_1$, $w_2$ correspond to a vertex of degree precisely two in $G$ (but this is not required for the \emph{fixed} $u$'s and $v$'s).

     Given a vertex of degree $d$ in our graph $G$, we can replace $u$ and $v$ in \Cref{simple_gadget} with some set $U$ of $d$ vertices (these will be called the \emph{fixed vertices}) and then we will have a set $\{w_1,\ldots,w_\beta\}$ that represents our free vertex of degree $d$. The issue is slightly more subtle, however. The absence of such a gadget does not tell us something strong enough about the coloring being well-behaved, so instead we need to take two adjacent vertices in $G$ with degrees $d$ and $d'$ and we find a gadget consisting of sets $U$ of size $d-1$, $V$ of size $d'-1$ and some free vertices, so that the set of $Uww'V$, where $w$ and $w'$ are free, give us at least $\beta$ different values. A precise definition of a gadget will follow in \Cref{sec.gadgets}; see \Cref{dd'_gadget}. \label{degree}
     
    \item The Cauchy-Davenport theorem only applies in $\mathbb{Z}_p$. We instead use a result due to Kneser (\Cref{Kneser}) which is similar to Cauchy-Davenport and applies to general finite abelian groups, but it only guarantees that our sumset contains some coset of a subgroup of $\Gamma_0$. Because of this, we will find gadgets one by one, and at each step keep track of the sumset they generate. As soon as we generate some coset $r+H\subset \Gamma_0$, we quotient our coloring by $H$ and proceed to find more gadgets in this new coloring. Doing this carefully until the end, we will have generated all of $\Gamma_0$ via our set of gadgets.\label{CD_problem}
    
    \item If $\Gamma_0$ is an arbitrary finite abelian group, then a well-behaved coloring of $R_0$ as above no longer needs to be monochromatic. If the free vertex of $G$ that we are considering has degree $d$ (the above exposition was for $d=2$, and we have hinted at our approach for general $d$ in \cref{degree} above), and if $\Gamma_0$ has elements of order $d$, then we can no longer deduce that $c_0$ is monochromatic on $R_0$, but instead we can ensure that there exists some subgroup $H\subset \Gamma_0$ so that the quotient coluring $c_0/H$ is highly structured in a certain sense, which we dub as $c_0$ being \emph{$d$--well-behaved}. As it turns out, the correct notion of well-behaved for us will be the existence of some \emph{vertex}-coloring such that the color of any edge $xy$ is controlled by the color of its two endpoint vertices. This is defined formally in \Cref{Sec.algorithm}. 

    Our coloring being $d$--well-behaved on its own will not be enough to find a zero-sum embedding of $G$, but, if we look for a minimal such case (with respect to the size of the subgroup $H\leqslant \Gamma_0$) from the start, then we can ensure that we will always be able to find our desired gadgets. We will then proceed to algorithmically find gadgets one by one, and at every step consider the sumset (in $\Gamma)$ given by our current gadgets. Quotienting out our coloring $c_0$ every time we obtain a larger subgroup, we then carefully find the next gadget. \label{WB_problem}
\end{enumerate}

\section{Algebra basics}\label{sec.algebra}

We now take stock of some of the elementary algebraic tools we will require. First and foremost, we will be making extensive use of the classification theorem of finite abelian groups.

\begin{theorem}[Fundamental theorem of finite abelian groups\cite{dummit2004}]\label{thm:bascigroupthm}
 Let $\Gamma$ be an abelian group of order $n > 1$ and let the unique factorization of $n$ into distinct prime powers be
 $n = p_1^{\alpha_1} p_2^{\alpha_2} \cdots p_k^{\alpha_k}.$
 Then
\begin{enumerate}
    \item $\Gamma \cong A_1 \times A_2 \times \cdots \times A_k$ for groups $A_1, \dots, A_k$ with $|A_i| = p_i^{\alpha_i}$ for each $1\leq i\leq k$, and 
    
    \item for each $A \in \{A_1, A_2, \ldots, A_k\}$ with $|A| = p^{\alpha}$, we have
    \[
    A \cong \mathbb{Z}_{p^{\beta_1}} \times \mathbb{Z}_{p^{\beta_2}} \times \cdots \times \mathbb{Z}_{p^{\beta_\ell}}
    \]
    for some $\beta_1 \geq \beta_2 \geq \cdots \geq \beta_\ell \geq 1$ satisfying
    $\beta_1 + \beta_2 + \cdots + \beta_\ell = \alpha \text{ (where $\ell$ and $\beta_1, \ldots, \beta_\ell$ depend on $i$)}$. 
\end{enumerate}

Moreover, as $\ZZ_{mn} \cong \ZZ_m \times \ZZ_n$ if and only if $m$ and $n$ are coprime, it follows that each finite abelian group also has a unique factorization $\Gamma \cong \ZZ_{m_1} \times  \cdots \times \ZZ_{m_{k'}}$ where $m_i | m_{i + 1}$ for each $1 \le i < k'$. This decomposition is called an \emph{invariant factor decomposition}.
\end{theorem}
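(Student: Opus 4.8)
The plan is to prove the four assertions in turn, the first two by explicit decompositions and the last two by extracting invariants. \emph{Primary decomposition (part~1).} For each $i$ put $A_i=\{g\in\Gamma : p_i^{\alpha_i}g=0\}$, which is a subgroup since $\Gamma$ is abelian. Writing $n_i=n/p_i^{\alpha_i}$ one has $\gcd(n_1,\dots,n_k)=1$, so fix integers $c_i$ with $\sum_i c_in_i=1$. For any $g\in\Gamma$ we have $g=\sum_i c_in_ig$ and $p_i^{\alpha_i}(c_in_ig)=c_i n g=0$, so $c_in_ig\in A_i$; hence $\Gamma=A_1+\dots+A_k$. If $g\in A_j\cap\sum_{i\ne j}A_i$ then $p_j^{\alpha_j}g=0$ and $\bigl(\prod_{i\ne j}p_i^{\alpha_i}\bigr)g=0$, and as these two integers are coprime, $g=0$; thus the sum is direct, and comparing orders forces $|A_i|=p_i^{\alpha_i}$.

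\emph{Decomposition of an abelian $p$-group (part~2).} I would induct on $\alpha$, where $|A|=p^\alpha$. Pick $a\in A$ of maximal order $p^{\beta_1}$. The key lemma is that $\langle a\rangle$ is a direct summand of $A$: choose $B\le A$ maximal subject to $B\cap\langle a\rangle=0$, and show $A=B\oplus\langle a\rangle$ by arguing that if some $g$ lay outside $B+\langle a\rangle$ one could, using the maximality of the order of $a$, adjust $g$ by an element of $\langle a\rangle$ so that $pg\in B$, thereby enlarging $B$ and contradicting its maximality. Then $A\cong\mathbb{Z}_{p^{\beta_1}}\times B$ with $|B|<|A|$, and the induction hypothesis writes $B\cong\mathbb{Z}_{p^{\beta_2}}\times\cdots\times\mathbb{Z}_{p^{\beta_\ell}}$; here $\beta_1\ge\beta_2\ge\cdots\ge\beta_\ell\ge 1$ because $p^{\beta_1}$ was the maximal order in $A$. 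Applying this inside each $A_i$ produced in part~1 yields the full structure statement.

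\emph{Uniqueness, and the invariant factor form.} The primes $p_i$ and the subgroups $A_i$ are intrinsic to $\Gamma$, being recovered as $\{g:p_i^{\alpha_i}g=0\}$. For a fixed prime $p$ with factor $A\cong\prod_t\mathbb{Z}_{p^{\beta_t}}$, the number of parts with $\beta_t\ge j+1$ equals $\dim_{\mathbb{F}_p}\bigl(p^jA/p^{j+1}A\bigr)$ (a computation one checks on a single cyclic factor and then sums), and the left-hand side depends only on the isomorphism type of $A$, so the multiset $\{\beta_t\}$ is determined. For the ``moreover'' part I would regroup the primary factors: for each prime $p$ occurring, list its exponents in non-increasing order and pad with zeros to a common length $k'$, then set $m_j=\prod_p p^{e_{p,\,k'+1-j}}$, so that $m_1\mid m_2\mid\cdots\mid m_{k'}$ (discarding any $m_j=1$). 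By the Chinese Remainder Theorem each $\mathbb{Z}_{m_j}$ splits as the product of the corresponding coprime prime-power cyclic groups, whence $\prod_j\mathbb{Z}_{m_j}\cong\prod_p A_p\cong\Gamma$, and uniqueness of the $m_j$ follows from uniqueness of the primary decomposition.

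The one step that is not pure bookkeeping is the direct-summand lemma in part~2 — that a cyclic subgroup generated by an element of maximal order splits off — so that is where I would expect the real work to lie; everything else reduces to Bézout/Chinese-Remainder manipulations and a dimension count.
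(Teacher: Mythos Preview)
The paper does not prove this theorem at all: it is simply quoted as a classical result with a citation to Dummit and Foote, so there is no ``paper's own proof'' to compare against. Your outline is the standard textbook argument (primary decomposition via B\'ezout, splitting off a maximal-order cyclic summand by a maximality argument, uniqueness via the $\mathbb{F}_p$-dimensions of $p^jA/p^{j+1}A$, and CRT regrouping for the invariant factors), and it is correct in broad strokes. The only place that deserves a little more care is exactly where you flagged it: in the direct-summand lemma, when you pick $g\notin B+\langle a\rangle$ and write $pg=b+ma$ with $b\in B$, you need to argue that $p\mid m$ (this is where maximality of the order of $a$ is actually used, since otherwise $ma$ would have order exceeding $p^{\beta_1}/p$ times something forcing a contradiction), so that $g'=g-(m/p)a$ satisfies $pg'\in B$ and $g'\notin B$, allowing you to enlarge $B$ to $B+\langle g'\rangle$ while keeping trivial intersection with $\langle a\rangle$.
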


The other algebraic result we will need is the following generalization of the Cauchy-Davenport theorem.

\begin{theorem}[Kneser's Theorem \cite{Kneser_1953}]\label{Kneser}
Let $\Gamma$ be a finite abelian group and $A,B\se \Gamma$. Then $|A+B|\geq \min\{|\Gamma|, |A|+|B|-1\}$ or $A+B$ contains a coset of some nontrivial subgroup of $\Gamma$.
\end{theorem}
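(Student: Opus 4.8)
This is the classical Kneser theorem in lightly repackaged form, so the plan is to deduce it from the textbook statement rather than to reprove it from scratch. Recall the classical version: for nonempty finite subsets $A,B$ of an abelian group $\Gamma$, if $H=\{g\in\Gamma:g+(A+B)=A+B\}$ denotes the stabiliser of $A+B$, then $|A+B|\geq |A+H|+|B+H|-|H|$, and in particular $|A+B|\geq |A|+|B|-|H|$. Throughout I read the statement with $A,B\neq\emptyset$, which is the only regime the paper uses (for $B=\emptyset$ and $|A|\geq 2$ the stated inequality fails outright, so some such hypothesis is needed). Granting the classical bound, the stated dichotomy drops out of a short case split on whether $A+B$ is periodic.

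First I would dispose of the case that $A+B$ contains a coset of some nontrivial subgroup of $\Gamma$: then we are already in the second alternative and there is nothing more to prove. So suppose $A+B$ contains no such coset. Then the stabiliser $H$ above must be trivial --- for if $H$ were nontrivial, $A+B$ would be invariant under translation by $H$, hence a nonempty union of cosets of $H$, and so would contain one, contradicting our assumption. With $H=\{0_\Gamma\}$ the classical bound reads $|A+B|\geq |A|+|B|-1$, and combining this with the trivial inequality $|A+B|\leq|\Gamma|$ gives $|A+B|\geq\min\{|\Gamma|,|A|+|B|-1\}$, which is the first alternative. This completes the deduction, and I do not expect any real obstacle here; the only mild subtlety is noticing that the hypothesis ``no coset of a nontrivial subgroup'' is precisely what forces the stabiliser to be trivial.

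If a self-contained argument is preferred, I would instead prove the classical bound directly, by induction (on $|\Gamma|+|B|$, say), using the Dyson ($e$-)transform: for $e\in A-B$, replace the pair $(A,B)$ by $(A\cup(B+e),\, B\cap(A-e))$, an operation which does not enlarge $A+B$, preserves $|A|+|B|$, and cannot decrease $|A|$ (hence cannot increase $|B|$); the base case $|B|=1$ is immediate since then $A+B$ is a translate of $B$. The step I expect to be the real obstacle is the terminal branch, in which no further transform strictly decreases $|B|$: one then argues that $A+B$ has reached a periodic configuration with a nontrivial stabiliser $H_0$, passes to the quotient group $\Gamma/H_0$, applies the inductive hypothesis to the images of $A$ and $B$ there (which are no larger), and lifts the resulting bound back up, carefully tracking all cardinalities modulo $|H_0|$. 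This periodicity-and-quotient bookkeeping is the only genuinely delicate part; the remainder of the induction is routine.
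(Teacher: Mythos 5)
The paper does not prove this theorem; it is cited directly to Kneser's original 1953 paper and used as a black box, so there is no internal proof to compare against. Your first paragraph is a correct deduction of the stated dichotomy from the standard stabiliser form of Kneser's theorem: if $A+B$ contains no coset of a nontrivial subgroup, the stabiliser $H$ of $A+B$ must be trivial (otherwise $A+B$, being a nonempty $H$-periodic set, would contain a coset of $H$), whence $|A+B|\ge |A|+|B|-1$. Your observation that one must implicitly restrict to $A,B\neq\emptyset$ is also correct and worth noting, since the paper's statement omits this hypothesis and the inequality genuinely fails when, say, $B=\emptyset$ and $|A|\ge 2$; in the paper's use the sets arising are always nonempty, so no harm is done. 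The sketch in your final paragraph of a self-contained proof via Dyson transforms is the standard route, and you have correctly identified the periodicity-and-quotient step as where the real work sits, but this goes beyond what the paper itself does or needs.
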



    


\section{Finding blueprints}\label{sec.blueprints}

In this section, we break up the graph $G$ into `blueprint pairs', which we intuitively think of subgraphs of $G$ that will be suitable candidates for what our future gadgets should look like. In the simple case of a path we sketched in \Cref{sec.example}, the blueprint pairs were subpaths of length $2$, and they gave rise to $(2, 2)$--gadgets. We formalize this idea in \Cref{blueprints} and obtain a suitable family of blueprint pairs from the graph $G$ in \Cref{K_partition}.

\begin{defn} \label{blueprints}
    Given a graph $G$, a \emph{blueprint} of type $d$ is an induced subgraph $\Upsilon$ of $G$ of the form $\overline{N}(v)$ for some vertex $v$ of degree $d$, equipped with an identification of the \emph{free vertex} $\iota(\Upsilon)=v$ and the \emph{fixed vertices} $\zeta(\Upsilon)\coloneqq N(\iota(\Upsilon))$. We informally write $\overline{N}(v)$ for a blueprint, always meaning that the free vertex is $v$.
    
    Given two blueprints $\Upsilon_1$ and $\Upsilon_2$ of type $d_1$ and $d_2$ with $m\coloneqq |N(\iota(\Upsilon_1))\cap N(\iota(\Upsilon_2))|$, we define an \emph{blueprint pair} $(\Upsilon_1,\Upsilon_2)$ of type $(d_1,d_2,m)$ on the condition that $\iota(\Upsilon_1)$ and $\iota(\Upsilon_2)$ are adjacent in $G$. The definitions of $\iota$ and $\zeta$ are naturally extended for blueprint pairs to be $\iota(\Upsilon_1,\Upsilon_2) = \{\iota(\Upsilon_1),\iota(\Upsilon_2)\}$ and $\zeta(\Upsilon_1,\Upsilon_2) =(\zeta(\Upsilon_1)\cup \zeta(\Upsilon_2))\setminus \iota(\Upsilon_1,\Upsilon_2)$. Let $V(\Upsilon_1,\Upsilon_2)=V(\Upsilon_1)\cup V(\Upsilon_2)$ be the set of all vertices in the blueprint pair.

    Finally, two blueprint pairs $(\Upsilon_1,\Upsilon_2)$ and $(\Upsilon'_1,\Upsilon'_2)$ are \emph{non-overlapping} if their vertex sets are disjoint.
\end{defn}

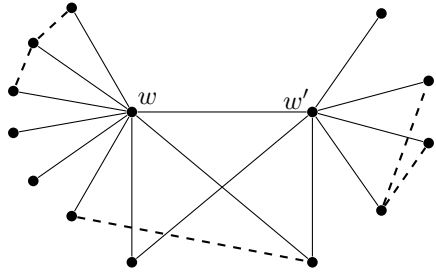
\begin{figure}[htb]
\captionsetup{justification=centering}
\centering
\begin{tikzpicture}[
  v2/.style={fill=black,minimum size=4pt,ellipse,inner sep=1pt},
  node distance=1.5cm,scale=0.8
]

\node[v2] (w1) at (2,0) {};
\node[v2] (w2) at (-1,0) {};

\node[v2] (u1) at ({-1-2*cos(60)}, { 2*sin(60)}) {};
\node[v2] (u2) at ({-1-2*cos(35)}, { 2*sin(35)}) {};
\node[v2] (u3) at ({-1-2*cos(10)}, { 2*sin(10)}) {};
\node[v2] (u4) at ({-1-2*cos(10)}, {-2*sin(10)}) {};
\node[v2] (u5) at ({-1-2*cos(35)}, {-2*sin(35)}) {};
\node[v2] (u6) at ({-1-2*cos(60)}, {-2*sin(60)}) {};

\node[v2] (v1) at ({2+2*cos(55)}, { 2*sin(55)}) {};
\node[v2] (v2) at ({2+2*cos(15)}, { 2*sin(15)}) {};
\node[v2] (v3) at ({2+2*cos(-15)}, {2*sin(-15)}) {};
\node[v2] (v4) at ({2+2*cos(-55)}, {2*sin(-55)}) {};

\node[v2] (z1) at (-1,-2.5) {};
\node[v2] (z2) at (2,-2.5) {};

\draw (w2) -- (u1);
\draw (w2) -- (u2);
\draw (w2) -- (u3);
\draw (w2) -- (u4);
\draw (w2) -- (u5);
\draw (w2) -- (u6);
\draw[dashed,thick] (u1) -- (u2);
\draw[dashed,thick] (u2) -- (u3);
 
 \draw (w1) -- (v1);
\draw (w1) -- (v2);
\draw (w1) -- (v3);
\draw (w1) -- (v4); 

\draw[dashed,thick] (v4) -- (v2);
\draw[dashed,thick] (v4) -- (v3);

 \draw (w1) -- (z1);
\draw (w1) -- (z2);
 \draw (w1) -- (w2);
 \draw (w2) -- (z1);
\draw (w2) -- (z2);

\draw[dashed,thick] (u6) -- (z2);
\node[xshift=-6pt,yshift=5pt] at (w1) {$w'$};
\node[xshift=6pt,yshift=5pt] at (w2) {$w$};

\end{tikzpicture}
   
\caption{An example of a $(9,7,2)$ blueprint pair, an induced subgraph of $G$. The dashed lines represent edges that are present in $G$, but do not affect the behavior of realizations of gadgets (see \Cref{sec.gadgets}).}\label{blueprint_example}
\end{figure}

The first step towards proving \Cref{main} is to extract an appropriate set of pairwise non-overlapping blueprint pairs from $G$ which we will eventually realize as gadgets in $R_0$ (the precise definition of a gadget will be given later).
First, consider an auxiliary coloring $\hat{c}$ of $E(G)$ as follows. To each edge $xy\in E(G)$ we assign the color $\hat{c}(xy)=\gcd (d(x),d(y))$, where $d(x)$ is the degree of $x$ in $G$. Let $\kappa'$ be the most common color under $\hat{c}$ (or one such color chosen arbitrarily, if there are multiple), set
\beq{def:kappa}\kappa \coloneqq  \gcd(n,\kappa') \enq
and let $K'$ be the set of blueprint pairs defined by $$K'=\{(\overline{N}(x),\overline{N}(y)):\hat{c}(xy)=\kappa'\}$$
and note that $|K'|\geq n/\Delta$.

Note that if $g\in \Gamma_0$ satisfies $\kappa'g = 0$, then $\kappa g= 0$.

Now let $K''$ be a maximal pairwise non-overlapping subset of $K'$. For every edge $xy$, there exist at most $2\Delta^3$ edges $x'y'\in E(G)$ such that $(\overline{N}(x),\overline{N}(y))$ and $(\overline{N}(x'), \overline{N}(y'))$ overlap, and thus $|K''|\geq n/(2\Delta^4)$.

We now partition the vertices of $G$ into parts so that the sum of degrees of each part is divisible by $\kappa$. Note that $\sum_{v\in G}d(v) = 2|E(G)|=2n \equiv 0 \pmod \kappa$, by definition of $\kappa$, so there is no divisibility obstruction to this. We start by partitioning $V(G)$ into sets $\Pi_i=\{v\in V(G):d(v)=i\}$ for all $1\leq i\leq \Delta$. Now set $K=\emptyset$. 

Then we do the following until $K''$ is empty:
\begin{enumerate}\label{def:K}
    \item Take some 
$(\Upsilon,\Upsilon')$ from $K''$ and set $K''\gets K''\setminus \{(\Upsilon,\Upsilon')\}$. \label{K_1first}
\item Consider the set of fixed vertices $\zeta=\zeta(\Upsilon,\Upsilon')$, and order its elements in some arbitrary way as $\zeta=\{v_1, \dots, v_{|\zeta|} \}$. For each $1\leq i\leq |\zeta|$ in turn define a vertex set $\pi_{v_i}$ as follows. If $d(v_i) \equiv  0 \pmod \kappa$, then set $\pi_{v_i}=\{v_i\}$; otherwise add $v_i$ together with $\kappa-1$ arbitrary vertices from $\Pi_{d(v_i)}\setminus \left(\zeta\cup \bigcup_{j=1}^{i-1}\pi_{v_j} \right)$ to $\pi_{v_i}$. Notice that in either case $\pi_{v_i}$ does not contain any of the free vertices of the blueprints in $K''$, since by construction all free vertices have degrees divisible by $\kappa$.
\item If for some $1\leq i\leq |\zeta|$ we have $d(v_i)\not\equiv 0 \pmod \kappa$ and $\left|\Pi_{d(v_i)}\setminus\left(\zeta\cup\bigcup_{j=1}^{i-1}\pi_{v_j}\right)\right|<\kappa-1$, then return to step 1. \label{K_13}
\item For every $1\leq i\leq |\zeta|$ in turn, update $\Pi_{d(v_i)}\gets \Pi_{d(v_i)}\setminus\pi_{v_i}$.
\item If any of the added vertices belongs to any other blueprint pairs in $K''$, then remove all such pairs from $K''$. In other words, update $K''\gets K''\setminus \{(\Upsilon_1,\Upsilon_2)\in K'':V(\Upsilon_1,\Upsilon_2)\cap \left(\bigcup_{v\in \zeta}\pi_v\right)\neq \emptyset\}$. \label{K_15}
\item Update $K\gets K\cup \{(\Upsilon,\Upsilon')\}$ and return to step 1.
\end{enumerate}

We now count how many $(\Upsilon,\Upsilon') \in K''$ are discarded in steps 3 and 5 of the above process. In step 3, for each $d\in [\Delta]$ we remove from $K''$ at most $2\Delta^2$ blueprint pairs because of some $v\in\zeta$ with $d(v)=d$. Indeed, if we encounter such a $v$, say $v_i$, then at that iteration (and all subsequent ones) we must have $\left|\Pi_{d}\right|<\kappa|\zeta|\leq 2\Delta^2$. Hence for a given $d \in [\Delta]$ there are at most $2\Delta^2$ choices for such a $v$ and each of them belongs to at most one $\zeta$. This gives at most $2\Delta^3$ blueprint pairs discarded because of step 3.

In step 5, for each blueprint pair added to $K$, we remove from $K''$ at most $|\zeta|(\kappa-1)\leq(d_1+d_2-2)(\kappa-1) < 2\Delta^2$ pairs. Thus, we finish the above process with 

$$|K|\geq \frac{n/(2\Delta^4) - 2\Delta^3}{2\Delta^2} = \frac{n-2\Delta^7}{4\Delta^6}\ge \frac{n}{5\Delta^6}.$$
Throughout the operation of the algorithm in \Cref{Sec.algorithm}, $K$ will be our pool of available blueprint pairs.

With the set $K$ now in hand, define
$$P' \coloneqq  \{\pi_v: v\in \zeta(\Upsilon,\Upsilon') \text{ for some }  (\Upsilon,\Upsilon') \in K\}$$
which will form part of our vertex partition of $G$, and set
\beq{G'def}G'\coloneqq  G\setminus \bigcup_{(\Upsilon,\Upsilon')\in K}\iota(\Upsilon,\Upsilon')\enq
and likewise, for each $d\in [\Delta]$ update $$\Pi_d\gets \Pi_d \setminus \bigcup_{(\Upsilon,\Upsilon')\in K}\iota(\Upsilon,\Upsilon')$$ to remove all the free vertices from each of the $\Pi_d$.

Next, enumerate each $\Pi_d=\{v_{(d,1)},\ldots,v_{(d,l_d)}\}$ and let  $l_d'=\lfloor l_d/\kappa \rfloor$. Now, for each $j\in [l'_d]$ we define $$\pi_{(d,j)}\coloneqq \{v_{(d,(j-1)\kappa +1)},\ldots,v_{(d,j\kappa)}\}.$$ The sets $\pi_{(d, j)}$ will also form part of our vertex partition of $G$.

Finally, consider the set of leftover vertices $\pi'\coloneqq \bigcup_{d\in [\Delta]} \{v_{(d,l'_d\kappa +1)},\ldots,v_{(d,l_d)}\}$. Then let 
\begin{equation}\label{equ:P}
    P\coloneqq \{\pi'\} \cup \{\pi_{(d,j)}:d\in [\Delta], j\in [l'_d]\}.
\end{equation} 
Now, for each set $X\in P$ the sum of degrees $\sum_{v\in X} d(v)$ is divisible by $\kappa$. For $X\neq \pi'$ this is trivial, and for $X=\pi'$ we recall that $P$ is a vertex partition of $G'\setminus\bigcup P'$ and $\sum_{v\in G' \setminus \bigcup P'}d(v) \equiv 0 \pmod \kappa$, so $\sum_{v\in \pi'}d(v)\equiv 0 \pmod \kappa$. Also note that we have $|X|\leq \kappa$ for all $X\in P$ except when $X=\pi'$, in which case $|X|\leq (\kappa-1)\Delta<\Delta^2$.

Then, finally we define our partition of $G'$ to be 
\begin{equation}\label{eq.G'partition}
    \Pi \coloneqq  P' \cup P.
\end{equation}
Now, recall that all the vertices in $G\setminus G'$ are free and therefore have degree divisible by $\kappa$, and so $\Pi$ together with the singletons of all free vertices in $K$ forms the desired vertex partition of $G$.

Recalling the discussion in \cref{multiplicity} in \Cref{sec.example}, we now attempt to briefly motivate our choice of multiplicity at which we will find gadgets, as this is closely connected to the cardinality of the set $K$ of blueprint pairs we obtained in this section. Naively, if each blueprint pair has multiplicity $\beta$ (so that it contributes $\beta$ colors to the count in our adaptation of the Cauchy-Davenport argument), then we require $|K|\cdot (\beta-1) \geq n$, and solving this yields $\beta = 5\Delta^6+1$. However, it turns out that we can't find gadgets at multiplicity $\beta$ throughout our entire (algorithmic) proof, and that some constant fraction of them will need to be at multiplicity 2. As such, for the remainder of the paper we define
\begin{equation}\label{equ:alphabeta}
    \alpha\coloneqq  10\Delta^6 \text{ and } \beta\coloneqq  2\alpha=20\Delta^6.
\end{equation} 
The parameter $\beta$ is the multiplicity of the gadgets we will be looking for in the first few rounds of the algorithm, and $\alpha$ is a measure of how long into the process we use gadgets of multiplicity $\beta$ before switching to finding gadgets of multiplicity 2: we will look for $\beta$-multiplicity gadgets until we have reduced our group $\Gamma_0$ to some $\Gamma_0/H$ of order $|\Gamma_0|/\alpha$, where $H$ is a subgroup of $\Gamma_0$.

 The above process implies the following:
\begin{proposition}\label{K_partition}
     Let $\kappa'$ be the most common element of the multiset $\{\gcd (d(x),d(y)): xy \in E(G)\}$ and let $\kappa = \gcd( \kappa', e(G))$. There exists a collection $K$ of non-overlapping blueprint pairs as defined in \Cref{blueprints} along with an associated partition $\Pi_0$ of $V(G)$ such that the following all hold:
     \begin{enumerate}[label=\upshape{(\roman{enumi})}]
     \item $|K| \ge \frac{e(G)}{5\Delta ^6}$;
     \item every blueprint pair in $K$ has type $(d_1, d_2, m)$ for some $d_1, d_2\leq \Delta$ which satisfy $\gcd(d_1, d_2, e(G))=\kappa$;
     \item for each vertex $v \in V(G)$, there is some $\pi_v\in \Pi_0$ such that $v\in \pi_v$ and $\sum_{u\in \pi_v}d(u)\equiv 0\pmod \kappa$;
     \item if $u$ and $v$ are vertices of some blueprint pairs in $K$, then we have $\pi_u\cap \pi_v = \emptyset$; and
     \item for each $\pi\in \Pi_0$, $|\pi|<\Delta^2$.
     \end{enumerate}
 \end{proposition}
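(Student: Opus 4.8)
The plan is to observe that \Cref{K_partition} simply repackages the construction carried out in \Cref{sec.blueprints}, so the work remaining is a verification. Concretely, I take $K$ to be the family of blueprint pairs output by the six-step extraction algorithm run on a maximal non-overlapping subfamily $K''$ of $K'$, and I set $\Pi_0 \coloneqq \Pi \cup \{\{v\} : v \text{ a free vertex of a pair in } K\}$ with $\Pi = P' \cup P$ as defined there. One fact I will use repeatedly is that every free vertex $v$ of a pair in $K'$ satisfies $\kappa \mid d(v)$, since $d(v)$ is a multiple of $\kappa'$ and $\kappa = \gcd(\kappa', n)$ divides $\kappa'$. Then I check (i)--(v) in turn.

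\emph{Properties (i) and (ii).} For (i) I would point out that it is exactly the running count carried out after the algorithm is described in \Cref{sec.blueprints}: starting from $|K'| \ge n/\Delta$ (the colouring $\hat c$ uses at most $\Delta$ colours), passing to a maximal non-overlapping subfamily $K''$ and then deleting the pairs lost in steps 3 and 5 of the algorithm leaves $|K| \ge n/(5\Delta^6)$. (When $n$ is below a constant depending on $\Delta$ this bound is vacuous or can be absorbed into the constant $C$ of \Cref{main}, since $v(G)$ is then bounded.) Property (ii) is immediate from the definition $K' = \{(\overline{N}(x),\overline{N}(y)) : \hat c(xy) = \kappa'\}$: any pair in $K \subseteq K'$ has type $(d(x), d(y), m)$ with $d(x), d(y) \le \Delta$ and $\gcd(d(x), d(y)) = \kappa'$, hence $\gcd(d(x), d(y), n) = \gcd(\kappa', n) = \kappa$.

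\emph{Properties (iii) and (v).} That $\Pi_0$ is a partition of $V(G)$ is built into the construction: the free vertices of pairs in $K$ are covered by singletons, and $\Pi = P' \cup P$ partitions $V(G')$, with $P'$ covering the fixed vertices of pairs in $K$ together with their padding and $P$ covering the rest. For the divisibility clause: a free-vertex singleton has degree-sum $d(v) \equiv 0 \pmod \kappa$ by the fact above; a class $\pi_v$ of a fixed vertex is either $\{v\}$ with $d(v) \equiv 0 \pmod \kappa$ or consists of exactly $\kappa$ vertices of degree $d(v)$; each $\pi_{(d,j)}$ consists of exactly $\kappa$ vertices of degree $d$; and for $\pi'$ one subtracts the degree-sums of all other classes (each $\equiv 0 \pmod \kappa$) from $\sum_{v \in V(G)} d(v) = 2n \equiv 0 \pmod \kappa$. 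For (v): $\kappa \le \kappa' \le \Delta$, every class other than $\pi'$ has size $1$ or $\kappa$, and $|\pi'| \le \Delta(\kappa - 1) < \Delta^2$ since for each of the at most $\Delta$ degrees $d$ at most $\kappa - 1$ vertices of degree $d$ are left over.

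\emph{Property (iv), the main point.} This is where one has to trace the algorithm carefully, and I expect it to be the only genuinely fiddly step. Since the pairs in $K$ are non-overlapping, each vertex lies in at most one of them and is there either free or fixed. Let $u \ne v$ both be vertices of pairs in $K$. If $u$ is free then $\pi_u = \{u\}$, while $\pi_v$ --- singleton or padded --- contains no free vertex, because padding is drawn from some $\Pi_{d(v)}$ with $d(v) \not\equiv 0 \pmod \kappa$ and such vertices are not free by the fact above; so $\pi_u \cap \pi_v = \emptyset$. If both $u$ and $v$ are fixed, look at the iteration in which the later of $\pi_u, \pi_v$ is formed: by step 4 the earlier class has already been removed from the relevant $\Pi_d$, so the padding of the later class avoids it, and neither $u$ nor $v$ can have been chosen as padding for the other's class, for step 5 would then have deleted the containing pair from $K''$ before it could enter $K$. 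The remaining cases --- two free vertices, or $u$ and $v$ in a common pair (where distinct $\pi_{v_i}$'s of a single pair are formed to avoid one another) --- are immediate. Once this bookkeeping is complete, the proposition follows.
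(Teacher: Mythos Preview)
Your proposal is correct and follows exactly the paper's approach: Proposition~\ref{K_partition} is stated in the paper merely as a summary of the construction in \Cref{sec.blueprints} (``The above process implies the following''), and your verification of (i)--(v) tracks that construction faithfully, including the key bookkeeping for (iv) via steps~4 and~5 of the extraction algorithm. The only minor imprecision is in the phrasing of the free/fixed case split (when you write ``$\pi_v$ \ldots\ contains no free vertex'' you are implicitly assuming $v$ is fixed, which you then patch by handling the two-free-vertices case separately), but the logic is sound.
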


\section{Gadgets and realizations}\label{sec.gadgets}

\Cref{K_partition} provides us with our pool of blueprint pairs and allocates to each of the fixed vertices $v$ a set of vertices $\pi_v$ (which are pairwise disjoint) such that $\sum_{u\in \pi_v} d(u) \equiv0\pmod \kappa$. Equipped with this, we are finally in a position to formally define our notions of a gadget and a realization of a blueprint.  

\begin{defn} \label{def:gadgets}
    Given a finite abelian group $\Gamma$, a vertex set $R$, a graph $\gamma$ with vertex set in $R$, an edge coloring $c: E(R) \to \Gamma$ and a vertex coloring $\cC:R \to \Gamma$, we say $\gamma$ is a $(d_1,d_2,m,\lambda)$--\emph{gadget} if $V(\gamma)$ can be partitioned into subsets $V(\gamma)=D_1 \sqcup D_2 \sqcup M \sqcup X_1\ \sqcup X_2 \sqcup \bigsqcup_{v\in D_1 \cup D_2 \cup M} P_v$ such that $|D_1|=d_1-m-1,$ $|D_2|=d_2-m-1,$ $|M|=m$, and $|X_1|,|X_2|\leq \lambda$ so that
    
    \begin{enumerate}
        \item the subgraph of $\gamma$ induced by each of the pairs $(D_1,X_1),$ $(X_1,X_2),$ $(X_2,D_2)$, $(M,X_1)$, $(M,X_2)$ is a complete bipartite graph;
        \item for all $v\in D_1\cup D_2\cup M$ and $w\in P_v$, we have $\cC(w)=\cC(v)$; and
        \item defining $D_1'\coloneqq D_1\cup M$ and $D_2'\coloneqq  D_2\cup M$, we  have
        \begin{equation}\label{colorful}|\{c(D_1'v_1v_2D_2') +  d_1\cdot \cC(v_1) + d_2\cdot \cC(v_2):v_1\in X_1,v_2\in X_2\}|\geq \lambda.
    \end{equation}
    \end{enumerate}
\end{defn}

\begin{figure}[htb]
\captionsetup{justification=centering}
\centering
        \begin{tikzpicture}[scale=1.3]

        \draw[] (-2.8,0) circle(0.5) node {$D_1$};
        \fill[] (-2.5, -.2) circle(0.05) node[above]{};
        \fill[] (-2.8, 0.4) circle(0.05) node[above]{};
        \fill[] (-2.5, .2) circle(0.05) node[above]{};
        \fill[] (-2.8, -.4) circle(0.05) node[above]{};
        \fill[] (-3.1, .2) circle(0.05) node[above]{};
        \fill[] (-3.1, -.2) circle(0.05) node[above]{};
 
        \node[] at (-1,-.3) {$X_1$};
        \draw[] (-1,0) circle(0.75);
        \fill[] (-1, .2) circle(0.05) node[above] {$w_1$};
        \fill[] (-1.4, 0) circle(0.05) node[above] {$w_2$};
        \fill[] (-.6, 0) circle(0.05) node[above] {$w_3$};

        \node[] at (1,-.3) {$X_2$};
        \draw[] (1,0) circle(0.75);
        \fill[] (1, .2) circle(0.05) node[above] {$w'_1$};
        \fill[] (1.4, 0) circle(0.05) node[above] {$w'_2$};
        \fill[] (.6, 0) circle(0.05) node[above] {$w'_3$};

        \draw[] (2.8,0) circle(0.5) node {$D_2$};
        \fill[] (2.5, -.2) circle(0.05) node[above]{};
        \fill[] (2.5, .2) circle(0.05) node[above]{};
        \fill[] (3.1, .2) circle(0.05) node[above]{};
        \fill[] (3.1, -.2) circle(0.05) node[above]{};

        \fill[] (-.25, -1.3) circle(0.05) node[above]{};
        \fill[] (.25, -1.3) circle(0.05) node[above]{};
        \draw[] (0,-1.3) circle(0.4) node {$M$};

        \draw[thick] (2.64, 0.47) -- (1.75, 0);
        \draw[thick] (2.64, -0.47) -- (1.75, 0);

        \draw[thick] (-2.64, 0.47) -- (-1.75, 0);
        \draw[thick] (-2.64, -0.47) -- (-1.75, 0);

        \draw[thick] (0, -.9) -- (-1, -.75);
        \draw[thick] (-.4, -1.3) -- (-1, -.75);

        \draw[thick] (0, -.9) -- (1, -.75);
        \draw[thick] (.4, -1.3) -- (1, -.75);

        \draw[dashed] (.25, 0) -- (-.25, 0);
        \draw[dashed] (.35, .4) -- (-.35, .4);
        \draw[dashed] (.35, -.4) -- (-.35, -.4);
        \draw[dashed] (.35, .4) -- (-.35, -.4);
        \draw[dashed] (.35, -.4) -- (-.35, .4);
        
        \end{tikzpicture}
\caption{Example of a $(9,7,2,3)$ gadget (the sets $P_v$ are not shown). In comparison to Figure \ref{simple_gadget}, instead of paths from $v$ to $u$ we have $6$ edges from $D_1$ to each $w_i$, $4$ edges from $D_2$ to each $w_j'$, 2 edges from $M$ to each $w_i$ and $w_j'$, and every edge $w_iw_j'$. This gadget corresponds to the blueprint in \Cref{blueprint_example}.}
\label{dd'_gadget}
\end{figure}
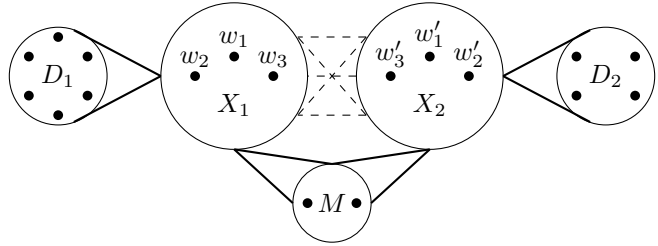

If $|X_2|=1$, then note that the choice of $D_2$ does not affect \cref{colorful}. Further, since the sole purpose of $M$ is to allow the neighborhoods of $X_1$ and $X_2$ to intersect (this will correspond to triangles containing both of the free vertices in the blueprint pair), we can set $D_2=\emptyset$ and refer to the resulting gadget as a $(d_1,\lambda)$--gadget, and regard the relevant vertex sets as $D_1\cup M\cup X_1$ and $\emptyset$ instead of $D_1$ and $M$, respectively (see Figure \ref{fig:(d,l)-gadget}).

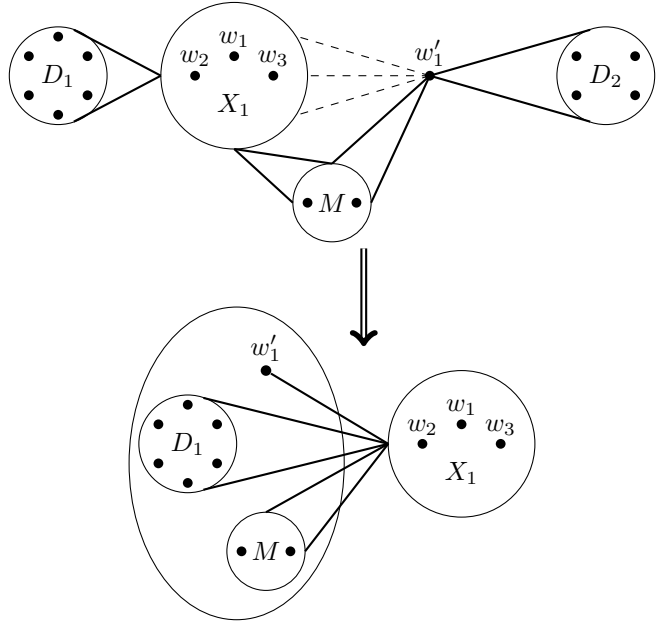
\begin{figure}[htb]
\captionsetup{justification=centering}
\centering
        \begin{tikzpicture}[scale=1.3]

        \draw[] (-2.8,0) circle(0.5) node {$D_1$};
        \fill[] (-2.5, -.2) circle(0.05) node[above]{};
        \fill[] (-2.8, 0.4) circle(0.05) node[above]{};
        \fill[] (-2.5, .2) circle(0.05) node[above]{};
        \fill[] (-2.8, -.4) circle(0.05) node[above]{};
        \fill[] (-3.1, .2) circle(0.05) node[above]{};
        \fill[] (-3.1, -.2) circle(0.05) node[above]{};
 
        \node[] at (-1,-.3) {$X_1$};
        \draw[] (-1,0) circle(0.75);
        \fill[] (-1, .2) circle(0.05) node[above] {$w_1$};
        \fill[] (-1.4, 0) circle(0.05) node[above] {$w_2$};
        \fill[] (-.6, 0) circle(0.05) node[above] {$w_3$};

        \fill[] (1, 0) circle(0.05) node[above] {$w'_1$};

        \draw[] (2.8,0) circle(0.5) node {$D_2$};
        \fill[] (2.5, -.2) circle(0.05) node[above]{};
        \fill[] (2.5, .2) circle(0.05) node[above]{};
        \fill[] (3.1, .2) circle(0.05) node[above]{};
        \fill[] (3.1, -.2) circle(0.05) node[above]{};

        \fill[] (-.25, -1.3) circle(0.05) node[above]{};
        \fill[] (.25, -1.3) circle(0.05) node[above]{};
        \draw[] (0,-1.3) circle(0.4) node {$M$};

        \draw[thick] (2.64, 0.47) -- (1, 0);
        \draw[thick] (2.64, -0.47) -- (1, 0);

        \draw[thick] (-2.64, 0.47) -- (-1.75, 0);
        \draw[thick] (-2.64, -0.47) -- (-1.75, 0);

        \draw[thick] (0, -.9) -- (-1, -.75);
        \draw[thick] (-.4, -1.3) -- (-1, -.75);

        \draw[thick] (0, -.9) -- (1, 0);
        \draw[thick] (.4, -1.3) -- (1,0);

        \draw[dashed] (1, 0) -- (-.25, 0);
        \draw[dashed] (1, 0) -- (-.35, .4);
        \draw[dashed] (1, 0) -- (-.35, -.4);

        \end{tikzpicture}

        \begin{tikzpicture}[
        v2/.style={fill=black,minimum size=4pt,ellipse,inner sep=1pt},
        node distance=1.5cm,scale=1.3]

        \draw[->,thick, double distance = 1] (-1,2) -- (-1,1);
        \draw[] (-2.8,0) circle(0.5) node {$D_1$};
        \fill[] (-2.5, -.2) circle(0.05) node[above]{};
        \fill[] (-2.8, 0.4) circle(0.05) node[above]{};
        \fill[] (-2.5, .2) circle(0.05) node[above]{};
        \fill[] (-2.8, -.4) circle(0.05) node[above]{};
        \fill[] (-3.1, .2) circle(0.05) node[above]{};
        \fill[] (-3.1, -.2) circle(0.05) node[above]{};
 
        \node[] at (0,-.3) {$X_1$};
        \draw[] (0,0) circle(0.75);
        \fill[] (0, .2) circle(0.05) node[above] {$w_1$};
        \fill[] (-.4, 0) circle(0.05) node[above] {$w_2$};
        \fill[] (.4, 0) circle(0.05) node[above] {$w_3$};

        \node[v2] (w1) at (-2,.75) {};
        \node[above] at (w1)  {$w'_1$};

        \fill[] (-2.25, -1.1) circle(0.05) node[above]{};
        \fill[] (-1.75, -1.1) circle(0.05) node[above]{};
        \draw[] (-2,-1.1) circle(0.4) node {$M$};

        \draw (-2.3,-.20) ellipse [x radius=1.1, y radius=1.6];

        \draw[thick] (w1) -- (-.75, 0);

        \draw[thick] (-2.64, 0.47) -- (-.75, 0);
        \draw[thick] (-2.64, -0.47) -- (-.75, 0);

        \draw[thick] (-2, -.7) -- (-.75, 0);
        \draw[thick] (-1.6, -1.1) -- (-.75,0);

        \end{tikzpicture}
\caption{Example of a $(9,7,2,3)$--gadget (the sets $P_v$ are not shown) with $|X_2|=1$ and its reformulation as a $(9,3)$--gadget.}
\label{fig:(d,l)-gadget}
\end{figure}
    In what follows, we will only be interested in $(d_1,\lambda)$-- and $(d_1,d_2,m,\lambda)$--gadgets where $\gcd\{d_1,d_2\}=\kappa'$, with $\kappa'$ as defined in \cref{def:kappa}.

\begin{defn}\label{def:realization}
    Given a vertex set $R$, an edge coloring $c$ of $E(R)$, and a vertex coloring $\cC$ of $R$, we say that a blueprint pair $(\Upsilon_1,\Upsilon_2)$ of type $(d_1,d_2,m)$ is \emph{realized} (or found) at \emph{multiplicity} $\lambda$ if we can find a $(d_1,\lambda)$--, $(d_2,\lambda)$--, or $(d_1,d_2,m,\lambda)$--gadget $\gamma$ in $R$ with $V(\gamma)=D_1\cup X_1 \cup X_2 \cup D_2 \cup M \cup \left(\bigcup_{v\in D_1\cup D_2\cup M} P_v\right)$, and a family $F$ of functions from $V_F=V(\Upsilon_1,\Upsilon_2)\cup \left(\bigcup_{u\in\zeta(\Upsilon_1,\Upsilon_2)}\pi_{u}\right)$ to $V(\gamma)$, such that the following all hold: 
    
    \begin{enumerate}
        \item (fixed bundles are the correct size) for all $u\in \zeta(\Upsilon_1,\Upsilon_2)$ and $f\in F$, we have $|P_{f(u)}|=|\pi_u|-1$;\label{Bundles_correct}
        \item (fixed vertices are constant and distinct) for all $f,f'\in F$, if $u,u'\in  \zeta(\Upsilon_1, \Upsilon_2)$, then $f(u)=f'(u')$ if and only if $u=u'$;
        \item (fixed vertices map to the correct set) for all $f\in F$, if $u\in \zeta(\Upsilon_1, \Upsilon_2)\setminus \zeta(\Upsilon_2)$ then $f(u)\in D_1$; if $u\in \zeta(\Upsilon_1, \Upsilon_2)\setminus \zeta(\Upsilon_1)$ then $f(u)\in D_2$; and if $u\in \zeta(\Upsilon_1)\cap \zeta(\Upsilon_2)$ then $f(u)\in M$.
        \item (fixed bundles are constant and distinct) for all $f,f'\in F$, $u\in \zeta(\Upsilon_1,\Upsilon_2)$ and $u',u''\in \pi_u\setminus \{u\}$, we have $f(u')=f'(u'')$ if and only if $u'=u''$; also, $f(u')\in P_{f(u)}$;
        \item (all pairs of free vertices have an embedding) for all $w_1\in X_1$ and $w_2\in X_2$, there is some $f\in F$ such that $f(\iota(\Upsilon_1))=w_1$ and $f(\iota(\Upsilon_2))=w_2$; and
        \item (free vertices map to free vertices) for all $f\in F$, we have $f(\iota(\Upsilon_1))\in X_1$ and $f(\iota(\Upsilon_2))\in X_2$.
    \end{enumerate}

     We call such an $F$ a \emph{realization} of $(\Upsilon_1, \Upsilon_2)$ with associated free vertices $\iota(F)=\iota(\Upsilon_1,\Upsilon_2)$.

     \begin{defn}
         We further extend the notion of a realization as follows. First, if $F=\{f\}$ where $f$ is a bijection from a subset $V_F\subset V(G)$ to some subset of $R_0$, then we also call $F$ a realization (but not of any particular blueprint pair) with $\iota(F)=\emptyset$. Next, given two realizations $F,F'$ with $V_F\cap V_{F'}=\emptyset$ we recursively define the realization $F\oplus F'$ with $\iota(F\oplus F')\coloneqq \iota(F)\cup \iota(F')$ as
    $$F \oplus F' \coloneqq  \{f\cup f' : f\in F, f'\in F'\}, $$
where if $f:V_F\to R$ and $f':V_{F'}\to R'$, then $f\cup f'$ is the function $\hat{f}:(V_F\cup V_{F'}) \to (R\cup R')$ defined by $$\hat{f}(x) = \begin{cases}
        f(x) & x \in V_F ,\\
        f'(x) & x \in V_{F'}.
    \end{cases}$$
    We also define $F\oplus \emptyset=F$ for all realizations $F$.
    
    Note that in our proof we will only be realizing a collection of pairwise non-overlapping blueprint pairs, and therefore $F\oplus F'$ will always be well-defined for realizations $F,F'$ of distinct pairs.
     \end{defn}

    Our algorithm works by realizing blueprint pairs and then chaining them together via the operation $\oplus$. Then, once we've generated some $F=F_1\oplus...\oplus F_k$ such that $c(F)$ is our desired subgroup (as we explain in \Cref{Sec.algorithm}, this will differ from $\Gamma_0$), we will extend $F$ by some bijection $f$ on the remaining vertices of $V(G)$ so that $F\oplus \{f\}:V(G)\to R_0$. Intuitively, we think of $\{f\}$ as a realization with multiplicity $\lambda=1$, so that free vertices have the same properties as fixed vertices.

\end{defn}

\begin{defn}\label{def.func_col}
    Given a group $\Gamma$, a vertex set $R$, an edge coloring $c : E(R) \to \Gamma$, a vertex coloring $\cC : R \to \Gamma$, a realization $F$, and some $h\in F$ (so that $h:V_F\to R$, with $V_F\subseteq V(G)$), define
    $$
   c(h)\coloneqq \sum_{u\in \iota(F)}d(u)\cdot \cC(h(u))+\sum_{\substack{x,y\in V_F \\ xy\in E(G)}}c(h(x)h(y))$$
   where the summation above is over unordered pairs of vertices $(x, y)$, and set
   \[c(F) \coloneqq \{c(f):f\in F\}.\]
Note that since fixed vertices (and therefore edges between them) are constant on $F$, for $F$ a realization of a blueprint pair $(\Upsilon,\Upsilon')$ at multiplicity $\lambda$, by \cref{colorful} we have
\[|c(F)|\geq \lambda.\]
\end{defn}

\begin{remark}\label{oplus}
     It is important to note that for two realizations $F,F'$ disjoint in their domains, our definition of $c(f\cup f')$ in general differs from $c(f)+c(f')$. However, notice that for any vertices $x\in V_F, y\in V_{F'}$ with $xy\in E(G)$, all of $F$ must be constant on $x$ and $F'$ must be constant on $y$ (it is impossible for a free vertex to have a neighbor that isn't in the same realization). Thus, there is some constant $C_{F,F'}$ such that $c(f)+c(f')+C_{F,F'} = c(f\cup f')$ for all $f\in F$ and $f\in F'$, and so $c(F\oplus F') = c(F) + c(F') + C_{F,F'}.$
\end{remark}

\section{The algorithm}\label{Sec.algorithm}
With the set of gadgets $K$ guaranteed by \Cref{K_partition}, in this section, we show how to find realizations of them one by one until we find a zero-sum copy of $G$ in $R_0$ under the coloring $c_0$. To do this, in \cref{sec:wllbehaved}, we provide the definition of $\kappa$-well-behaved colorings of subsets of $R_0$ (as we alluded to in \Cref{sec.example}) and equip these colorings with a suitable notion of size. In \Cref{phase0}, we prepare the parameters of the algorithm. We start with a $\kappa$-well-behaved tuple of minimal size, which allows us to define a new coloring $c$ on a subset of $R_0$. This will be the coloring that we use in the rest of the proof. 

Once we have finished these preparatory steps, in \Cref{sec:algo} we provide our algorithm for finding gadgets and updating the vertex set and coloring we are left with. In \Cref{embG}, we then verify that if the algorithm runs until the end, then it does indeed give a zero-sum copy of $G$. We will then verify the remaining assumptions about our algorithm in \Cref{gadget_counts,Sec.vertexcounts,sec.nofail}, which will complete the proof of our theorem.

\subsection{Well-behaved colorings}\label{sec:wllbehaved}

Now armed with all the necessary notions of blueprints, gadgets, and realizations, we are finally in a position to formally define what it means for a vertex subset $R\subset R_0$ to be $\kappa$--well-behaved, as we promised in \Cref{sec.example}.

\begin{defn}\label{well-behaved}
 For a vertex set $R$ and subgroup $\Gamma\leqslant\Gamma_0$, we say the pair $(R, \Gamma)$ is \textbf{$\kappa$--well-behaved} if there exist an $s\in \Gamma_0$, a subset $T\subset \Gamma_0$, and a vertex coloring $\mathcal{C}:R\to T$ satisfying:

    \begin{enumerate}[(i)]
        \item \label{C_correct}   $c_0(xy)+\Gamma= s + \mathcal{C}(x)+\mathcal{C}(y) + \Gamma$ for all $xy \in E(R_0)$;
        \item \label{T_order} $\kappa t\in \Gamma$ for all $t\in T$; and
        \item \label{R_size} $|R|\ge\begin{cases}
            |R_0|- 14\Delta^3(n-|\Gamma|) & \text{if } |\Gamma|\ge n/\alpha, \\ |R_0|/(\alpha\beta^{2\Delta})- 14\Delta^3(n-|\Gamma|) - 6\Delta^2(n/|\Gamma|)  & \text{if } |\Gamma|<n/\alpha.
        \end{cases}$

    \end{enumerate}

    In this case we say that $(R,\Gamma,T,\cC,s)$ forms a $\kappa$--well-behaved tuple of size $\sigma(R,\Gamma,T,\cC,s)\coloneqq |\Gamma|$ (as shown in \Cref{fig:k-well-behaved}). 
\end{defn}

\begin{figure}[htbp]
\captionsetup{justification=centering}
  \begin{center}
  \begin{tikzpicture}[
  v2/.style={fill=black,minimum size=4pt,ellipse,inner sep=1pt},
  node distance=1.5cm,scale=0.48]
  \node[v2] (v1) at (4.2,  -2) {};  
    \node[v2] (v2) at (5.8, -2) {};  
    \node[v2] (v3) at (5, -6) {}; 
  \draw (0, 0)--(10, 0);
  \draw (0, 0)--(0, -8);
  \draw (10, -8)--(10, 0);
  \draw (0, -8)--(10, -8);
   \draw (5,-4) circle [radius=3.7cm];
   \draw (5,-2) circle [radius=1.6cm];
   \draw (5,-6) circle [radius=1.1cm];
  \draw (v1)--(v2);  
\draw (v1)--(v3);   
\draw (v2)--(v3);
\node[font=\scriptsize,left] at (v1) {$t_1$};
\node[font=\scriptsize,xshift=6pt,yshift=-5pt] at (v1) {$u$};
\node[font=\scriptsize,right] at (v2) {$t_1$};  
\node[font=\scriptsize,xshift=-6pt,yshift=-5pt] at (v2) {$v$};
\node[font=\scriptsize,below] at (v3) {$t_2$}; 
\node[font=\scriptsize,right] at (v3) {$w$};
\node[font=\scriptsize,above] at (5,-2) {$s+2t_1$}; 
\node[font=\scriptsize,left] at (4.5,-4) {$s+t_1+t_2$}; 
\node[font=\scriptsize,right] at (5.5,-4) {$s+t_1+t_2$}; 
\node[font=\scriptsize,left] at (10,-7.5) {$R_0$}; 
\node[font=\scriptsize,left] at (8,-6) {$R$}; 
\end{tikzpicture}
\end{center}
\caption{An example of a $\kappa$--well-behaved vertex set, where the labels of vertices represent their color under $\cC$ and labels of edges represent  $s+\cC(x)+\cC(y)$ for $x,y\in \{u,v,w\}$.}\label{fig:k-well-behaved}
\end{figure}
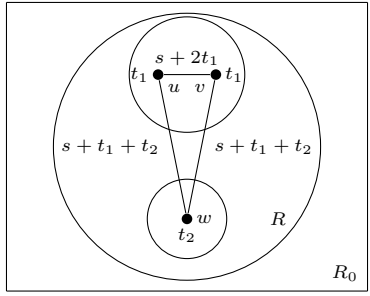

\begin{remark}\label{T_size}
    Observe that in the above definition we can always assume that $|T|\cdot |\Gamma|\leq n$. Indeed, for all $x,y$ such that $\cC(x)+\Gamma=\cC(y)+\Gamma$ we can assume that $\cC(x)=\cC(y)$ since condition \ref{C_correct} will remain satisfied.
\end{remark}

Let us briefly elucidate on \cref{R_size}. Intuitively, we want to ensure that our remaining pool of vertices is large enough, but we also need to take into account that the pool of vertices shrinks as we go along the process. Our algorithm will work by systematically shrinking the group we work with until we reach the trivial group $\{0\}$.
Hence, a tuple with a smaller $|\Gamma|$ allows us to jump further ahead in the process so we want to reward this by allowing it to lose more vertices. In line with this, the terms in \cref{R_size} are representative of potential sources of vertex loss in the process: the jump from $|\Gamma|\ge n/\alpha$ to $|\Gamma|<n/\alpha$ may divide the vertex pool by $\alpha\beta^{2\Delta}$, for each gadget we extract we lose an order of $\Delta^3$ vertices, and for each $t\in T$ we have to regularly throw away any class $\cC_t\coloneqq \{x\in R:\cC(x)=t\}$ if it becomes too small to work with; our threshold will be $2\Delta^2$.

With this definition, we can give a general overview of our algorithm. It is divided into three phases. First, we begin by pointing out that we can assume there is no monochromatic copy of $G$ in $E(R_0)$. This fact will become useful in Phase 1. 

In Phase 0, we complete the setup. We look for a $\kappa$--well-behaved vertex set of minimum size $\sigma$. Then, depending on $\sigma(R,\Gamma,T,\cC,s)$, we will either go to Phase 1 or straight to Phase 2. In both cases, the nature of $\cC$ will inform how we look for gadgets.

In Phase 1, we begin by taking the largest class of vertices $\cC_i\coloneqq \{x\in R:\cC(x)=i\}$ and setting that to be our new set of vertices (updating the coloring appropriately), thereby returning us to a state the looks a lot like the trivial well-behaved case (but losing up to a constant factor on the size of $R$). Then, we will realize blueprint pairs from $K$ at multiplicity $\beta$ one by one until we have generated a coset $a+H$ of $\Gamma$. We will then preform an operation that will reduce our coloring to one on $\Gamma/H$, and proceed to realize blueprint pairs with this coloring. This will repeat until we reach a group $\Gamma/H'$ with size smaller than $|\Gamma_0|/\alpha$, at which point we move on to Phase 2.

In Phase 2, much like in Phase 1, we will realize blueprint pairs from $K$, but now it will be at multiplicity $2$. The rest is exactly the same as Phase 1: we find a coset, quotient our operating group by it, and proceed, until we have reached the trivial group.

Then, to finish, we will take our set of functions $F$ and extend them uniformly so that each embeds a copy of $G$ into $R_0$. We will aim to conclude that for each $g\in \Gamma$ there will be some function $f_g\in F$ such that $\sum_{xy\in E(G)} c_0(f_g(x)f_g(y))=g$, and thus we will have an $f_0$ that witnesses a zero-sum copy of $G$ in $R_0$.

\subsection{Phase 0}\label{phase0}

As described in the previous section, Phase 0 is quite simple. Let $(R,\Gamma,T,\cC,s)$ be a $\kappa$--well-behaved tuple of minimum size $\sigma$.  Note that $(R_0,\Gamma_0,\{0_{\Gamma_0}\},0_{id},0_{\Gamma_0})$ is a $\kappa$--well-behaved tuple of size $n$, and so the minimum is well-defined. Let $\cC_i\coloneqq \{x\in R:\cC(x)=i\}$ for each $i\in T$. Observe that for any $r\in T$, $(R,\Gamma,T-r,\cC-r,s+2r)$ is also $\kappa$--well-behaved. Hence, we may assume that the largest such $\cC_i$ is $\cC_0=\cC_{0_{\Gamma_0}}$.

If $|\Gamma|\geq n/\alpha$, then we begin by defining $R_1\coloneqq \cC_0$. Then on $R_1$, we have $c_0(xy)+\Gamma= s'+\Gamma$ for all $x, y$, and so we define a new coloring \beq{newcol}c(xy)\coloneqq c_0(xy)-s = c_0(xy)-s-\cC(x)-\cC(y) \in \Gamma\enq 
for all $x, y\in R$.
Then, run \nameref{sub1} (described later) with input $(R_1,\Gamma,c,\cC)$. Observe that $\cC$ is by definition identically $0_{\Gamma_0}$ on $R_1$.

On the other hand, if $|\Gamma|<n/\alpha$, then we again define $c(xy)\coloneqq  c_0(xy)-s-\cC(x)-\cC(y)\in\Gamma$ like before, but now set $$R_1=\bigcup_{i\in T: |\cC_i|\geq 2\Delta^2}\cC_i$$ and run \nameref{sub1} with input $(R_1,\Gamma,c,\cC)$.

We denote the two cases when $|\Gamma|\geq n/\alpha$ and $|\Gamma|< n/\alpha$ by Phase 1 and Phase 2 respectively. In what follows, the only difference is that in Phase 1 we look for realizations at multiplicity $\beta$ and in Phase 2 we look for them at multiplicity 2 instead.

\begin{remark}\label{R'R_size}
    We note here the difference in size between $R$ and $R_1$ for the two cases:
    \begin{enumerate}
        \item For Phase 1: $|R_1|>|R|/|T|\geq |R|/\alpha$.
        \item For Phase 2: $|R_1|>|R|-2\Delta^2|T| \geq |R| - 2\Delta^2 n.$
    \end{enumerate}
\end{remark}
\subsection{The main algorithm}\label{sec:algo}

Given an abelian group $\Gamma$, a subgroup $H'$ of $\Gamma$ and a subgroup $H\subset \Gamma/H'$ such that $H=\{a_1+H',a_2+H',\ldots,a_k+H'\}$ for some $a_1,\ldots,a_k\in \Gamma$, define 
\[\psi(H,H')\coloneqq \{a_i+h:i\leq k\text{ and } h\in H'\},\]
and note that this is independent of the choice of coset representatives $a_i$ and that $\psi(H, H')\leqslant\Gamma$.

Recalling from \Cref{sec.blueprints} the definitions of $\alpha$, $\beta$ (\cref{equ:alphabeta}), $G'$ (\cref{G'def}), $P$ (\cref{equ:P}),  and $K$ (\Cref{K_partition}), we now introduce our algorithms. \nameref{sub1} takes as input a tuple $(R, \Gamma, c, \cC)$ consisting of a set of vertices, a subgroup of $\Gamma_0$, an edge-coloring $E(R)\to \Gamma$ and a vertex coloring $R\to \Gamma_0$, respectively, and returns a realization $F$ with $V_F=V(G)$. \nameref{Amain} takes as input a tuple $(R, \Gamma, c, \cC, \lambda, K)$ with $R, \Gamma, c, \cC$ as before, $\lambda\geq 2$ an integer and $K$ a set of (pairwise non-overlapping) blueprint pairs, and returns a tuple consisting of a coset of some subgroup $H\leqslant \Gamma$, the subgroup itself, a subset $R'\subset R$ of unused vertices, a partial map from a family of blueprint pairs in $K$ to the pool of vertices, and a set $K'\subset K$ of unused blueprint pairs.

\begin{algorithm}[H]
\caption{Embedding-Algorithm
}\label{sub1}
\begin{algorithmic}[H]
\State \textbf{Input:} $[R,\Gamma,c,\cC]$ \Comment{The pool of vertices, group, edge coloring, and vertex coloring.}  
\State \textbf{Initialize } $i = 1$, $R_1 = R$, $\Gamma_1 = \Gamma$, $m_1=|\Gamma|$, $c_1=c$, $H'_1 = \{0_\Gamma\}$, $K_1=K$.

\While{$m_i>1$}
\If{$m_i\geq \frac{n}{\alpha}$} 
    \State $\lambda=\beta$ \Comment Phase 1
\Else
    \State $\lambda=2$ \Comment Phase 2
\EndIf    
\State $(A_i,H_i,R_{i+1},F_i,K_{i+1})\gets$ \nameref{Amain} ($R_i,\Gamma_i,c_i,\cC,\lambda,K_i$)
\Comment{A tuple: the subgroup coset we found, the subgroup, the remaining vertices, a partial map from $G'$ to $R_i$, and the remainder of $K$}
    \State $H_{i+1}' \gets \psi(H_i,H_i')$
    \State $\Gamma_{i+1} \gets \Gamma/H_{i+1}',$ \quad $m_{i+1} \gets |\Gamma_{i+1}|,$ \quad $c_{i+1}\gets c_1+H_{i+1}'$ 
    \State $i\gets i+1$
\EndWhile
\State $\overline R \gets R_i$
\State Let $\overline{\Pi} = P \cup \left(\bigcup_{(\Upsilon,\Upsilon')\in K_{i}}(\{\iota(\Upsilon,\Upsilon')\}\cup \{\pi_v:v\in \zeta(\Upsilon,\Upsilon')\})\right)$\Comment{We want to now embed the rest of the vertices: those not assigned to blueprints, and those in blueprints that we didn't end up using}
\For{each set $\pi\in \overline\Pi$}
\State For each $u\in \overline R$ in turn, define $\cC_u\coloneqq \{y\in \overline R:\cC(y)=\cC(u)\}$
\State Take any $w \in \overline R$ such that $|\cC_w|\geq |\pi|$
\State Take any $\hat{\pi}\subset \cC_w$ of size $|\pi|$ and any bijection $f'_\pi: \pi\to \hat{\pi}$, and set $F'_\pi = \{f'_\pi\}$
\State $\overline R \gets \overline R\setminus \hat{\pi}$
\EndFor 
\State \textbf{Define} $F_{i+1}=\bigoplus_{\pi\in \overline\Pi}F'_\pi$ 

\Comment{Now every vertex of $G$ that doesn't appear in a used blueprint pair belongs to the domain of $F_{i+1}$}

\State Let $F=\bigoplus_{j\leq i+1} F_j$
\Comment{$F$ consists of injective mappings from $V(G)$ to $R$}
\State \Return $F$

\end{algorithmic}
\end{algorithm}
\begin{algorithm}[H]\caption{Realization-Finder}\label{Amain}
\begin{algorithmic} 
\State \textbf{Input:} [$R,\Gamma,c,\cC,\lambda, K$]
\Comment{The vertex set, group, edge coloring, vertex coloring, prescribed multiplicity for the gadgets, and current pool of blueprint pairs.}
\State \textbf{Define}: \quad $F\gets \emptyset,$ \quad $i=1$  \Comment{$F$ is the realization we've found}
\While{$c(F)$ does not contain a coset $A$ of some nontrivial subgroup $H\leqslant\Gamma$}
\State Take any $(\Upsilon_i,\Upsilon_i')\in K$ and set $K\gets K\setminus\{(\Upsilon_i,\Upsilon_i')\}$
\State\textbf{Find} a realization of
$(\Upsilon_i,\Upsilon_i')$ at multiplicity $\lambda$ in $R$ with edge-coloring $c$ and vertex-coloring $\cC$.
\State Let the realization be $F'_i$ with gadget $\gamma_i$
\State $F \gets F\oplus F'_i,$ \quad $R \gets R \setminus V(\gamma_i)$
\State Let $L=\{v\in R: |\{w\in R: \cC(w)=\cC(v)\}|<2\Delta^2\}$
\State $R \gets R\setminus L$ \Comment We remove all the $\cC_i$ that have gotten too small.
\State $i\gets i+1$
\EndWhile
\State Let $A$ be an arbitrary coset of some nontrivial subgroup of largest size $H\leqslant \Gamma$ contained in $c(F)$
\State \Return ($A,H,R,F,K$)

\end{algorithmic}
\end{algorithm}

\subsection{Finding a zero-sum embedding of $G$}\label{embG}

Now we  show that if \nameref{sub1} terminates successfully, then there is  a zero-sum copy of $G$ in $E(R_0)$.

\begin{theorem}
    If \nameref{sub1} running with the input described in \Cref{phase0} successfully returns a family of functions $F$, then there is some $f_0\in F$ that witnesses a zero-sum embedding of $G$ into $E(R_0)$ under coloring $c_0$. 
\end{theorem}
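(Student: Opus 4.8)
The plan is to unwind the output of \nameref{sub1} by tracking how the set $c(F)$ evolves, and to use the algebraic lemmas from \Cref{sec.algebra} to show that the collection of values realized by functions in $F$ is all of $\Gamma$ (hence contains $0$). First I would set up the bookkeeping: let $F_1,\dots,F_k$ be the realizations produced inside the successive calls to \nameref{Amain}, so that at the end of call number $j$ the algorithm has found a coset $A_j$ of a subgroup $H_j\leqslant\Gamma_j=\Gamma/H_j'$ contained in $c(F_1\oplus\dots\oplus F_{k_j})$ for the appropriate prefix. By \Cref{Amain} the while-loop only exits once $c(F)$ (with respect to the current edge-colouring $c_i$, i.e. modulo $H_i'$) contains such a coset, so each call genuinely produces a coset of a nontrivial subgroup of $\Gamma_i$; this is exactly the data fed into the next round. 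The key structural observation, which I would phrase as a lemma, is that if $c(F)$ contains a coset $r+H$ of $\Gamma$, and $F'$ is a realization whose image set $c(F')$ surjects onto $\Gamma/H$ (equivalently $|c(F')/H| = |\Gamma/H|$), then by \Cref{oplus} we have $c(F\oplus F') = c(F)+c(F')+\{C_{F,F'}\}$, and by \Cref{sumset} this equals all of $\Gamma$. This is the engine that lets the quotienting terminate.

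Next I would argue by induction on the main while-loop of \nameref{sub1} (indexed by $i$, with $m_i = |\Gamma_i|$ strictly decreasing) that after iteration $i$ the realization $F_1\oplus\dots\oplus F_i$ has the property that its colour set, reduced modulo $H_{i+1}'$, is \emph{everything}: $c(F_1\oplus\dots\oplus F_i) + H_{i+1}' = \Gamma$, or more precisely that $\{c_{i+1}(h): h\in F_1\oplus\cdots\oplus F_i\} = \Gamma_{i+1}$. The base case is $i=0$ trivially ($\Gamma_1 = \Gamma$, empty realization, but here one needs the coset produced in the first call). The inductive step combines two facts: (a) by the exit condition of \nameref{Amain}, $c_i(F_1\oplus\cdots\oplus F_i)$ contains a coset $A_i$ of $H_i$ inside $\Gamma_i$; and (b) passing from $c_i$ to $c_{i+1} = c_1 + H_{i+1}'$ with $H_{i+1}' = \psi(H_i, H_i')$ exactly quotients $\Gamma_i$ by $H_i$ — this is what the $\psi$ construction is designed to do, and I would verify that $\psi(H_i,H_i')\leqslant\Gamma$ with $\Gamma/\psi(H_i,H_i') \cong \Gamma_i/H_i$. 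The content of the step is then: a coset of $H_i$ in $\Gamma_i$ becomes a single point in $\Gamma_i/H_i$, and the inductive hypothesis supplies surjectivity one level up, so \Cref{sumset} (applied in $\Gamma$ with subgroup $H_{i+1}'$) upgrades this to surjectivity at level $i+1$. When the loop terminates we have $m_i = 1$, i.e. $H_i' = \Gamma$ and $\{c_i(h) : h \in F_1\oplus\cdots\oplus F_{i-1}\}$ is all of $\Gamma_{i-1}$, which unwinds to: $\{c(h) : h\in \bigoplus_{j} F_j\} = \Gamma$.

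Then I would handle the final extension step. The realizations $F_1,\dots,F_i$ together cover only the free vertices of the used blueprint pairs and the fixed-vertex bundles $\pi_v$; the remaining vertices of $G$ — those in $\overline\Pi$ — get embedded by the loop over $\pi\in\overline\Pi$, producing $F_{i+1} = \bigoplus_\pi F'_\pi$, each $F'_\pi$ a singleton bijection onto a monochromatic-under-$\cC$ block $\hat\pi\subset\cC_w$. The crucial point is that this extension is \emph{uniform}: every $f\in F = \bigoplus_{j\leq i+1}F_j$ restricts to the same map on $\bigcup\overline\Pi$, and on each used blueprint pair the fixed vertices and their bundles are also constant across $F$; so by the repeated application of \Cref{oplus}, $c(F) = \{c(h) : h\in\bigoplus_{j\leq i}F_j\} + c_{\mathrm{fixed}}$ for a single additive constant $c_{\mathrm{fixed}}$ coming from $F_{i+1}$ and the fixed/bundle edges — here I need that the definition of $c(h)$ with the vertex-colouring term $\sum_{u\in\iota(F)}d(u)\cdot\cC(h(u))$ matches the genuine edge-sum $\sum_{xy\in E(G)}c_0(f(x)f(y))$ once $c_0$ is recovered from $c$ and $\cC$ via \beq{recover} c_0(xy) = c(xy) + s + \cC(x) + \cC(y)\enq and the well-behavedness identity \ref{C_correct}; this is a routine but slightly fiddly counting of how each vertex's degree contributes. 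Since adding a constant to a set that equals all of $\Gamma$ still equals all of $\Gamma$, and $0_{\Gamma_0}\in\Gamma$, there is $f_0\in F$ with $\sum_{xy\in E(G)}c_0(f_0(x)f_0(y)) = 0_{\Gamma_0}$.

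I expect the main obstacle to be the last paragraph: carefully reconciling the abstract quantity $c(h)$ (which mixes the reduced edge-colouring $c$ with degree-weighted vertex colours) against the honest sum $\sum_{xy\in E(G)} c_0(f(x)f(y))$ over the full embedded copy of $G$. One has to account for three kinds of edges — those inside a single realization's free-vertex gadget, those between fixed-vertex bundles, and those in $F_{i+1}$ — and check that the $\cC$-corrections telescope correctly using that $\Pi_0$ is a partition with each block's degree-sum divisible by $\kappa$ and that $\kappa t\in\Gamma$ for all colours $t\in T$ (so the leftover $\cC$-terms land in $\Gamma$ and are absorbed). The group-theoretic induction, by contrast, should go through cleanly once the $\psi(H,H')$ / \Cref{sumset} interface is set up, since that machinery was built precisely for this.
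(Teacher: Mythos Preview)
Your overall architecture is right and the final reconciliation of $c_1$ with $c_0$ (using that each $\pi\in\overline\Pi$ has degree-sum divisible by $\kappa$ and that $\kappa\cC(v)\in\Gamma$) is handled correctly. But the central inductive invariant is stated backwards, and this is a genuine gap.

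You claim that after round $i$ the set $\{c_{i+1}(h):h\in F_1\oplus\cdots\oplus F_i\}$ equals all of $\Gamma_{i+1}$, i.e.\ that the cumulative colour set \emph{surjects} onto $\Gamma/H'_{i+1}$. This already fails at $i=1$: the exit condition of \nameref{Amain} only guarantees that $c_1(F_1)$ contains a single coset $a_1+H_1=a_1+H'_2$, so its image in $\Gamma/H'_2$ is a singleton, not the whole quotient. (Also note the exit condition concerns $c_i(F_i)$, the realization built in that call, not the cumulative $c_i(F_1\oplus\cdots\oplus F_i)$.) The correct invariant is the dual one: $c_1(F_1\oplus\cdots\oplus F_\ell)$ \emph{contains} a full coset $r+H'_{\ell+1}$ of the growing subgroup. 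This is what the paper proves, and the inductive step uses no \Cref{sumset}: the coset of $H'_{\ell+1}$ from the hypothesis, added (via \Cref{oplus}) to a lift of the coset $a_{\ell+1}+H_{\ell+1}\subset c_{\ell+1}(F_{\ell+1})$, yields a coset of $\psi(H_{\ell+1},H'_{\ell+1})=H'_{\ell+2}$ directly.

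\Cref{sumset} is then invoked exactly once, at the very end. The termination condition $m_{k+1}=1$ forces $H_k=\Gamma_k$, so $c_k(F_k)=\Gamma/H'_k$; hence $B_k\coloneqq c_1(F_k)$ satisfies $|B_k/H'_k|=|\Gamma/H'_k|$, and \Cref{sumset} applied with $A=r+H'_k$ gives $c_1(F_1\oplus\cdots\oplus F_k)=\Gamma$. After this point your description (adding $F_{k+1}$ shifts by a constant, then translating from $c_1$ to $c_0$ lands in $\Gamma$ by the $\kappa$-divisibility of part degree-sums) matches the paper.
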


\begin{proof}

If \nameref{sub1} returns a set of functions $F$, then the algorithm successfully terminated and so there is a sequence of realizations $F_1,\ldots,F_k$ such that for each $1\leq i\leq k$, $c_i(F_i)$ contains a coset $a_i+H_i \subset \Gamma/H_{i}'$, as well as the realization $F_{k+1}=\bigoplus_{\pi\in \overline\Pi}F'_\pi$ obtained at the end of \nameref{sub1}. Let  $r_i\in \Gamma$ be so that $a_i=r_i+H'_i$.

We first prove that 
\begin{equation}\label{Fgenall}
    c_1(F_1 \oplus ... \oplus F_k) = \Gamma.
\end{equation}
For this, we use the following claim:

\begin{claim*}
For all $\ell \in [k]$, there exists some $b_\ell\in\Gamma$ such that
$$c_1(F_1\oplus\cdots \oplus F_{\ell})\supset b_\ell+H'_{\ell+1}.$$
\end{claim*}
\begin{subproof}[Proof of claim]
We proceed by induction. We have $c_1(F_1)\supset r_1+H_1=r_1+H'_2$.

Now suppose the claim holds for some $\ell \in [k-1]$. Recalling \Cref{oplus}, we define a constant $r_{\ell,\ell+1}\in \Gamma$ such that 
$$c_1(F_1\oplus \cdots \oplus F_{\ell+1}) = c_1(F_1\oplus \cdots \oplus F_\ell) + c_1(F_{\ell+1}) + r_{\ell,\ell+1}.$$

We have $c_{\ell+1}(F_{\ell+1})\supset A_{\ell+1}=a_{\ell+1} + H_{\ell+1}$, $c_{\ell+1}=c_1/H'_{\ell+1}$ and $H_{\ell+1}\leqslant \Gamma/H'_{\ell+1}$ by construction. Thus, there is some $I_{\ell+1}\subset \Gamma$ such that $H_{\ell+1} = \{a+H_{\ell+1}':a\in I_{\ell+1}\}$, and so $c_{1}(F_{\ell+1})\supset \{r_{\ell+1}+a+h_a:a\in I_{\ell+1}\}$ for some collection $\{h_a\}_{a \in I_{\ell+1}}\se H_{\ell+1}'$. Using our induction hypothesis, we then have
$$\begin{aligned}
    c_1(F_1\oplus\cdots \oplus F_\ell\oplus F_{\ell+1})&=c_1(F_1\oplus \cdots \oplus F_\ell) + c_1(F_{\ell+1}) + r_{\ell,\ell+1}\\
    &\supset r_{\ell,\ell+1} +b_\ell  + r_{\ell+1} + H_{\ell+1}'+ \{a+h_a:a\in I_{\ell+1}\}\\
    & = r_{\ell,\ell+1} +b_\ell + r_{\ell+1} + \psi(H_{\ell+1},H_{\ell+1}')\\
    &=r_{\ell, \ell+1} +b_\ell + r_{\ell+1} + H_{\ell+2}'.
\end{aligned}$$ 
This proves the claim.
\end{subproof}

Now, as the algorithm terminated with $m_{k+1}=1 = |\Gamma/H'_{k+1}|$ it must be the case that $H'_{k+1} = \Gamma$. Hence, by the claim $c_1(F_1\oplus\cdots \oplus F_{k}) = \Gamma$, as desired.

Next, recalling that $c_0(xy)=c_1(xy)+s+\cC(x)+\cC(y)$ for all $x, y$ by \cref{newcol}, the definition of $G'$ in \cref{G'def} and its partition given by \cref{eq.G'partition}, and by \Cref{def.func_col}, we have for all $f\in F$:
$$
\begin{aligned}
\sum_{xy\in E(G)}c_0(f(x)f(y))&=c_0(f) - \sum_{v\in \iota(F)} \cC(f(v)) \cdot d(v)\\
&= c_1(f) + s\cdot n + \sum_{v\in V(G)\setminus \iota(F)} \cC(f(v)) \cdot d(v) + \sum_{v\in \iota(F)} \cC(f(v)) \cdot d(v) - \sum_{v\in \iota(F)} \cC(f(v)) \cdot d(v) \\
&= c_1(f) + 0_{\Gamma_0} + \sum_{\pi\in \Pi}\sum_{v\in \pi} \cC(f(v)) \cdot d(v).
\end{aligned}$$
  
For each $\pi\in \Pi$ we fix some arbitrary $v_\pi \in \pi$, and note that for all $v \in \pi$ we have $\cC(f(v)) = \cC(f(v_\pi))$. Recalling \Cref{well-behaved} we also know $\kappa \cdot \cC(f(v_\pi)) \in \Gamma$ for all $\pi \in \Pi$. Finally, observe that by construction $\sum_{v\in \pi}  d(v)$ is divisible by $\kappa$. Hence
$$\sum_{xy\in E(G)}c_0(f(x)f(y))=c_1(f) + \sum_{\pi\in \Pi}\cC(f(v_\pi)) \cdot \sum_{v\in \pi}  d(v) = c_1(f) + \sum_{\pi\in \Pi} g_\pi,$$
for some $\{g_\pi: \pi\in \Pi\}\subset \Gamma$ independent of $f$ because, by definition of a realization, $F$ is constant on each $v\in V(G)\setminus \iota(F)$. Thus, we have some constant $g'\in \Gamma$ such that for all $f\in F$,
$$\sum_{xy\in E(G)}c_0(f(x)f(y))=c_1(f)+g'.$$

By \cref{Fgenall}, there exists some $f\in F$ with $c_1(f)=-g'$, and so $f$ provides a zero-sum embedding of $G$ in $R_0$, as desired.
\end{proof}

Then, to prove \Cref{main}, we only need to show that \nameref{sub1} always successfully terminates. For this we need to show that for some $k$ with $m_{k+1}=1$ the following holds.
\begin{enumerate}[(I)]
    \item $K_i\neq \emptyset$ for $1\leq i\leq k$.\label{cond:K}
    \item $R_i\neq\emptyset$ for $1\leq i\leq k+1$.\label{cond:R}
    \item In \nameref{sub1}, for each $\pi\in \overline \Pi$ there exists some $\cC_w$ satisfying $|\cC_w|\geq |\pi|.$\label{cond:embed}
    \item A realization of $(\Upsilon_i,\Upsilon_i')$ at multiplicity $\lambda$ is always successfully found when running \nameref{Amain}.\label{cond:realize}
\end{enumerate}

Part \ref{cond:K} will be proven in \Cref{gadget_counts} by \Cref{K_size}. For part \ref{cond:embed}, note that \nameref{Amain} guarantees that each non-empty $\cC_w$ satisfies $|\cC_w|\ge 2\Delta^2\ge|\pi|$ for all $\pi\in \overline \Pi$. By pigeonhole, it then suffices to show that $|\overline{R}|\geq 4\Delta^2 \cdot|\overline \Pi|$. Since $|\overline\Pi|\leq 2n$, we will prove that $|\overline{R}|\geq 8\Delta^2n$; in fact in \Cref{vertex_count_thm} we prove this lower bound holds for every $R_i$ (and so in particular for $\overline{R}=R_{k+1}$), and hence handle both parts \ref{cond:R} and \ref{cond:embed} above. Finally, part \ref{cond:realize} will be shown in \Cref{phase_1,phase_2}, where we distinguish two cases depending on the multiplicity $\lambda$ at which we run \nameref{Amain}.

\section{Gadget counts}\label{gadget_counts}
\begin{theorem}\label{K_size}

No set $K_i$ is ever empty during the execution of \nameref{sub1}.

\end{theorem}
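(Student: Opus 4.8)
The plan is to bound the total number of blueprint pairs that \nameref{sub1} ever removes from its pool and check that this stays strictly below $|K|\ge n/(5\Delta^6)$, which is provided by \Cref{K_partition}. Every pass through the \textbf{while} loop of \nameref{Amain} deletes exactly one pair from $K$, so writing $t_i$ for the number of such passes during the $i$-th call to \nameref{Amain} (equivalently, at round $i$ of \nameref{sub1}), it suffices to show $\sum_{i\le k}t_i<|K|$. Recall that round $i$ uses multiplicity $\lambda_i=\beta$ when $m_i\ge n/\alpha$ (Phase~1) and $\lambda_i=2$ otherwise (Phase~2), and that $m_{i+1}=m_i/|H_i|$ where $H_i\leqslant\Gamma_i$ is the nontrivial subgroup returned; in particular $m_{i+1}\le m_i/2$ and $m_1>m_2>\dots>m_{k+1}=1$.

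The heart of the argument will be the per-round bound
\[(t_i-1)(\lambda_i-1)+1\ \le\ m_i-m_{i+1}.\]
This is immediate when $t_i=1$ (the left side is $1$ and $m_{i+1}<m_i$), so assume $t_i\ge2$. Let $F^{(j)}$ be the realization accumulated inside \nameref{Amain} after $j$ passes and $F'_j$ the realization found on pass $j$; since $F'_j$ has multiplicity $\lambda_i$ we have $|c(F'_j)|\ge\lambda_i$, and by \Cref{oplus} each $c(F^{(j)})$ is a translate of $c(F'_1)+\dots+c(F'_j)$, so it has the same size and contains a coset of a given subgroup exactly when that sumset does. For $1\le j\le t_i-1$ the loop has not stopped, so $c(F^{(j)})$ contains no coset of a nontrivial subgroup of $\Gamma_i$. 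Applying \Cref{Kneser} to $c(F^{(j-1)})+c(F'_j)$ for $2\le j\le t_i-1$: the coset alternative is excluded, and the alternative $|c(F^{(j)})|\ge m_i$ would force $c(F^{(j)})=\Gamma_i$, which contains the coset $\Gamma_i$ of the nontrivial subgroup $\Gamma_i$ --- also excluded --- so $|c(F^{(j)})|\ge|c(F^{(j-1)})|+\lambda_i-1$. Iterating from $|c(F^{(1)})|=|c(F'_1)|\ge\lambda_i$ gives $|c(F^{(t_i-1)})|\ge(t_i-1)(\lambda_i-1)+1$. Conversely, $c(F^{(t_i-1)})$ contains no coset of $H_i$, hence avoids at least one element in each of the $m_{i+1}$ cosets of $H_i$ in $\Gamma_i$, so $|c(F^{(t_i-1)})|\le m_{i+1}(|H_i|-1)=m_i-m_{i+1}$. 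Combining the two estimates proves the bound; in particular $t_i\le 1+\frac{m_i-m_{i+1}}{\beta-1}$ in Phase~1 and $t_i\le m_i-m_{i+1}$ in Phase~2.

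I would then sum these, exploiting that they telescope. If every round is Phase~2 then $\sum_i t_i\le m_1-1<n/\alpha<|K|$ and we are done, so assume there is a first Phase~2 round $i_0$, with $m_i\ge n/\alpha$ for $i<i_0$ and $m_{i_0}<n/\alpha$. Over Phase~1,
\[\sum_{i<i_0}t_i\ \le\ \frac{m_1-m_{i_0}}{\beta-1}+(i_0-1)\ \le\ \frac{n}{\beta-1}+|I_1|,\]
where $|I_1|\coloneqq i_0-1\le\log_2\alpha+1$ since the $m_i$ at least halve at each Phase~1 step while staying $\ge n/\alpha$. Over Phase~2,
\[\sum_{i\ge i_0}t_i\ \le\ \sum_{i\ge i_0}(m_i-m_{i+1})\ =\ m_{i_0}-m_{k+1}\ =\ m_{i_0}-1\ <\ \frac{n}{\alpha}.\]
With $\alpha=10\Delta^6$ and $\beta=20\Delta^6$ (so $\beta-1\ge19\Delta^6$) from \cref{equ:alphabeta}, the total is at most $\frac{n}{19\Delta^6}+\frac{n}{10\Delta^6}+|I_1|=\frac{29n}{190\Delta^6}+|I_1|$, which is strictly below $\frac{n}{5\Delta^6}\le|K|$ once $|I_1|\le\frac{9n}{190\Delta^6}$, i.e. once $n$ exceeds a fixed polynomial in $\Delta$. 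We may assume this: for smaller $n$, $v(G)\le\Delta n$ is bounded in terms of $\Delta$, and \Cref{main} follows at once from $R(G,\Gamma_0)\le R_n(G)$. Hence the pool is never exhausted, and no $K_i$ is ever empty.

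Two points on rigor. The per-round bound tacitly assumes \nameref{sub1} reaches round $i$ and that each realization it requests is found; strictly this belongs to a common induction with conditions \ref{cond:R}--\ref{cond:realize} established in the following sections, but the key point is that whichever rounds execute obey the bound, so the running count of deleted pairs never reaches $|K|$. The main obstacle I expect is the per-round bound itself, and within it the upper estimate $|c(F^{(t_i-1)})|\le m_i-m_{i+1}$ --- the observation that a subset of $\Gamma_i$ missing a full coset of $H_i$ cannot be large, which paired with Kneser on the lower side makes the sum telescope; the naive alternative only yields $\sum_i m_i\le 2n$, too weak for $\beta-1=\Theta(\Delta^6)$, and it is precisely to fit the telescoped sum inside $|K|\ge n/(5\Delta^6)$ that the two multiplicities are set to $\beta=2\alpha$.
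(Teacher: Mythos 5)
Your proof is correct and follows essentially the same route as the paper's: derive the per-call bound on the number of blueprint pairs consumed by combining Kneser's theorem (each pass adds $\ge\lambda-1$ elements to the accumulated sumset, once you rule out the ``full group'' alternative) with the observation that a set avoiding every coset of $H_i$ has size at most $m_i-m_{i+1}$, then telescope over rounds, splitting Phase~1 (gain $\beta-1$ per pass) from Phase~2. Your bookkeeping is slightly tighter than the paper's --- you absorb the $+1$ per Phase~2 round into $m_i-m_{i+1}$ and bound the number of Phase~1 rounds by $\log_2\alpha+1$ rather than using $\ell\le\log_2 n$ globally --- and, notably, you flag explicitly that the final comparison $\tfrac{n}{\beta-1}+\tfrac{n}{\alpha}+(\text{log term})<\tfrac{n}{5\Delta^6}$ needs $n$ large relative to $\Delta$, a small-$n$ caveat the paper leaves implicit; your fallback to $R(G,\Gamma_0)\le R_n(G)$ for bounded $n$ is the right fix.
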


\begin{proof}
Suppose we have successfully gone through $\ell$ runs of the algorithm. We show that $K_{\ell+1}\neq\emptyset$. Say that the first $\ell_0$ calls of \nameref{Amain} in the running of \nameref{sub1} are in Phase 1, and the calls after $\ell_0$ are in Phase 2.

Recall that we start by looking for a $\kappa$--well-behaved tuple $(R,\Gamma,T,\cC,s)$ of minimal size $\sigma=|\Gamma|$, and call \nameref{sub1} with the group $\Gamma$. 

Write $S_{i}$ to denote the set $c(F)$ used in the $i^\text{th}$ call of \nameref{Amain}, so that each $S_i$ contains a coset $A_i=a_i+H_i$. On the $(\ell+1)^\text{th}$ call, we start with $S_{\ell+1}=\emptyset$ and attempt to incrementally add elements of $\Gamma/H'_{\ell}$ to it until it contains a coset $A_{\ell+1}=a_{\ell+1}+H_{\ell+1}$.

We will give an upper bound on the number of blueprint pairs used in call $i$ (i.e. the quantity $|K_i|-|K_{i+1}|$) for each $1\leq i \leq \ell$. Combined with the lower bound on $K_1=K$ given by \Cref{K_partition}, this will yield the theorem. 

Note that the maximum size a subset of $\Gamma/H'_i$ can have without containing a coset of $H_i\leqslant \Gamma/H'_i$ is $\frac{|H_i|-1}{|H_i|} \cdot |\Gamma/H'_i|=|\Gamma/H'_{i}|-|\Gamma/H'_{i+1}|$. 

First, for $1\leq i\leq \ell_0$, note that by \Cref{Kneser} every blueprint pair from $K_i$ that is selected in \nameref{Amain} increases the size of $c(S_i)$ by at least $\beta-1$, and so we use at most
\[|K_i|-|K_{i+1}|\leq \frac{|\Gamma/H'_{i}|-|\Gamma/H'_{i+1}|}{\beta-1}+1\] 
blueprint pairs to generate $S_i$.

For $i>\ell_0$, we instead know by \Cref{Kneser} that every blueprint pair from $K_i$ that is selected increases the size of $c(S_i)$ by at least one, and so
\[|K_i|-|K_{i+1}|\leq |\Gamma/H'_{i}|-|\Gamma/H'_{i+1}|+1.\]
Recall that $H'_1=\{0_\Gamma\}$. Putting together the above two bounds (noting the second is vacuous if $\ell\le\ell_0$), we see that

\begin{align*}
\label{equ:gadgetcounts}|K_1|-|K_{\ell+1}|&\leq \sum_{i=1}^{\ell_0}\left(\frac{|\Gamma/H'_{i}|-|\Gamma/H'_{i+1}|}{\beta-1}\right)+\ell_0+\sum_{i=\ell_0+1}^{\ell}\left(|\Gamma/H'_{i}|-|\Gamma/H'_{i+1}|\right)+(\ell-\ell_0) \\ &\leq \frac{|\Gamma|}{\beta-1} + \frac{\beta-2}{\beta-1}|\Gamma/H'_{\ell_0+1}|+\ell \\
&\leq \frac{n}{\beta-1}+|\Gamma/H'_{\ell_0+1}| +\ell.
\tag{$**$}
\end{align*}

Recall now that we only stop running Phase 1 when the size of the subgroup we generate is less than $ n/\alpha$, i.e. we have $|\Gamma/H'_{\ell_0+1}|< n/\alpha$. Moreover, we clearly have $\ell\leq \log_2 |\Gamma|$ since at every iteration we quotient by a nontrivial subgroup of $\Gamma$. Finally, recalling that $\beta=2\alpha$ we see that
$$|K_1|-|K_{\ell+1}|\leq \frac{n}{\beta-1}+\frac{n}{\alpha} +\log_2(n)< \frac{2n}{\alpha}=\frac{n}{5\Delta^6}=|K_1|$$ and therefore $K_{\ell+1}\neq\emptyset$ as desired. Moreover, the same analysis shows that in the process of running the $(\ell+1)^\text{th}$ call of the algorithm, $K_{\ell+1}$ stays non-empty.
\end{proof}

\section{Vertex counts}\label{Sec.vertexcounts}

Recall the discussion at the end of \Cref{Sec.algorithm}, which shows that in order to prove points \Cref{cond:R,cond:embed} it suffices to prove that $|R_i|\geq 8\Delta^2n$ for each $i$ in the running of our algorithm.

\begin{lemma} \label{vertex_step}
Suppose we call \nameref{Amain}  $(R_i,\Gamma_i,c_i,\cC,\lambda, K_i)$ and it returns $(A_i,H_i,R_{i+1},F_i,K_{i+1})$. Then $|R_{i+1}|\geq |R_i| - 7\Delta^3 (|\Gamma_i|-|\Gamma_i|/|H_i|).$
\end{lemma}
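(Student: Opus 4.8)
\textbf{Proof plan for \Cref{vertex_step}.}

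The plan is to account for every vertex that gets deleted from $R_i$ during a single call to \nameref{Amain}$(R_i,\Gamma_i,c_i,\cC,\lambda,K_i)$ and show the total loss is at most $7\Delta^3(|\Gamma_i|-|\Gamma_i|/|H_i|)$. There are exactly two places where vertices disappear inside the \textbf{while} loop of \nameref{Amain}: (a) the line $R\gets R\setminus V(\gamma_j)$, where we delete the vertex set of the gadget $\gamma_j$ realizing the blueprint pair $(\Upsilon_j,\Upsilon_j')$ selected at step $j$; and (b) the line $R\gets R\setminus L$, where $L$ collects all vertices lying in a colour class $\cC_v$ of size smaller than $2\Delta^2$. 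So I would bound these two contributions separately and sum.

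First I would bound the loss from the gadgets. The key observation is that at each iteration $j$ of the loop, selecting a blueprint pair and realizing it as a gadget strictly increases $|c(F)|$ — by Kneser's theorem (\Cref{Kneser}), in Phase 2 it increases by at least $1$ per step, and in Phase 1 by at least $\lambda-1=\beta-1\ge 1$ per step — and the loop runs only until $c(F)$ contains a coset of a nontrivial subgroup $H_i\le\Gamma_i$. As in the proof of \Cref{K_size}, the number of iterations before this happens is at most the largest size of a coset-free subset of $\Gamma_i$, namely $|\Gamma_i|-|\Gamma_i|/|H_i|$, plus possibly one more step; more precisely the number of gadgets used is at most $|\Gamma_i|-|\Gamma_i|/|H_i|$ (one can absorb the ``$+1$'' since upon the final selection we already have the coset, or equivalently bound the number of \emph{completed} increments). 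Each gadget $\gamma_j$ has $|V(\gamma_j)| = |D_1|+|D_2|+|M|+|X_1|+|X_2|+\sum_{v\in D_1\cup D_2\cup M}|P_v|$; using $|D_1|,|D_2|,|M|\le\Delta$, $|X_1|,|X_2|\le\lambda\le\beta$, and $|P_v|=|\pi_v|-1<\Delta^2$ for each of the at most $2\Delta$ fixed vertices $v$ (by \Cref{K_partition}(v) and condition \ref{Bundles_correct} of \Cref{def:realization}), each gadget has $O(\Delta^3)$ vertices, which I would bound by $6\Delta^3$ (being slightly generous with constants, as the paper does throughout). This gives a total gadget loss of at most $6\Delta^3(|\Gamma_i|-|\Gamma_i|/|H_i|)$.

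Next I would bound the loss from the sets $L$. The crucial point here is that a colour class $\cC_v$ can only shrink when a gadget deletes some of its vertices, and once $\cC_v$ becomes too small (size $<2\Delta^2$) it is removed entirely and never reappears — so across the whole call, the \emph{total} number of vertices ever discarded via the $L$-lines is at most $2\Delta^2$ times the number of distinct colour classes that are ever emptied in this way. Since each of these classes loses at least one vertex to a gadget before being discarded, and each gadget touches at most $|V(\gamma_j)|\le 6\Delta^3$ colour classes (in fact far fewer, but this bound suffices), the number of classes discarded is at most $6\Delta^3$ times the number of gadgets, i.e. at most $6\Delta^3(|\Gamma_i|-|\Gamma_i|/|H_i|)$ classes. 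Hence the $L$-loss is at most $2\Delta^2\cdot 6\Delta^3(|\Gamma_i|-|\Gamma_i|/|H_i|) = 12\Delta^5(\cdots)$ — wait, this is too weak; I would instead argue more carefully that each gadget \emph{removes} at most $6\Delta^3$ vertices total and can therefore push at most a bounded number of classes below threshold, but the cleaner route is: the number of vertices deleted via $L$ is at most $2\Delta^2$ per emptied class, and a class is emptied only if a gadget previously removed one of its vertices, so (number of emptied classes) $\le$ (total gadget vertices removed) $\le 6\Delta^3(|\Gamma_i|-|\Gamma_i|/|H_i|)$, giving $L$-loss $\le 2\Delta^2\cdot$(number of emptied classes)/1. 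To get the clean constant $7\Delta^3$ overall I would sharpen this: observe that a gadget of $\le 6\Delta^3$ vertices can cause at most $6\Delta^3$ classes to become empty only in the extreme case, but each such class had $<2\Delta^2$ vertices \emph{at the moment of removal}, and crucially these removed vertices are \emph{never counted again}; a careful bookkeeping (amortizing the $L$-removals against the at most $|R_i|$ total vertices, or rather against gadget removals) yields that the combined loss is at most $7\Delta^3(|\Gamma_i|-|\Gamma_i|/|H_i|)$.

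\textbf{Main obstacle.} The delicate part is precisely this last accounting for $L$: one must avoid double-counting vertices that are removed and ensure the bound is linear in the number of gadgets (equivalently in $|\Gamma_i|-|\Gamma_i|/|H_i|$) rather than in $|R_i|$ or $|T|$. The right framing is a charging/amortization argument: charge each vertex deleted via an $L$-line to the gadget deletion that last shrank its colour class below a running threshold, noting each gadget is charged $O(\Delta^2)$ times per colour class it intersects and intersects $O(\Delta^3)$ classes — or, more simply, bound $|L|$ at each step by $2\Delta^2$ times the number of classes intersected by the current gadget that drop below $2\Delta^2$, which telescopes. Everything else (the gadget-size estimate, the Kneser-based iteration count) is a routine repackaging of computations already carried out in the proof of \Cref{K_size}.
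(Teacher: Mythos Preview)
Your overall framing is right --- account for $V(\gamma_j)$ and for $L$ at each iteration and multiply by the number of iterations --- but there is a genuine arithmetic gap that breaks the argument in Phase~1.

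You assert ``each gadget has $O(\Delta^3)$ vertices, which I would bound by $6\Delta^3$'', but you yourself note $|X_1|,|X_2|\le\lambda\le\beta$, and $\beta=20\Delta^6$. So in Phase~1 a single gadget can have on the order of $2\beta\approx 40\Delta^6$ vertices, not $O(\Delta^3)$. This invalidates both your gadget-loss estimate and your $L$-estimate (where you bound the number of colour classes touched by a gadget by $O(\Delta^3)$). The compensating fact you mention but then fail to use is that each realization increases $|c(F)|$ by at least $\lambda-1$, so the number of iterations is at most $\dfrac{|\Gamma_i|-|\Gamma_i|/|H_i|}{\lambda-1}$, not $|\Gamma_i|-|\Gamma_i|/|H_i|$. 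The paper's proof uses exactly this: per step $|V(\gamma_j)|\le 2(\lambda-1+\Delta^2)$ and $|L|\le(2\Delta^2-1)(2\lambda-2+d_1+d_2)$, giving a per-step loss $<4\Delta^2(\lambda-1+\Delta)+2\Delta^2$; multiplying by $\dfrac{|\Gamma_i|-|\Gamma_i|/|H_i|}{\lambda-1}$, the $\lambda$'s cancel and you land below $7\Delta^3(|\Gamma_i|-|\Gamma_i|/|H_i|)$ uniformly in $\lambda$.

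For the $L$-accounting specifically, your amortization over ``emptied classes'' is unnecessarily complicated. The clean per-step bound comes from one observation you do not mention: by condition~(2) of \Cref{def:gadgets}, every vertex in $P_v$ has the same $\cC$-colour as $v$, so the gadget meets at most $|D_1|+|D_2|+|M|+|X_1|+|X_2|\le 2\lambda-2+d_1+d_2$ distinct colour classes, and only those classes can drop below the $2\Delta^2$ threshold at that step. This gives $|L|\le(2\Delta^2-1)(2\lambda-2+d_1+d_2)$ immediately, with no charging argument needed.
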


\begin{proof}
    For each blueprint pair $(\Upsilon,\Upsilon')$ we realize, we remove $|V(\gamma_j)| + |L|$ vertices from $R_i$. Now,
    \[|V(\gamma_j)|\le 2\lambda -2 + (d_1+d_2)\cdot \kappa\leq 2(\lambda-1+\Delta^2)\]
    and
    \[|L|=\big|\{v\in R_i\setminus V(\gamma_j):|\{w\in R_i\setminus V(\gamma_j):\cC(w)=\cC(v)\} | <2\Delta^2\}\big|\leq (2\Delta^2-1) \cdot (2\lambda-2+d_1+d_2),\] 
    where the latter bound follows since we know that all the vertex classes under $\cC$ in $R_i$ have size at least $2\Delta^2$, and each of the vertices in $P_v$ by definition are in the same class as $v$ so $V(\gamma_j)$ only contains vertices from at most $2\lambda-2+d_1+d_2$ different classes. Thus, we use at most $(4\Delta^2-2)(\lambda-1 +\Delta) + 2(\lambda - 1 +\Delta^2) < 4\Delta^2(\lambda-1+\Delta)+2\Delta^2$ vertices.

    Now recall from \Cref{gadget_counts} that we only use at most $|\Gamma_i|\cdot(1-1/|H_i|)/(\lambda -1)$ blueprint pairs when generating $H_i$. Then in total, we use less than $7\Delta^3(|\Gamma_i|-|\Gamma_i|/|H_i|)$ vertices, and so we indeed have that $|R_{i+1}|>|R_i|-7\Delta^3(|\Gamma_i|-|\Gamma_i|/|H_i|).$
\end{proof}

\begin{corollary}\label{vertex_jump}
    When running \nameref{sub1}, we have $|R_{i_2}|> |R_{i_1}| -7\Delta^3 (|\Gamma_{i_1}| - |\Gamma_{i_2}|)$ for every $i_1\le i_2$.
\end{corollary}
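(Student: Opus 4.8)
\textbf{Proof proposal for \Cref{vertex_jump}.}

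The plan is to obtain the statement by telescoping the per-call bound from \Cref{vertex_step} over all calls of \nameref{Amain} executed between indices $i_1$ and $i_2$ in the running of \nameref{sub1}. First I would recall that between $R_{i_1}$ and $R_{i_2}$ the algorithm performs the calls indexed $i_1, i_1+1, \dots, i_2-1$, and that \Cref{vertex_step} applied to call $i$ gives
\[
|R_{i+1}| \ge |R_i| - 7\Delta^3\bigl(|\Gamma_i| - |\Gamma_i|/|H_i|\bigr).
\]
Summing these inequalities over $i_1 \le i < i_2$ yields
\[
|R_{i_2}| \ge |R_{i_1}| - 7\Delta^3 \sum_{i=i_1}^{i_2-1}\bigl(|\Gamma_i| - |\Gamma_i|/|H_i|\bigr),
\]
so it remains to show that $\sum_{i=i_1}^{i_2-1}\bigl(|\Gamma_i| - |\Gamma_i|/|H_i|\bigr) \le |\Gamma_{i_1}| - |\Gamma_{i_2}|$.

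The key observation is how the group shrinks from one call to the next. By the construction in \nameref{sub1}, after call $i$ we set $H'_{i+1} = \psi(H_i, H'_i)$ and $\Gamma_{i+1} = \Gamma/H'_{i+1}$, and from the definition of $\psi$ we have $|\psi(H_i, H'_i)| = |H_i|\cdot|H'_i|$, hence $|\Gamma_{i+1}| = |\Gamma|/(|H_i||H'_i|) = |\Gamma_i|/|H_i|$. Therefore each summand telescopes cleanly:
\[
|\Gamma_i| - |\Gamma_i|/|H_i| = |\Gamma_i| - |\Gamma_{i+1}|,
\]
and so
\[
\sum_{i=i_1}^{i_2-1}\bigl(|\Gamma_i| - |\Gamma_i|/|H_i|\bigr) = \sum_{i=i_1}^{i_2-1}\bigl(|\Gamma_i| - |\Gamma_{i+1}|\bigr) = |\Gamma_{i_1}| - |\Gamma_{i_2}|.
\]
Plugging this back in gives exactly $|R_{i_2}| > |R_{i_1}| - 7\Delta^3(|\Gamma_{i_1}| - |\Gamma_{i_2}|)$ (strict, since \Cref{vertex_step} is itself stated with a strict inequality in its proof, or simply because at least one call is made when $i_1 < i_2$), as required. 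The case $i_1 = i_2$ is trivial.

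The only mildly delicate point — and the one I'd want to state carefully rather than grind through — is the identity $|\Gamma_{i+1}| = |\Gamma_i|/|H_i|$, which hinges on $\psi(H_i,H'_i)$ being a subgroup of $\Gamma$ of order $|H_i|\cdot|H'_i|$ (noted when $\psi$ is defined) together with $H'_i \le \psi(H_i,H'_i)$, so that the quotients compose correctly. Everything else is a routine telescoping sum, so I do not expect any real obstacle here; the lemma is essentially bookkeeping built on top of \Cref{vertex_step}.
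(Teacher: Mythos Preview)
Your proposal is correct and follows essentially the same approach as the paper: apply \Cref{vertex_step} to each call, use the identity $|\Gamma_{i+1}|=|\Gamma_i|/|H_i|$ (which the paper invokes implicitly), and telescope. The only difference is that you spell out the justification for that identity, which the paper leaves tacit.
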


\begin{proof}
    By \Cref{vertex_step}, we have that for each $i_1\leq i< i_2$, 
    \[|R_{i+1}|> |R_i| - 7\Delta^3 (|\Gamma_i|-|\Gamma_i|/|H_i|) = |R_i| - 7\Delta^3 (|\Gamma_i|-|\Gamma_{i+1}|),\]
    and the result follows.
\end{proof}

\begin{theorem}\label{vertex_count_thm}
    Every set $R_i$ obtained during the execution of \nameref{sub1} satisfies $|R_i|\geq 8\Delta^2 n$.
\end{theorem}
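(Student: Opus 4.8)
\textbf{Proof plan for \Cref{vertex_count_thm}.}

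The plan is to start from the chosen $\kappa$--well-behaved tuple of minimum size $\sigma = |\Gamma|$ (which exists by Phase 0), recall the size guarantee \ref{R_size} for such a tuple, track the loss $|R|\to|R_1|$ incurred when passing to $R_1$ (\Cref{R'R_size}), and then apply \Cref{vertex_jump} to bound the total vertex loss over all calls of \nameref{Amain} in both phases. Write $\Gamma = \Gamma_1$ with $|\Gamma_1| = \sigma$, and let $\ell_0$ be the number of Phase~1 calls, so that $|\Gamma_{\ell_0+1}| < n/\alpha$ (and $|\Gamma_{\ell_0+1}| \ge n/\alpha$ before that step, or $\ell_0=0$). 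I would split into the two cases governed by whether $\sigma = |\Gamma| \ge n/\alpha$ or $\sigma < n/\alpha$, matching the two branches of \ref{R_size} and of \Cref{R'R_size}.

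In the first case, \ref{R_size} gives $|R| \ge |R_0| - 14\Delta^3(n - |\Gamma|)$; since we are in Phase~1, \Cref{R'R_size} gives $|R_1| > |R|/\alpha$. By \Cref{vertex_jump}, for every later index $i$ we have $|R_i| > |R_1| - 7\Delta^3(|\Gamma_1| - |\Gamma_i|) \ge |R_1| - 7\Delta^3|\Gamma| \ge |R_1| - 7\Delta^3 n$. Combining, $|R_i| > |R_0|/\alpha - (14\Delta^3 n)/\alpha - 7\Delta^3 n$, and recalling $|R_0| = C(\Delta)\cdot n$ with $\alpha = 10\Delta^6$, a routine estimate (using $C(\Delta)$ large enough, e.g. $C(\Delta)\ge \Delta^{42\Delta^6}C'$) gives $|R_i| \ge 8\Delta^2 n$. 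In the second case $\sigma < n/\alpha$: \ref{R_size} gives $|R|\ge |R_0|/(\alpha\beta^{2\Delta}) - 14\Delta^3(n-|\Gamma|) - 6\Delta^2(n/|\Gamma|)$; since we start directly in Phase~2, $\ell_0 = 0$, and \Cref{R'R_size} gives $|R_1| > |R| - 2\Delta^2 n$. Again \Cref{vertex_jump} gives $|R_i| > |R_1| - 7\Delta^3 n$, so $|R_i| > |R_0|/(\alpha\beta^{2\Delta}) - 14\Delta^3 n - 6\Delta^2 n - 2\Delta^2 n - 7\Delta^3 n$; here the dominant negative term is the constant (in $n$) loss and the whole bound is again $\ge 8\Delta^2 n$ provided $C(\Delta)$ is at least the stated $\Delta^{42\Delta^6}C'$, since $\alpha\beta^{2\Delta} = 10\Delta^6\cdot(20\Delta^6)^{2\Delta}$ is comfortably dominated by that choice of $C$.

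One subtlety to handle carefully is that the invariant must hold for \emph{every} $R_i$ arising, including intermediate states inside a single call of \nameref{Amain} (where blueprint pairs are removed one at a time together with classes that become too small); but \Cref{vertex_step}'s proof already bounds the cumulative loss within a call, and the monotonicity in \Cref{vertex_jump} covers intermediate indices, so it suffices to check the endpoints $R_i$. The main obstacle I anticipate is purely bookkeeping: making sure the two branches of \ref{R_size}, the two branches of \Cref{R'R_size}, and the $7\Delta^3(|\Gamma_1|-|\Gamma_i|)$ bound from \Cref{vertex_jump} are combined with the right constants so that the final slack against $8\Delta^2 n$ is genuinely positive under the declared value of $C(\Delta)$ — in particular confirming that the worst case is indeed the Phase~2 branch with its $\alpha\beta^{2\Delta}$ division, which forces the stated tower-type lower bound on $C(\Delta)$.
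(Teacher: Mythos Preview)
Your proposal is correct and follows essentially the same approach as the paper's proof: split on whether $|\Gamma|\ge n/\alpha$, invoke \cref{R_size} of \Cref{well-behaved} for $|R|$, pass to $|R_1|$ via \Cref{R'R_size}, and then apply \Cref{vertex_jump} to absorb the cumulative loss $7\Delta^3 n$; your resulting numerical bounds ($|R_0|/\alpha - 14\Delta^3 n/\alpha - 7\Delta^3 n$ in the first case, $|R_0|/(\alpha\beta^{2\Delta}) - (21\Delta^3 + 8\Delta^2)n$ in the second) match the paper's exactly. One small remark: the theorem as stated is only about the sets $R_i$ indexed by the outer loop of \nameref{sub1}, so the subtlety you raise about intermediate states inside a single call of \nameref{Amain} is not part of what must be established here.
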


\begin{proof} Suppose we have successfully gone through $\ell$ runs of the algorithm. We show that $|R_{\ell+1}|\geq 8\Delta^2 n$.

By \Cref{vertex_jump}, we have that $$|R_{\ell+1}|>|R_1|-7\Delta^3(|\Gamma_1|-|\Gamma_{\ell+1}|) > |R_1|-7\Delta^3 n.$$

Recall also that $|R_0| =  \Delta^{42\Delta^6}\cdot C'(\alpha\beta,\Delta) \cdot n > 100\alpha\beta^{2\Delta}\Delta^{2\beta} \cdot n$. It remains to lower bound $|R_1|$ relative to $|R_0|$. Say the $\kappa$--well-behaved tuple we found in Phase 0 (\Cref{phase0}) was $(R,\Gamma,T,\cC,s)$.

If $|\Gamma|\ge n/\alpha$, then by \Cref{well-behaved} we have $$|R|\geq |R_0|-14\Delta^3(n-n/\alpha)$$ and by \Cref{R'R_size} we have 
\[|R_1|\geq |R|/\alpha.\]

Putting everything together, this yields
$$|R_{\ell+1}|>|R_1|-7\Delta^3 n>|R|/\alpha - 7\Delta^3 n \ge (|R_0|-14\Delta^3(n-n/\alpha))/\alpha - 7\Delta^3 n > 8\Delta^2 n.$$

On the other hand, if instead we have $|\Gamma|<n/\alpha$, then by \Cref{well-behaved} and \Cref{R'R_size} we have
$$|R_{\ell+1}|>|R_1|-7\Delta^3n > |R|-(7\Delta^3+2\Delta^2) n \ge |R_0|/(\alpha\beta^{2\Delta}) - (21\Delta^3 + 8\Delta^2) n > 8\Delta^2 n ,$$
as desired. This completes the proof, and hence shown parts \ref{cond:R} and \ref{cond:embed}.
\end{proof}

Let us note that this is not the part of the proof of \Cref{main} that drives up the value of $C(\Delta)$, hence why the bounds in \Cref{vertex_count_thm} are so loose. Instead, the value for $C(\Delta)$, and most importantly its dependence on $C'(\alpha\beta,\Delta)$, comes from the argument at the end of \Cref{phase_1}.

\section{Realizing blueprints does not fail}\label{sec.nofail}

We will now show that every instance of \nameref{Amain}, called as a subroutine in Phase 1 and Phase 2 of \nameref{sub1}, always successfully realizes the desired blueprint pairs. For Phase 1 we do this by arguing that either we have a monochromatic copy of $G$ (which gives us a contradiction since we assumed no such copy exists at the start), or we can draw a contradiction to the minimality of $\sigma$ (chosen in \Cref{phase0}). Phase 2 is  similarly  shown to always succeed by drawing a contradiction to the minimality of $\sigma$.

We begin with two lemmas that will be of use in both cases.

\begin{lemma}\label{prelim_phase_arg}
    Let $X\subset R_0$ be a set of vertices along with an edge coloring $c=c_1/H':E(X)\to \Gamma/H'$ for some $H'\leqslant \Gamma\leqslant\Gamma_0$ and a vertex coloring $\cC:X\to \Gamma_0$ such that $\cC_t\coloneqq \{v\in X:\cC(v)=t\}$ has $|\cC_t|\geq 2\Delta(\lambda-2+\Delta)$ for all $t$ with $\cC_t\neq \emptyset$. 
    
    Suppose we fail to find a realization for some blueprint pair $(\Upsilon,\Upsilon')$ of type $(d, d', m)$ at multiplicity $\lambda$. Then there is no $(d,\lambda)$--,  $(d',\lambda)$-- or $(d,d',m,\lambda)$--gadget in $X$.
\end{lemma}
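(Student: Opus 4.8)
The plan is to show the contrapositive: if some gadget of one of the three listed types exists in $X$, then we can assemble it into a realization of the blueprint pair $(\Upsilon,\Upsilon')$, contradicting our assumption of failure. So I fix a $(d,d',m,\lambda)$--gadget $\gamma$ in $X$ (the cases of $(d,\lambda)$-- and $(d',\lambda)$--gadgets are handled the same way, using the convention from \Cref{sec.gadgets} that lets us regard those as degenerate instances with $|X_2|=1$ and $D_2=\emptyset$) with the associated partition $V(\gamma)=D_1\sqcup D_2\sqcup M\sqcup X_1\sqcup X_2\sqcup\bigsqcup_{v} P_v$, and I must build a family $F$ of maps $V_F\to V(\gamma)$ meeting conditions (1)--(6) of \Cref{def:realization}.

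The construction is essentially bookkeeping: the fixed vertices of $\zeta(\Upsilon,\Upsilon')$ must be sent injectively and consistently (across all $f\in F$) into $D_1$, $D_2$, or $M$ according to which of $\zeta(\Upsilon_1),\zeta(\Upsilon_2)$ they belong to — this is possible precisely because the sizes $|D_1|=d_1-m-1$, $|D_2|=d_2-m-1$, $|M|=m$ match the combinatorics of $\zeta(\Upsilon_1,\Upsilon_2)$ by \Cref{blueprints}. For each such fixed vertex $u$, its bundle $\pi_u$ (of size $|\pi_u|<\Delta^2$ by \Cref{K_partition}(v)) must be mapped into $P_{f(u)}$; here we need $|P_{f(u)}|=|\pi_u|-1$, which we can arrange because the gadget definition only bounds $|P_v|$ from above rather than fixing it (or, failing that, we pad/trim $P_v$ using spare vertices from the class $\cC_{\cC(v)}$, which is where the hypothesis $|\cC_t|\ge 2\Delta(\lambda-2+\Delta)$ enters — it guarantees each vertex class is large enough to supply these bundle vertices). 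The two free vertices $\iota(\Upsilon_1),\iota(\Upsilon_2)$ range over $X_1$ and $X_2$ respectively, and condition (5) demands that for \emph{every} pair $(w_1,w_2)\in X_1\times X_2$ there be a member of $F$ realizing it — so $F$ has one map per such pair, all agreeing on the fixed part. The bipartite-completeness clauses of \Cref{def:gadgets} are exactly what make each such $f$ a genuine graph homomorphism respecting the edges of $\overline N(\iota(\Upsilon_1))\cup\overline N(\iota(\Upsilon_2))$ (the dashed edges in \Cref{blueprint_example}, i.e.\ edges inside $\zeta$ or between the two free vertices' neighbourhoods, are taken care of by the bipartite structure between $D_1',X_1$, etc., and by the $M$--to--$X_i$ edges).

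The one genuinely structural point — rather than bookkeeping — is reconciling the free-vertex degree count: in $G$, $\iota(\Upsilon_1)$ has degree $d_1$ and $\iota(\Upsilon_2)$ degree $d_2$, and when we send them to $w_1\in X_1$, $w_2\in X_2$ the edges incident to them in the blueprint pair land on $D_1\cup M\cup\{w_2\}$ and $D_2\cup M\cup\{w_1\}$ respectively, which have the right cardinalities by the size constraints on $D_1,D_2,M$ plus the extra edge $w_1w_2$. The colour of the resulting embedded copy is then $c(D_1'w_1w_2D_2')$ shifted by the vertex-colour corrections $d_1\cdot\cC(w_1)+d_2\cdot\cC(w_2)$, which is precisely the quantity appearing in \cref{colourful}, so the lower bound $\lambda$ there translates into $|c(F)|\ge\lambda$ — but that last observation is not needed for \emph{this} lemma, only for later use of the realization.

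I expect the main obstacle to be purely notational: carefully matching up the index sets $\zeta(\Upsilon_1)\setminus\zeta(\Upsilon_2)$, $\zeta(\Upsilon_2)\setminus\zeta(\Upsilon_1)$, $\zeta(\Upsilon_1)\cap\zeta(\Upsilon_2)$ with $D_1,D_2,M$ and verifying all six clauses of \Cref{def:realization} hold simultaneously for the single family $F$ we build, while keeping the bundle-size condition (1) satisfied — which may force us to first shrink each $P_v$ (or enlarge it from the ambient class $\cC_{\cC(v)}$) to have exactly $|\pi_u|-1$ elements before defining $F$. None of this is deep, but it is the kind of verification that must be done with care, and it is the reason the hypothesis on the sizes $|\cC_t|$ is stated the way it is.
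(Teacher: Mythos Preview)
Your approach is correct and essentially matches the paper's proof: both argue by contradiction, fix a gadget $\gamma$, bijectively assign $\zeta(\Upsilon,\Upsilon')$ to $D_1\cup D_2\cup M$ according to which of $\zeta(\Upsilon_1),\zeta(\Upsilon_2)$ each fixed vertex lies in, use the hypothesis on $|\cC_t|$ to (re)build bundle sets $P'_v$ of size exactly $|\pi_u|-1$ disjoint from $V\cup X_1\cup X_2$ and from each other, and then take $F=\{f_{(w_1,w_2)}:(w_1,w_2)\in X_1\times X_2\}$. Your digressions about graph homomorphisms and the colour computation are not needed for this lemma (the definition of a realization imposes no edge-preservation condition beyond what clauses (1)--(6) state), but they do no harm.
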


The above lemma should be thought of as saying that if we fail to realize a blueprint, then it is because we failed to find a gadget satisfying \cref{colorful} in \Cref{def:gadgets}, and not because we could not satisfy \cref{Bundles_correct} in \Cref{def:realization} (recall that a gadget is allowed to have $P_v=\emptyset$ for all $v\in D_1\cup D_2\cup M$).

\begin{proof}
    Let us argue by contradiction by supposing that $\gamma$ is a $(d,\lambda)$--,  $(d',\lambda)$-- or $(d,d',m,\lambda)$--gadget in $X$ with vertex set $V(\gamma)=D_1\cup X_1\cup X_2\cup D_2 \cup M\cup (\bigcup_{v\in D_1\cup D_2\cup M}P_v)$, but nevertheless, there is no realization $F$ of $(\Upsilon,\Upsilon')$.

    First, define $V\coloneqq D_1\cup D_2\cup M$ and observe that
    \[|\zeta(\Upsilon, \Upsilon')\setminus \zeta(\Upsilon')|=|D_1|, \quad |\zeta(\Upsilon, \Upsilon')\setminus \zeta(\Upsilon)|=|D_2|, \quad \text{ and } \quad |\zeta(\Upsilon)\cap \zeta(\Upsilon')|=|M|,\]
    so there exist bijections between each of these three pairs of sets. Since $D_1, D_2$ and $M$ are pairwise disjoint, let the union of the above three bijections be the bijection $f:\zeta(\Upsilon, \Upsilon') \to V$.
    
    Then, for each $v\in V$, observe that $\cC_{\cC(v)}$ is non-empty, and therefore has size at least $2\Delta(\lambda-2+\Delta)\ge (2\lambda-4 +d+d')\cdot \kappa$. Then, it follows that
    \begin{align*}
        |\cC_{\cC(v)}\setminus (V\cup X_1\cup X_2)|&\geq (2\lambda-4 +d+d')\cdot \kappa - (2\lambda-2+d+d'-m)\geq (d+d'-m-2)\cdot (\kappa-1)+(2\lambda-2)\kappa\\&\geq (\kappa-1)|V|.
    \end{align*}
    
    Now, for each $v\in V$ 
    fix some $P'_v\subset \cC_{\cC(v)}\setminus (V\cup X_1\cup X_2)$ of size $|\pi_{f^{-1}(v)}|-1$ such that the sets $\{P'_v: v\in V\}$ are pairwise disjoint; this is possible since $|\pi_{f^{-1}(v)}|-1\leq \kappa - 1$ for each $v\in V$. Then define $\gamma'$ with vertex set $V\cup X_1\cup X_2\cup(\bigcup_{v\in V}P'_v)$ and $E(\gamma')=E(\gamma[V\cup X_1\cup X_2])$. Then $\gamma'$ is a gadget of the same type as $\gamma$.
    
    We now find a realization $F$ of $(\Upsilon,\Upsilon')$ using $\gamma'$. For each $u\in \zeta(\Upsilon,\Upsilon')$ we know that $|P'_{f(u)}|=|\pi_{u}|-1$, so there exists a bijection $f_u:\pi_{u}\setminus\{u\} \to P'_{f(u)}$. Then, for every pair $(x_1,x_2)\in X_1\times X_2$ define the map $f_{(x_1,x_2)}: V(\Upsilon,\Upsilon')\cup \left(\bigcup_{v\in \zeta(\Upsilon,\Upsilon')}\pi_v\right)\to V(\gamma')$ as
    $$f_{(x_1,x_2)}(u)=\begin{cases}
        f(u) & \text{if } u\in \zeta(\Upsilon,\Upsilon')
        \\ f_{u'}(u) & \text{if } u\in \pi_{u'} \text{ for some } u'\in\zeta(\Upsilon, \Upsilon')
        \\ x_1 & \text{if } u=\iota(\Upsilon)
        \\ x_2 & \text{if } u=\iota(\Upsilon')
    \end{cases}$$
and let $F=\{f_{(x_1,x_2)}:(x_1,x_2)\in X_1\times X_2\}$. It is now easy to see that $F$ satisfies all the conditions of \Cref{def:realization}, and so $F$ is a realization of $(\Upsilon,\Upsilon')$, thereby yielding a contradiction.
\end{proof}

\begin{lemma}\label{core_phase_arg}
    Suppose that $d, d'\leq\Delta$ and $m\leq \min(d, d')-1$ are positive integers such that $\gcd(d, d', n)=\kappa$ and that   
    $X\subset R_0$ is some pool of vertices. Let $c=c_1/H':E(R_0)\to \Gamma/H'$ (for some $H'\leqslant \Gamma$) be an edge coloring and $\cC:R_0\to \Gamma_0$ a vertex coloring that is constant on $X$. 
    
    If there are no $(d,\lambda)$--, $(d',\lambda)$-- or $(d,d',m,\lambda)$--gadgets under the colorings $c$ and $\cC$, then there exist sets $I\subset X$, $S=\{s_1,\ldots,s_{\lambda-1}\}\subset \Gamma$, $T\subset \Gamma$, and a vertex coloring $\cC':I\to T$ such that: 
    \begin{enumerate}
        \item $|I|\geq |X|/(\lambda-1)^{d+d'} - d-d'$;
        \item for all $t\in T$, we have $\kappa t \in H'$; and
        \item for all $x,y\in I$ distinct, there exists some $s\in S$ such that $c(xy)=s+\cC'(x)+\cC'(y)+H'$.
    \end{enumerate}  
\end{lemma}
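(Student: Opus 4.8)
\textbf{Proof plan for \Cref{core_phase_arg}.}

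The plan is to extract from the hypothesis (no gadget of the relevant type) a set $I$ on which the edge colouring $c$ has the structured form $c(xy) = s + \cC'(x) + \cC'(y) + H'$ for some $s$ in a small set $S$, by a sequence of pigeonhole-type refinements. First I would set up the right auxiliary structure: for each ordered pair of potential ``fixed-vertex roles'' coming from the blueprint pair of type $(d,d',m)$, define for a vertex $x \in X$ a profile recording, for a fixed reference tuple of other vertices playing those roles, the quantity $c(D_1'xD_2') + (\text{degree terms})$ that appears in \cref{colourful}. The absence of a $(d,d',m,\lambda)$--gadget (and of the degenerate $(d,\lambda)$-- and $(d',\lambda)$--gadgets, which correspond to $|X_2|=1$) says precisely that no choice of $D_1, D_2, M, X_1, X_2$ with $|X_1|,|X_2| \le \lambda$ achieves $\lambda$ distinct values of this expression; equivalently, for any fixed choice of $D_1, D_2, M$ and any candidate set for $X_1 \cup X_2$, the number of distinct values of the relevant sum, as the free endpoints vary, is at most $\lambda - 1$.

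The key steps, in order, are: (1) Greedily build the fixed-vertex ``scaffold'': pick representative vertices to play the roles of $D_1, D_2, M$ (this costs at most $d + d'$ vertices removed from $X$, and since $\cC$ is constant on $X$, condition 2 of the statement will eventually come from controlling $\kappa$ times the new colours using $\gcd(d,d',n) = \kappa$). (2) For each of the $\le d + d'$ scaffold vertices $v$, partition the remaining vertices of $X$ according to the value of $c(\cdot\, v) + H'$ (or the appropriate combination entering \cref{colourful}); since no gadget exists, each such partition has at most $\lambda - 1$ parts, so passing to the largest common refinement over all $\le d+d'$ scaffold vertices keeps a set of size $\ge |X|/(\lambda-1)^{d+d'}$ — after first deleting the $\le d+d'$ scaffold vertices this gives the bound $|I| \ge |X|/(\lambda-1)^{d+d'} - d - d'$ in item 1. (3) On this refined set $I$, argue that the edge colouring between any two vertices $x,y \in I$ is determined by the pair of parts they fall into: the value $c(xy) + H'$ can be read off from the ``colours'' $\cC'(x), \cC'(y)$ recording which part each lies in, because swapping $x$ for another vertex in its part, or $y$ for another in its part, does not change the relevant sum (again by the no-gadget hypothesis applied with $x$ or $y$ itself playing a free-vertex role). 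This forces $c(xy) = s(\cC'(x),\cC'(y)) + H'$ for some function $s$ of the two part-labels; a further standard argument (fixing one part as a basepoint and re-centering $\cC'$) collapses $s$ into the form $s_0 + \cC'(x) + \cC'(y)$ with $s_0$ ranging over a set $S$ of size at most $\lambda - 1$, giving item 3. (4) Finally verify item 2: the colours $t \in T$ are differences of values of $c$ along configurations involving $d$ or $d'$ edges at a vertex, so $\kappa t \in H'$ follows from $\kappa = \gcd(d, d', n)$ together with the fact that $c$ takes values in $\Gamma/H'$ and $|\Gamma_0| = n$ annihilates everything — i.e. $dt, d't \in H'$, hence $\gcd(d,d')t \in H'$, hence $\kappa t \in H'$ after incorporating the $n$.

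The main obstacle I expect is step (3): turning ``the edge colour depends only on the two part-labels'' into the clean additive form $s + \cC'(x) + \cC'(y)$ with $|S| \le \lambda - 1$, rather than merely an arbitrary symmetric function of the labels. This requires carefully choosing how to define $\cC'$ (not just as a raw part-index, but as an actual group element, probably via $\cC'(x) = c(x v_0) - (\text{basepoint correction})$ for a fixed basepoint vertex $v_0$) and then checking that the triangle relations forced by applying the no-gadget hypothesis to triples of vertices in $I$ make the cocycle $s(\cdot,\cdot)$ a coboundary plus a constant. One must be attentive to the quotient by $H'$ throughout, and to the bookkeeping that ties the count $\lambda - 1$ (the maximum size of a gadget-free value set) to the number of parts at each stage and hence to both the exponent $d + d'$ in item 1 and the size of $S$ in item 3. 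The degree and $\kappa$-divisibility conditions are what make the degree terms $d \cdot \cC(v_1) + d' \cdot \cC(v_2)$ in \cref{colourful} harmless modulo $H'$, so these need to be threaded through carefully as well.
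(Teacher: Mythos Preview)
Your overall strategy matches the paper's --- fix a scaffold $U\cup V$ with $|U|=d-1$, $|V|=d'-1$, $|U\cap V|=m$, pigeonhole to extract $I$, define $\cC'$ via edge colours to a reference vertex, use B\'ezout for item 2, and use the no-$(d,d',m,\lambda)$-gadget hypothesis for item 3 --- but steps (2) and (3) as written conflate two different things and this would derail the write-up.

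In step (2), partitioning by single-edge colours $c(\cdot\, v)$ does \emph{not} give at most $\lambda-1$ parts: the no-gadget hypothesis bounds the number of values of a \emph{star} such as $c(Uw'v_i)$ or $c(u_jw'V)$, not of a single edge. After pigeonholing correctly on $d+d'-2$ such star values, plus two more stars $c(Uw'w)$ and $c(ww'V)$ through a further reference vertex $w$, every one of these star values is constant on $I$ --- so all of $I$ lies in a \emph{single} ``part'' and your proposed $\cC'$ recording ``which part $x$ lies in'' is trivial. The paper instead sets $\cC'(x)$ to be (a representative of) $c(xw)-c(wv_0)$ for a fixed basepoint $v_0\in I$; this genuinely varies over $I$ because individual edge colours are not constant, only stars are. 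The step your plan omits is why $\kappa\cC'(x)\in H'$: one shows that both $d\cdot c(ww')$ and $d'\cdot c(ww')$ are \emph{constants} in $\Gamma/H'$ as $w'$ ranges over $I$, by an algebraic identity expressing $c(ww')$ in terms of the fixed star values $g_1=c(Uw'w)$, $q_1=c(ww'V)$, $q_0=\sum_j c(u_jw'V)$ (and symmetrically $g_0$); B\'ezout on $d,d',n$ then yields $\kappa\cdot(c(xw)-c(wv_0))\in H'$. With $\cC'$ so defined, the relation $c(UxyV)=c(xy)+g_1+q_1-c(xw)-c(yw)$ rewrites $c(xy)$ as $\cC'(x)+\cC'(y)$ plus a term depending only on $c(UxyV)$, which takes at most $\lambda-1$ values by the no-$(d,d',m,\lambda)$-gadget assumption; this gives item 3 with $|S|\le\lambda-1$.
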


\begin{proof}
First, if $|X|\leq d+d'$ then setting $I=T=\emptyset$ the lemma is vacuously true. Otherwise, fix sets $U=\{u_1,\ldots,u_{d-1}\}$ and $V=\{v_1,\ldots,v_{d'-1}\}$ in $X$ so that $M\coloneqq U\cap V =\{u_1,\ldots,u_{m}\}=\{v_1,\ldots,v_{m}\}$. Since $X$ does not contain a $(d', \lambda)$--gadget, it follows that for each $1\leq i<d'$ we have
\[|\{c(Uw'v_i):w'\in X\setminus (U\cup V)\}|\leq \lambda-1.\]
Denote by $g_0^i$ the most popular color in the set above, so that there exist at least $(|X|-(d+d'-m-2))/(\lambda-1)$ vertices $w'\in X\setminus (U\cup V)$ such that $c(Uw'v_i)=g_0^i$.

Similarly, note that for each $1\leq j<d$
\[|\{c(u_jw'V):w'\in X\setminus (U\cup V)\}|\leq \lambda-1\]
holds and analogously define $q_0^j$ to be the most popular color in the above set.

Then there exists a set of vertices $I_1\subset X$ with $|I_1|\geq (|X|-(d+d'-m-2))/(\lambda-1)^{d+d'-2}$ such that for all $w'\in I_1$, $i \in [d'-1]$ and $j \in [d-1]$, we have $c(Uw'v_i)=g_0^i$ and $c(u_jw'V)=q_0^j$. 

Further, let us fix a vertex $w\in I_1$ and observe that $c(Uw'w)$ and $c(ww'V)$ both take at most $\lambda-1$ colors across all $w' \in I_1\setminus \{w\}$, and so we can again restrict to the most popular colors (denoted by $g_1$ and $q_1$, respectively) to obtain a set $I\subset I_1\setminus\{w\}$ of size
\[|I|\geq \left(|X|-(d+d'-m-2)\right)/(\lambda-1)^{d+d'}-1\geq \left(|X|/(\lambda-1)^{d+d'}\right) -d-d'\] 
such that for all $w'\in I$, $c(Uw'w)=g_1$ and $c(ww'V)=q_1$. Further, let us define $$g_0\coloneqq \sum_{i< d'}g_0^i = c(w'V) + (d'-1)\cdot c(Uw')$$ and $$q_0\coloneqq \sum_{j< d} q_0^j = c(w'U) + (d-1)\cdot c(w'V).$$

Then observe that for all $w'\in I$, we have
$$
\begin{aligned}
c(ww')&=c(ww') +c(w'U) - c(w'U) - (d-1)\cdot c(w'V)+(d-1)\cdot c(w'V)\\
&=g_1-q_0+(d-1)\cdot c(w'V)\\
&=g_1-q_0+(d-1)\cdot(q_1-c(ww'))\\
&= g_1-q_0+(d-1)\cdot q_1-(d-1)\cdot c(ww')
\end{aligned}$$
and thus $$d\cdot c(ww')=g_1-q_0+(d-1)\cdot q_1,$$
which is a constant in $\Gamma/H'$ independent of the choice of $w'$. Similarly, we also get that $$d'\cdot c(ww') =q_1-g_0+(d'-1)\cdot g_1$$
which is again independent of $w'$.

Then, for distinct $w',w''\in I$ we have that $d\cdot (c(ww')-c(ww''))=d'\cdot (c(ww')-c(ww''))=H'$ (recalling that $c=c_1/H'$ takes values in $\Gamma/H'$). Now, since $\gcd(d,d',n)=\kappa$ it follows by B\'ezout's identity that \[\kappa\cdot c(ww')=\kappa\cdot c(ww'')\]
for all $w', w''\in I$.

Next, fixing some $v_0\in I$, we define for all $x\in I$ the new vertex coloring $\cC':I\to\Gamma$ by letting $\cC'(x)$ be an arbitrary representative of the coset $c(xw)-c(wv_0)\in \Gamma/H'$, and note that $\kappa \cdot \cC'(x)\in H'$ for all $x\in I$. We then define $T\coloneqq \{\cC'(x):x\in I\}\subset \Gamma$ and note it satisfies the required properties.

We now only have left to define $S$ and ensure that it and $\cC'$ satisfy the third condition in the lemma. We observe that $c(UxyV)=c(xy)+q_1+g_1-c(xw)-c(yw)$, for all $x\neq y\in I$ and so
\[c(xy)=c(UxyV) -q_1-g_1+\cC'(x)+\cC'(y)+2c(v_0w)+H'.\] 
Now, by our assumption that a $(d,d',m,\lambda)$--gadget does not exist, we see that $c(UxyV)$ can take at most $\lambda-1$ colors as $x, y$ range in $I$; call this set of attained colors $S'$ and extend it so it has cardinality exactly $\lambda-1$, i.e. we have $S'=\{s'_1, \dots, s'_{\lambda-1}\}\subset \Gamma/H'$.

Letting $\hat{s}= 2c(v_0w)-q_1-g_1\in \Gamma/H'$ constant, we then have that $c(xy)=\hat{s}+c(UxyV)+\cC'(x)+\cC'(y)$. Then for each $i\in [\lambda-1]$ take some arbitrary coset representative $s_i$ of $s_i' + \hat{s}$ (we recall that $s_i', \hat{s}\in \Gamma/H'$) and define $S=\{s_1,\ldots,s_{\lambda-1}\}\subset \Gamma$ and conclude that for all $x\neq y\in I$ there is some $s_i\in S$ with $s_i+H'=\hat{s}+c(UxyV)$ and so  $c(xy)=s_i+\cC'(x)+\cC'(y)+H'$.
\end{proof}

\subsection{If Phase 1 fails}\label{phase_1}

Suppose in Phase 0 (\Cref{phase0}) we chose a $\kappa$--well-behaved tuple $(R,\Gamma,T_0,\cC_0,s)$ with size $\sigma=|\Gamma|\geq n/\alpha$, and so we ran \nameref{sub1} with parameters $(R_1,\Gamma,c,0_{id})$, where $\cC_0$ is identically equal to $0=0_{\Gamma_0}$ on $R_1$. Suppose our algorithm gets stuck on some iteration that is still in Phase 1, i.e. we have reached some vertex set $X\subseteq R_i$ for some $i$ and quotient group $\Gamma/H'_i$ and cannot realize a blueprint pair of type $(d,d',m)$. As $\cC_0$ is constant on $X$ and by \Cref{vertex_count_thm} $|X|>8\Delta^2n>2\Delta(\lambda-2+\Delta)$, so by \Cref{prelim_phase_arg}, there is no $(d,\beta)$--, $(d',\beta)$--, or $(d,d',m,\beta)$--gadget in the vertex set $X$ under the edge coloring $c_i= c_1/H'_i=(c_0-s)/H'_i$ and constant vertex coloring $\cC\equiv 0_{\Gamma_0}$. For ease of notation, in this subsection we drop the subscripts and refer to $H_i'$ and $c_i$ as $H'$ and $c$ respectively.

Then because the vertex coloring in Phase 1 is identically equal to $0_{\Gamma_0}$, and all blueprint pairs in $K$ satisfy $\gcd(d, d', n)=\kappa$ (\Cref{K_partition}), we may apply \Cref{core_phase_arg} to find sets $I\subset X$, $S=\{s_1,\ldots,s_{\beta-1}\}\subset \Gamma$, $T\subset \Gamma$, and a map $\cC:I\to T$ such that: 
    \begin{enumerate}
        \item $|I|\geq |X|/(\beta-1)^{d+d'} - d-d'$;
        \item for all $t\in T$, we have $\kappa t \in H'$; and
        \item for all $x,y\in I$ distinct, there exists some $s_i\in S$ such that $c(xy)=s_i+\cC(x)+\cC(y) + H'$.
    \end{enumerate}
    Moreover, arguing as in \Cref{T_size}, we may further assume that $|T|\leq |\Gamma/H'|$.

Finally, define $S_0=\{s_i-s_1:s_i\in S\}$ and $H''=H' + \langle S_0\rangle$.
\begin{claim*}$(I,H'',T,\cC,s+s_1)$ is $\kappa$--well-behaved and has size $|H''|<\sigma$.
\end{claim*}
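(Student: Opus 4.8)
The plan is to verify the three conditions in the definition of $\kappa$--well-behaved for the tuple $(I,H'',T,\cC,s+s_1)$, and then deal with the (more delicate) size claim $|H''|<\sigma$ separately. For condition \ref{C_correct}, I would start from the third property of $\cC$ obtained via \Cref{core_phase_arg}: for distinct $x,y\in I$ there is some $s_i\in S$ with $c(xy)=s_i+\cC(x)+\cC(y)+H'$. Since $c=c_1/H'=(c_0-s)/H'$, this reads $c_0(xy)-s \equiv s_i + \cC(x)+\cC(y) \pmod{H'}$, i.e. $c_0(xy) \equiv (s+s_1) + \cC(x)+\cC(y) + (s_i-s_1) \pmod{H'}$. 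Now $s_i-s_1\in S_0\subseteq H''$ and $H'\subseteq H''$, so modulo $H''$ the term $s_i-s_1$ disappears and we get $c_0(xy)+H'' = (s+s_1)+\cC(x)+\cC(y)+H''$ for all distinct $x,y\in I$; the diagonal case $x=y$ is not needed since $E(R_0)$ has no loops, but if one wants it for all pairs in $E(I)$ it is automatic as $I$ is treated as a complete graph on distinct vertices. Condition \ref{T_order}, that $\kappa t\in H''$ for all $t\in T$, is immediate from property 2 of \Cref{core_phase_arg} ($\kappa t\in H'\subseteq H''$). For condition \ref{R_size}, since we are in Phase 1 we have $|\Gamma|\ge n/\alpha$, and I would bound $|I|$ from below using property 1: $|I|\ge |X|/(\beta-1)^{d+d'}-d-d' \ge |X|/\beta^{2\Delta}-2\Delta$, then plug in the lower bound $|X|\ge |R_i|\ge 8\Delta^2 n$ from \Cref{vertex_count_thm} together with the fact that $|R_0|$ was chosen with a generous factor; one then checks that $|I|$ exceeds the required threshold in \Cref{well-behaved}, keeping in mind that the threshold for $(I,H'')$ depends on whether $|H''|\ge n/\alpha$ or not, so both sub-cases of the bound must be accommodated. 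The bookkeeping here is routine given how loosely the constants were chosen, mirroring the computations already done in \Cref{vertex_count_thm}.

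The heart of the matter is the inequality $|H''|<\sigma = |\Gamma|$. The idea is that if $|H''|=|\Gamma|$, i.e. $H''=\Gamma$ (as $H''\leqslant\Gamma$), then the colouring restricted to $I$ would already let us generate a coset of all of $\Gamma$ via the blueprint pairs we are realizing — contradicting the fact that the algorithm got stuck. Concretely, if $H''=\Gamma$ then $\langle S_0\rangle + H' = \Gamma$, so the set $S + H'$ spans, in the sense of iterated sumsets, a coset of $\Gamma/H'$ inside $\Gamma/H'$: adding translates of $\{s_1-s_1,\dots,s_{\beta-1}-s_1\}$ repeatedly fills out $\langle S_0\rangle/(H'\cap\langle S_0\rangle)$. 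But the set of edge-colours $c(xy)$ for $x,y\in I$ contains, for a suitable choice of the fixed vertices $U,V$ of the relevant blueprint pair, all of $s_i+\cC(x)+\cC(y)+H'$; chaining several such realized blueprint pairs together (as in the algorithm) and invoking \Cref{Kneser} would then produce a coset of a nontrivial subgroup — in fact eventually all of $\Gamma/H'$ — inside $c(F)$, meaning \nameref{Amain} would not have gotten stuck at this step. So the contradiction with the algorithm being stuck forces $H''\subsetneq\Gamma$, hence $|H''|<\sigma$. I would phrase this carefully: being "stuck" means we cannot realize the blueprint pair, which by \Cref{prelim_phase_arg} means there is no $(d,\beta)$--, $(d',\beta)$--, or $(d,d',m,\beta)$--gadget, which is exactly the hypothesis of \Cref{core_phase_arg} that produced $S,T,\cC$; and the minimality of $\sigma$ chosen in Phase 0 says no $\kappa$--well-behaved tuple of size $<|\Gamma|$ exists. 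So the cleanest route is: we have just shown $(I,H'',T,\cC,s+s_1)$ is $\kappa$--well-behaved, and it has size $|H''|$; if $|H''|<\sigma$ we are done with the claim, and if $|H''|\ge\sigma$ then since $H''\leqslant\Gamma$ and $\sigma=|\Gamma|$ we must have $H''=\Gamma$, which I will show is impossible.

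To rule out $H''=\Gamma$: suppose it holds. Then $\langle S_0\rangle$ together with $H'$ generates $\Gamma$. The point is that $|S_0|\le\beta-1$, so $\langle S_0\rangle$ is generated by at most $\beta-1$ elements each of which, together with $H'$, is realized as a difference of two edge-colours within $I$. Since we are in Phase 1 and $\beta$ is the multiplicity, each blueprint pair realized at multiplicity $\beta$ contributes (via \cref{colourful} and the translation identity $c(xy)=s_i+\cC(x)+\cC(y)+H'$) a set of $\beta$ distinct values in $c(F_j)$ differing pairwise by elements of $S_0+H'$; by \Cref{Kneser}, $O(\log|\Gamma|)$ such pairs suffice for the iterated sumset to contain a coset of a nontrivial subgroup of $\Gamma/H'$, at which point \nameref{Amain} would have terminated successfully at or before this step. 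This contradicts the assumption that the algorithm got stuck here. (Alternatively, and more in the spirit of the paper's own argument, one simply observes that if $H''=\Gamma$ then already after the single realization attempt we can — via $S_0$ and \Cref{sumset}-type reasoning applied inside $\Gamma/H'$ — certify that the next realized gadget would push $c(F)$ to contain a full coset, contradicting "stuck".) The main obstacle, and the step I'd spend the most care on, is making this last implication airtight: precisely relating the abstract subgroup $H''=H'+\langle S_0\rangle$ to the concrete sumset $c(F)$ that the algorithm tracks, and ensuring the "stuck" hypothesis is used correctly rather than circularly. Everything else — conditions \ref{C_correct}, \ref{T_order}, the vertex-count inequality \ref{R_size}, and the assumption $|T|\le|\Gamma/H'|$ — is either immediate from \Cref{core_phase_arg} or a loose constant chase of the kind already carried out in \Cref{vertex_count_thm}.
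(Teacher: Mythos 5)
Conditions \ref{C_correct} and \ref{T_order} you handle essentially as the paper does, and that part is fine. But the core of your argument, ruling out $H''=\Gamma$, does not work, and the overall strategy misses the real content of the paper's proof.

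The fatal issue is that you try to derive a contradiction from ``$H''=\Gamma$ implies the algorithm could not be stuck,'' but ``stuck'' in \nameref{Amain} means \emph{we cannot find a realization of the blueprint pair}, i.e.~no $(d,\beta)$--, $(d',\beta)$-- or $(d,d',m,\beta)$--gadget exists in $X$ (\Cref{prelim_phase_arg}). Your argument that ``chaining several such realized blueprint pairs together and invoking \Cref{Kneser} would then produce a coset inside $c(F)$'' presupposes exactly what is denied: you need to realize more gadgets to grow $c(F)$, and the hypothesis is that you cannot. Knowing $H''=H'+\langle S_0\rangle = \Gamma$ is a purely algebraic statement about the group and the offsets $s_i$; it says nothing about the existence of a gadget in the remaining pool of vertices. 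Moreover, \Cref{core_phase_arg} only guarantees $|S|\le\beta-1<\beta$ distinct offsets, so even if all offsets in $S$ were attainable as star-sums, you would at best see a $(\beta-1)$-multiplicity gadget, which is not enough to contradict being stuck at multiplicity $\beta$.

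In fact, the paper does \emph{not} prove $|H''|<\sigma$ unconditionally, and I believe it cannot be proved unconditionally from the stuck hypothesis alone. What the paper actually does is a dichotomy on the number of elements of order $\kappa$ in $\Gamma/H'$. If there are more than $\kappa^{2\beta}$ such elements, then \Cref{algebra2} bounds $|\langle S_0\rangle|$ because $|S_0|\le\beta-1$ is a constant, giving $|H''|\le \alpha\cdot n\cdot\kappa^{-2\alpha-1}<|\Gamma|/2<\sigma$, and simultaneously furnishing the bound needed for \cref{R_size} (these two are coupled: the threshold in \cref{R_size} depends on whether $|H''|\ge n/\alpha$, and one needs the smallness of $|H''|$ to know which sub-case applies --- a point your sketch of \cref{R_size} glosses over). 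If instead there are at most $\kappa^{2\beta}$ such elements, then $|T|\le\kappa^{2\beta}$ is a constant, so the largest colour class $\cC_{\text{max}}$ has size $\Omega(|I|)$ and $c_1$ takes at most $\alpha\beta$ colours on it; then the Chv\'atal--R\'odl--Szemer\'edi--Trotter theorem (\Cref{ramseylinear}) yields a monochromatic copy of $G$ in $\cC_{\text{max}}$, contradicting the ``no monochromatic copy'' assumption made at the outset. So in the second case the stated claim is not established at all --- a \emph{different} contradiction is reached, which is all the surrounding argument needs. Your plan, attacking $|H''|<\sigma$ directly and hoping to finish with a loose constant chase, is missing both of these ideas: the algebraic structure lemma (\Cref{algebra2}) that makes the bound possible in the favourable case, and the appeal to classical multicolour Ramsey in the unfavourable case. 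This is a genuine gap, not just an alternative route.
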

It is clear that this claim leads to a contradiction, so that, in fact, our algorithm never fails in Phase 1.

\begin{subproof}[Proof of claim]
Recall the definition of a $\kappa$--well-behaved tuple (\Cref{well-behaved}). We have already shown \cref{C_correct} since for all $x\neq y\in I$ we have $c_0(xy)+H''=s+c_1(xy)+H''=s+s_i-s_1+s_1+\cC(x)+\cC(y)+\langle S_0\rangle +H'=s+s_1+\cC(x)+\cC(y)+H''$.
We also have \cref{T_order} because we have $\kappa \cC(x)\in H'\subset H''$ for all $x\in I$.

It remains to establish \cref{R_size} and that $|H''|<|\Gamma|$.

For this we want to argue as follows: either $\Gamma_0$ has at most a constant number of elements of order $\kappa$ (and so $|T|$ is bounded by some constant), or, otherwise, $|\langle S_0\rangle|$ is bounded from above by some constant.
For this, we need a particular algebraic result regarding the structure of $\Gamma_0$. Intuitively, if $\Gamma_0\cong \mathbb{Z}_\kappa \times \cdots  \times \mathbb{Z}_\kappa$, then $\Gamma_0$ has $n$ elements of order $\kappa$ but $|\langle S_0\rangle |\leq \kappa^{|S_0|}=\kappa^{\beta-1}$. On the other hand, if instead $\Gamma_0\cong \mathbb{Z}_{\kappa^t}$, then it's possible that $|\langle S_0\rangle|=n$ but there are only $\kappa$ elements of order $\kappa$. The precise result is stated formally below; we give its proof in \Cref{sec.pfalg}.

\begin{lemma}\label{algebra2}
    Let $\Gamma$ be a finite abelian group with $\cK$ elements of order $\kappa$. Let $X\subset \Gamma$ such that $|X|=x$. Then $|\langle X \rangle|\leq |\Gamma|\cdot \frac{\kappa^{x}}{\cK}$.
\end{lemma}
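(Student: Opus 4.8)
### Proof plan for Lemma \ref{algebra2}

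\textbf{The plan is to} reduce to the case where $\Gamma$ is a $\kappa$-group — more precisely, to replace $\Gamma$ by the subgroup generated by $X$ and then analyse it via the primary decomposition of $\kappa$. Write $\kappa = \prod_j p_j^{a_j}$ for the prime factorisation. By the fundamental theorem of finite abelian groups (\Cref{thm:bascigroupthm}), $\Gamma$ decomposes as a product over all primes dividing $|\Gamma|$ of its Sylow subgroups, and both the count $\cK$ of elements of order $\kappa$ and the quantity $\kappa^x/\cK$ factor multiplicatively over the primes $p_j \mid \kappa$ (the primes not dividing $\kappa$ contribute a factor of $1$ to $\cK$, since an element has order dividing $\kappa$ iff each of its primary components does, and contribute their full Sylow size to $|\Gamma|$, while also $\langle X\rangle$ projects into each Sylow factor). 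So after this reduction it suffices to prove the bound one prime at a time, i.e. to assume $\kappa = p^a$ is a prime power and $\Gamma$ is a finite abelian $p$-group.

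\textbf{Next I would} invoke the invariant factor (or rather primary) decomposition $\Gamma \cong \mathbb{Z}_{p^{\beta_1}} \times \cdots \times \mathbb{Z}_{p^{\beta_\ell}}$ with $\beta_1 \ge \cdots \ge \beta_\ell \ge 1$. Here the number of elements $g$ with $\kappa g = p^a g = 0$ is exactly $\prod_{i=1}^\ell p^{\min(a,\beta_i)}$, so $\cK = \prod_i p^{\min(a,\beta_i)}$. On the other hand $|\Gamma| = \prod_i p^{\beta_i}$, so
\[
\frac{|\Gamma|}{\cK} = \prod_{i=1}^\ell p^{\beta_i - \min(a,\beta_i)} = \prod_{i=1}^\ell p^{\max(\beta_i - a, 0)}.
\]
The claimed inequality $|\langle X\rangle| \le |\Gamma| \cdot \kappa^x/\cK = \kappa^x \cdot \prod_i p^{\max(\beta_i-a,0)}$ is then what we must establish for $\langle X \rangle$ with $|X| = x$; of course the worst case is $\langle X\rangle = \Gamma$, so it is enough to show that \emph{any} $p$-group generated by $x$ elements satisfies $|\Gamma| \le \kappa^x \cdot \prod_i p^{\max(\beta_i-a,0)}$, where the product is over the invariant factors of $\Gamma$ itself.

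\textbf{The key step} is then the following: if $\Gamma \cong \mathbb{Z}_{p^{\beta_1}} \times \cdots \times \mathbb{Z}_{p^{\beta_\ell}}$ can be generated by $x$ elements, then $\ell \le x$ (the minimal number of generators of a finite abelian $p$-group equals the rank $\ell$, which one sees by passing to $\Gamma/p\Gamma \cong \mathbb{F}_p^\ell$ — a set generating $\Gamma$ must generate this $\mathbb{F}_p$-vector space, forcing $x \ge \ell$). Given $\ell \le x$, we bound
\[
|\Gamma| = \prod_{i=1}^\ell p^{\beta_i} = \prod_{i=1}^\ell p^{\min(a,\beta_i)} \cdot \prod_{i=1}^\ell p^{\max(\beta_i - a,0)} \le \prod_{i=1}^\ell p^{a} \cdot \prod_{i=1}^\ell p^{\max(\beta_i-a,0)} = \kappa^\ell \cdot \prod_{i=1}^\ell p^{\max(\beta_i-a,0)} \le \kappa^{x} \cdot \prod_{i=1}^\ell p^{\max(\beta_i-a,0)},
\]
using $\min(a,\beta_i) \le a$ and $\kappa = p^a \ge 1$ so that raising to the larger exponent $x \ge \ell$ only increases the bound. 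Since $\prod_{i} p^{\max(\beta_i-a,0)} = |\Gamma|/\cK$ as computed above, this is exactly $|\Gamma| \le \kappa^x \cdot |\Gamma|/\cK$, i.e. the desired inequality for $\langle X\rangle = \Gamma$; for general $\langle X \rangle \le \Gamma$ we apply the same argument to $\langle X\rangle$ in place of $\Gamma$, noting that its number of order-$\kappa$ elements is at least... — here one must be slightly careful: $\cK$ in the statement refers to elements of order $\kappa$ in $\Gamma$, not in $\langle X\rangle$.

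\textbf{I expect the main obstacle} to be precisely this last point — relating the order-$\kappa$ count of the subgroup $\langle X\rangle$ to that of the ambient $\Gamma$. The clean way around it is to observe that the whole argument above in fact shows directly that \emph{any} finite abelian group $\Gamma'$ generated by $x$ elements satisfies $|\Gamma'| \le \kappa^x \cdot |\Gamma'|/\cK'$ where $\cK'$ is the number of order-$\kappa$ elements \emph{of $\Gamma'$}; and then one checks that $\cK' \le \cK$ when $\Gamma' \le \Gamma$ (every element of order $\kappa$ in the subgroup has order $\kappa$ in $\Gamma$), which gives $|\langle X\rangle| \le \kappa^x \cdot |\langle X\rangle|/\cK' \le \kappa^x \cdot |\langle X\rangle|/\cK \cdot (\cK/\cK')$... — no: the inequality goes the wrong way. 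The correct fix is simpler: we want an \emph{upper} bound on $|\langle X\rangle|$, and $|\langle X \rangle| \le \kappa^x \cdot |\langle X\rangle| / \cK'$ is vacuous. Instead, run the multiplicative-over-primes reduction on $\langle X\rangle$, and for each prime $p \mid \kappa$ use that the $p$-rank of $\langle X\rangle$ is at most $x$ to get $|\langle X\rangle_p| \le p^{a \cdot x} \cdot \prod_i p^{\max(\beta_i' - a, 0)}$ where $\beta_i'$ are the invariant factors of the $p$-part $\langle X\rangle_p$; then bound $\prod_i p^{\max(\beta_i'-a,0)} \le \prod_j p^{\max(\beta_j - a, 0)} = |\Gamma_p|/\cK_p$ using that $\langle X\rangle_p \le \Gamma_p$ (a subgroup of an abelian $p$-group has its sorted invariant-factor exponents dominated by those of the group, hence the same for the truncated exponents $\max(\cdot - a, 0)$). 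Multiplying over $p \mid \kappa$ and noting primes not dividing $\kappa$ contribute trivially to $\cK$ yields $|\langle X\rangle| \le \kappa^x \cdot |\Gamma|/\cK$, as required. The monotonicity of invariant factors under taking subgroups of abelian $p$-groups is the one genuinely non-trivial ingredient, and I would either cite it or prove it quickly via counting elements killed by $p^k$ (for a subgroup $\Gamma' \le \Gamma$ and every $k$, $|\Gamma'[p^k]| \le |\Gamma[p^k]|$, which encodes exactly the domination of sorted exponents).
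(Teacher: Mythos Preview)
Your approach is essentially the same as the paper's: both reduce prime by prime via the Sylow decomposition, use that the $p$-rank of $\langle \psi_p(X)\rangle$ is at most $x$ (the paper's Fact~\ref{fact:alg2}, your $\Gamma/p\Gamma \cong \mathbb{F}_p^\ell$ argument), and then invoke invariant-factor domination for subgroups of abelian $p$-groups (the paper's Fact~\ref{fact:alg1}). Your splitting $\beta_i = \min(a,\beta_i) + \max(\beta_i - a,0)$ packages the final computation more cleanly than the paper's explicit case analysis over $i_p$ and $S'$, but the content is identical.

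One small point to tighten: your proposed proof of invariant-factor domination via the inequalities $|\Gamma'[p^k]| \le |\Gamma[p^k]|$ does not by itself yield entrywise domination of sorted exponents (e.g.\ the partitions $(3,1)$ and $(2,2)$ satisfy all such inequalities in one direction without being comparable entrywise). Fortunately you do not need full domination: the only inequality you actually use is $\sum_i \max(\beta_i' - a, 0) \le \sum_j \max(\beta_j - a, 0)$, which is simply $|p^a \langle X\rangle_p| \le |p^a \Gamma_p|$ and follows immediately from $p^a\langle X\rangle_p \subseteq p^a \Gamma_p$. With that replacement (or by citing domination as a black box, as the paper does), your argument is complete. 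You should also trim the false starts in the middle --- the attempt to reduce to $\langle X\rangle = \Gamma$ and the ``vacuous'' inequality detour --- and present only the final, correct route.
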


First suppose that there are strictly more than $\kappa^{2\beta}$ elements of order $\kappa$ in $\Gamma/H'$ (note this implies that $\kappa>1$ since there is only one element of order $1$). Since we are still in Phase 1, we know that $|\Gamma/H'|\geq n/\alpha$, and so $|H'|\leq\alpha$. By \Cref{algebra2}, it holds that 
\beq{bdH''}|H''|\leq |H'|\cdot |\langle S_0\rangle|\leq \alpha \cdot n\cdot \kappa^{\beta-1-2\beta} \leq \alpha \cdot n\cdot \kappa^{-2\alpha-1} \leq n\cdot (2^{-\alpha})^2\cdot \alpha/2 <  n/(2\alpha) \le |\Gamma|/2.\enq

Since $|H''|\leq n/\alpha$, to establish \cref{R_size} in \Cref{well-behaved} we will show that \[|I|>|R_0|/(\alpha\beta^{2\Delta}) - 14\Delta^3 (n-|H''|).\]
By \Cref{vertex_step} and  \Cref{vertex_jump} we have 
\begin{align*}
    |I|\geq |X|/(\beta-1)^{d_0+d_1} -d -d'
    > \big(|R_1|-7\Delta^3|\Gamma|(1-1/|H'|) -7\Delta^3(|\Gamma/H'|)\big)/\beta^{2\Delta} = (|R_1|-7\Delta^3|\Gamma|)/\beta^{2\Delta} .
\end{align*}
By \Cref{R'R_size} we have $|R_1| >|R|/\alpha$, so we can write 
\begin{align*}
    |I|& \ge (|R|/\alpha - 7\Delta^3|\Gamma|)/\beta^{2\Delta}= (|R| - 7\Delta^3\alpha|\Gamma|)/\alpha\beta^{2\Delta},
\end{align*}
and by \cref{R_size} in \Cref{well-behaved} we have
\begin{align*}
    |I| &>(|R_0| -14\Delta^3 (n-|\Gamma|) - 7\Delta^3\alpha|\Gamma|)/\alpha\beta^{2\Delta}
\\&> |R_0|/\alpha\beta^{2\Delta} - 14\Delta^3(n-|\Gamma|)-|\Gamma|
\\& = |R_0|/\alpha\beta^{2\Delta} - 14\Delta^3\left(n-\frac{14\Delta^3-1}{14\Delta^3}|\Gamma|\right)
\\ &> |R_0|/\alpha\beta^{2\Delta} - 14\Delta^3(n-|H''|),
\end{align*}
where the last inequality holds by \cref{bdH''}. Thus, we have shown \cref{R_size} and thus the desired claim in this case.

Suppose instead that there are at most $\kappa^{2\beta}$ elements of order $\kappa$ in $\Gamma/H'$, so that $|T|\leq \kappa^{2\beta}$. Then the largest color class $\cC_i\coloneqq \{x\in I:\cC(x)=i\}$ (call it $\cC_{\text{max}}$) has size at least $|I|/\kappa^{2\beta}$, and on this set, we have that $c=c_1/H'$ takes at most $|S|<\beta$ colors, which means that $c_1:E(\cC_{\text{max}})\to \Gamma$ takes at most $\alpha\beta$ colors since $|H'|\leq \alpha$. Thus, we have a set $\cC_{\text{max}}$ whose edges are colored by at most $\alpha\beta$ colors.
Thus, since $$\begin{aligned}
|\cC_{\text{max}}|&\ge |I|/\kappa^{2\beta}\\
&\geq |R_0|/\alpha\beta^{2\Delta}\kappa^{2\beta} - 14\Delta^3n\\
&\ge \frac{10\cdot 20^{2\Delta}\cdot \Delta^{41\Delta^6}}{10\Delta^6\cdot (20\Delta^6)^{2\Delta}\cdot \kappa^{40\Delta^6}}\cdot C'(\alpha\beta,\Delta) \cdot n - 14\Delta^3n\\
&\ge \Delta^{\Delta^6-6-12\Delta}\cdot C'(\alpha\beta,\Delta) \cdot n - 14\Delta^3n \\
&\ge C'(\alpha\beta,\Delta)\cdot n,
\end{aligned}$$ by \Cref{ramseylinear} it follows that the vertex set $\cC_{\text{max}}$ contains a copy of $G$ that is monochromatic under the edge-coloring $c_1$. However, recall that $c_0(xy)=c(xy)+s+\cC_0(x)+\cC_0(y)=c(xy)+s$ holds for all $x, y\in\cC_{\text{max}}$, and so this copy of $G$ is also monochromatic under our original edge-coloring $c_0$. This is a contradiction to the assumption that there is no such copy in $R_0$. This completes the proof of the claim, and hence we have shown that \nameref{Amain} can not fail to realize a blueprint pair when $\lambda = \beta$.
\end{subproof}

\begin{remark}
    This is where the value of the constant $C$ in \Cref{main} comes from. Thus, if it were possible to avoid going into Phase 1 entirely, and hence avoid appealing to \Cref{ramseylinear}, then the constant would be much smaller, and would not depend at all on $C'(\alpha\beta,\Delta)$.
\end{remark}

\subsection{If Phase 2 fails}\label{phase_2}

Suppose in Phase 0 (\Cref{phase0}) we chose a $\kappa$--well-behaved tuple $(R,\Gamma,T,\cC,s)$ with size $\sigma=|\Gamma|$, and that in our running of \nameref{sub1} with parameters $(R_1,\Gamma,c,\cC)$ we get stuck on some iteration $i$ in Phase 2, i.e. we have reached some vertex set $X\subset R_i$ and some nontrivial quotient group $\Gamma/H'_i$ and edge-coloring $c_i=c_1/H'_i$ and a blueprint pair $(\Upsilon_1, \Upsilon_2)\in K_i$ of type $(d, d', m)$ for which we cannot find a realization. Similarly to \Cref{phase_1}, we drop the $i$ subscript and refer to the relevant objects as $c$ and $H'$, respectively. Then, since we know that \nameref{Amain} ensures that each color class in $X$ has size at least $2\Delta^2=2\Delta(\lambda-2+\Delta)$,  by \Cref{prelim_phase_arg} we know that there is no $(d,2)$--, $(d',2)$--, or $(d,d',m,2)$--gadget in $X$. Recall also that every blueprint pair in $K$ satisfies $\gcd(d,d',n)=\kappa$. 

We will find suitable $I\subset X$, $T'\subset \Gamma_0$, $\cC':I\to T',$ and $s'\in \Gamma_0$ so that $(I,H',T',\cC',s')$ is $\kappa$--well-behaved and $\sigma(I,H',T',\cC',s')$ $<\sigma(R,\Gamma,T,\cC,s)=\sigma$, thus giving a contradiction.

We begin by finding a suitable set $T'\subset \Gamma$ and vertex coloring $\cC^*$ (which will \emph{not} be our final coloring $\cC'$, to be defined later). Define $\cC_t(X) = \{x\in X: \cC(x)=t\}$. Then for each $t\in T$ we can apply \Cref{core_phase_arg} with the set being $\cC_t(X)$ and $\lambda=2$ to obtain sets $I_t \subset \cC_t(X)$ and $T_t\subset \Gamma$, along with an element $s_t\in \Gamma$ and a map $\cC^*_t:I_t\to T_t$ such that:

\begin{enumerate}
    \item\label{Ibd} $|I_t|\geq |\cC_t(X)|-2\Delta$;
    \item for all $i\in T_t$, we have $\kappa i \in H'$; and
    \item for all $x,y\in I_t$ distinct, we have $c(xy)=s_t+\cC^*_t(x)+\cC^*_t(y)+H'$.
\end{enumerate}

But since the sets $\{\cC_t(X): t\in T\}$ are pairwise disjoint, we can find a map $\cC^*:I\to \Gamma$, where 
\beq{Idef}I\coloneqq \bigcup_{t\in T:|I_t|\geq 2\Delta^2} I_t,\enq
which extends each $\cC^*_t$ with $t\in T$ satisfying $|I_t|\geq 2\Delta^2$.

We now bound the size of $I$. Since the sets $\{\cC_t(X): t\in T\}$ partition $X$, using \cref{Ibd} above we have \beq{sizeI}|I|\geq |X|-(2\Delta^2+2\Delta)|T|.\enq

Now define $\cC_t\coloneqq \{x\in I: \cC(x)=t\}$ and $\cC_{t,i}\coloneqq\{x\in \cC_t: \cC^*(x)=i\}$ for each $t\in T$ and $i\in \Gamma$.

\begin{figure}[htbp]
  \begin{center}
  \begin{tikzpicture}[
  v2/.style={fill=black,minimum size=4pt,ellipse,inner sep=1pt},
  node distance=1.5cm,scale=0.5]

    \draw (-5,0) ellipse [x radius=2.5cm, y radius=4cm];
  \draw (-5,2.5) circle [radius=1.3cm];
 \draw (-5,0) circle [radius=0.8cm];
 \node[v2] (a2) at (-5,-2.5){};
  \node[font=\scriptsize,below] at (a2) {$a_2\in \cC_{a,k}$};
 \node[v2] (a1) at (-5,2.5){};
  \node[font=\scriptsize,above] at (a1) {$a_1$};
  \node[font=\scriptsize]  at (-5,0){$V$}; 
 \draw (a1)--(-4.242,0.256);
 \draw (a1)--(-5.758,0.256);
 \draw (a2)--(-4.242,-0.256);
 \draw (a2)--(-5.758,-0.256);
 
  \draw (5,0) ellipse [x radius=2.5cm, y radius=4cm];
  \draw (5,2.5) circle [radius=1.3cm];
 \draw (5,0) circle [radius=0.8cm];
 \node[v2] (b2) at (5,-2.5){};
  \node[font=\scriptsize,below] at (b2) {$b_2\in \cC_{b,\ell}$};
 \node[v2] (b1) at (5,2.5){};
  \node[font=\scriptsize,above] at (b1) {$b_1$};
  \node[font=\scriptsize]  at (5,0){$W$};
 \draw  (b1)--(4.242,0.256);
 \draw  (b1)--(5.758,0.256);
 \draw (b2)--(4.242,-0.256);
 \draw  (b2)--(5.758,-0.256);
 
 \draw (a1)--(b1);
  \draw (a2)--(b1);
    \draw (a2)--(b2);
  
   \node[font=\scriptsize]  at (-7,0){$\cC_a$}; 
   \node[font=\scriptsize]  at (7,0){$\cC_b$};
  \node[font=\scriptsize] at (-5.8,2.5){$\cC_{a,i}$};
  \node[font=\scriptsize] at (5.8,2.5){$\cC_{b,j}$};
 
\end{tikzpicture}
\end{center}
\caption{An example of stars described in  \Cref{lem:Xab}.}\label{fig:Xab}
\end{figure}
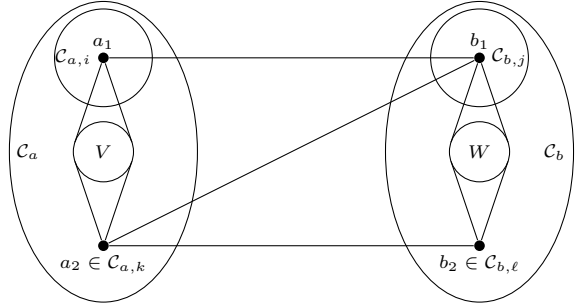
\begin{lemma}\label{lem:Xab}
    Given $a,b\in T$, there exists $s_{a,b}\in \Gamma$ such that for all $i\in T_a,j\in T_b$, and for all $a_1\in \cC_{a,i}$ and $b_1\in \cC_{b,j}$, we have
    $$c(a_1b_1) = i+ j + s_{a,b}+H'.$$

    In particular, we have $s_{a, a}=s_a$, where $s_a$ is as given by the application of \Cref{core_phase_arg} above.
\end{lemma}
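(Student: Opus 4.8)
The plan is to derive, from the absence of the relevant $2$-multiplicity gadgets in $X$ (which \Cref{prelim_phase_arg} guarantees, as each $\cC$-class in $X$ has size at least $2\Delta^2$), a bilinear-type description of the edge colouring $c$ between any two surviving classes $\cC_a$ and $\cC_b$, bootstrapping it with the within-class structure from \Cref{core_phase_arg}. Specializing \cref{colourful} of \Cref{def:gadgets} for a $(d,d',m,2)$-gadget to $|X_1|=2,\ |X_2|=1$ (and symmetrically to $|X_1|=1,\ |X_2|=2$), with all fixed bundles $P_v=\emptyset$ and noting that the complete-bipartite and bundle conditions hold automatically inside the complete graph on $X$, the hypothesis ``no $(d,d',m,2)$-gadget in $X$'' yields: for every $W\subseteq X$ with $|W|=d$ and every $v,v'\in X\setminus W$ (there is always enough remaining room for the auxiliary sets),
\[\textstyle\sum_{u\in W}c(uv)+d\cdot\cC(v)\;\equiv\;\sum_{u\in W}c(uv')+d\cdot\cC(v')\pmod{H'},\]
together with the analogue in which $d$ is replaced by $d'$ and $|W|=d'$. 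If $a=b$ the lemma — including $s_{a,a}=s_a$ — is immediate from \Cref{core_phase_arg} applied to $\cC_a(X)$, which gives $c(xy)=s_a+\cC^*(x)+\cC^*(y)+H'$ for distinct $x,y\in I_a\supseteq\cC_a$. So assume $a\ne b$, and also that both classes survive into $I$ (otherwise $\cC_a$ or $\cC_b$ is empty and there is nothing to prove), so $\cC_a=I_a$ and $\cC_b=I_b$ have size at least $2\Delta^2$.

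The key arithmetic point is that $\kappa=\gcd(d,d',n)$ divides both $d$ and $d'$, so from $\kappa t\in H'$ for all $t\in T_a\cup T_b$ (\Cref{core_phase_arg}) we get $d\,t\in H'$ for $t\in T_a$ and $d'\,t\in H'$ for $t\in T_b$; hence $(d-1)t\equiv -t$ and $(d'-1)t\equiv -t\pmod{H'}$. Fix $b_1\in\cC_b$ and distinct $a_1,a_2\in\cC_a$, choose $D\subseteq\cC_a\setminus\{a_1,a_2\}$ with $|D|=d-1$, and apply the displayed identity with $W=\{b_1\}\cup D$ and $v,v'=a_1,a_2$. Since $\cC(a_1)=\cC(a_2)=a$ the degree-weighted vertex terms cancel, and since $c(ua_k)=s_a+\cC^*(u)+\cC^*(a_k)+H'$ for $u\in D\subseteq I_a$ (\Cref{core_phase_arg}) the common quantity $(d-1)s_a+\sum_{u\in D}\cC^*(u)$ cancels too, leaving $c(a_1b_1)+(d-1)\cC^*(a_1)\equiv c(a_2b_1)+(d-1)\cC^*(a_2)\pmod{H'}$; by the observation above this becomes $c(a_1b_1)-\cC^*(a_1)\equiv c(a_2b_1)-\cC^*(a_2)\pmod{H'}$. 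A symmetric argument, using a set of size $d'$ of the form $\{a_1\}\cup D'$ with $D'\subseteq\cC_b$, $|D'|=d'-1$, and the free pair lying in $\cC_b$, gives $c(a_1b_1)-\cC^*(b_1)\equiv c(a_1b_2)-\cC^*(b_2)\pmod{H'}$.

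Combining the two invariances shows that the coset $c(a_1b_1)-\cC^*(a_1)-\cC^*(b_1)+H'$ does not depend on the choice of $a_1\in\cC_a$ or $b_1\in\cC_b$; taking $s_{a,b}\in\Gamma$ to be any representative of this coset and recalling that $a_1\in\cC_{a,i}$ means $\cC^*(a_1)=i$ while $b_1\in\cC_{b,j}$ means $\cC^*(b_1)=j$, we obtain $c(a_1b_1)=i+j+s_{a,b}+H'$ for all admissible $i,j,a_1,b_1$, which is the assertion.

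I expect the only real work to be bookkeeping: carefully unwinding \Cref{def:gadgets} to justify the displayed reformulation of the no-gadget hypothesis (in particular that empty fixed bundles are allowed and that the structural conditions are free inside the complete graph on $X$), and keeping track of which auxiliary set lies in $\cC_a$ and which in $\cC_b$ so that \Cref{core_phase_arg}'s formula applies to the edges we sum over. The only non-formal ingredient is the divisibility $\kappa\mid d,d'$, which is precisely what makes the stray factor $d-1$ (resp.\ $d'-1$) collapse to $-1$ modulo $H'$.
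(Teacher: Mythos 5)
Correct, and essentially the paper's own argument: you compare two stars centred in $\cC_a$ with a fixed anchor in $\cC_b$, feed in the within-class formula from \Cref{core_phase_arg}, use $\kappa\mid d$ to collapse $(d-1)\cC^*(\cdot)$ to $-\cC^*(\cdot)\pmod{H'}$, and then do the symmetric comparison to get independence in both variables. The only cosmetic differences are that you invoke the $(d',2)$-gadget for the symmetric step where the paper re-uses $(d,2)$ (both work since $\kappa$ divides both $d$ and $d'$) and that you make the $a=b$ case and the ``$s_{a,a}=s_a$'' clause explicit.
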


\begin{proof}
Given $a, b\in T$, note that either we have $\cC_a=\emptyset$ or $\cC_b=\emptyset$, in which case the statement is vacuously true, or we have $|\cC_a|, |\cC_b|>2\Delta^2$, which we henceforth assume.

Pick arbitrary $a_2\in \cC_a$ and $b_2\in \cC_b$ with $a_2\neq b_2$, say with $a_2\in \cC_{a,k}$ and $b_2\in \cC_{b,\ell}$ for some $k, \ell\in\Gamma$. Then take any $V\subset \cC_a$ and $W\subset \cC_b$, both of size $d-1$, such that $V\cup W$ is disjoint from $\{a_1,a_2,b_1,b_2\}$. Define $\cC^*(V) \coloneqq  \sum_{v\in V} \cC^*(v)$ and $\cC^*(W) \coloneqq  \sum_{w\in W} \cC^*(w)$. 
  
Now consider the stars $Va_1b_1$ and $Va_2b_1$ (see \Cref{fig:Xab}), and recall our assumption that there is no $(d, 2)$--gadget under coloring $c$ in $X$. Thus we have $c(Va_1b_1)+d\cdot \cC(a_1)=c(Va_2b_1)+d\cdot \cC(a_2)$. Note that for each $v\in V$, we have $c(va_1)=s_a+i+\cC^*(v)+H'$ and $c(va_2)=s_a+k+\cC^*(v)+H'$. Since $\cC(a_1)=\cC(a_2)=a$, it holds that  
$$(d-1)\cdot s_a+(d-1)\cdot i+\cC^*(V)+c(a_1b_1)=(d-1)\cdot s_a+(d-1)\cdot k+\cC^*(V)+c(a_2b_1)$$
and so $$c(a_1b_1)=(d-1)\cdot (k-i)+c(a_2b_1)$$ and thus, recalling that $\kappa$ divides $d$ and that each $i\in T_a$ satisfies $\kappa\cdot i \in H'$, we get: $$c(a_1b_1)=i-k+c(a_2b_1).$$

By an identical argument (using the stars $Wb_1a_2$ and $Wb_2a_2$), we get that $$c(b_1a_2) = j-\ell+c(b_2a_2)$$ and so $$c(a_1b_1)=i+j-k-\ell+c(a_2b_2).$$ Thus $c(a_1b_1)-i-j=c(a_2b_2)-k-\ell$ is an element of $\Gamma/H'$ that depends only on $a$ and $b$, say $s_{a,b}+H'$ for some $s_{a, b}\in\Gamma$. Clearly, $i+j+s_{a,b}+H'=c(a_1b_1)$ as desired.
\end{proof}

Recall that the original $\kappa$--well-behaved tuple found in \Cref{phase0} is $(R,\Gamma,T,\cC,s)$ and that we are aiming to find some $\kappa$--well-behaved $(I,H',T',\cC',s')$ with a smaller size in order to reach a contradiction. So far, we have found $H'$ and $I$.

Fix some $r\in T$ so that $\cC_r$ is non-empty and define

\[t_{a,i}'\coloneqq a + i + s_{r,a} - s_r - r\]
for all $a\in T$ and $i \in T_a$. Note that $t_{r,i}' = r + i + s_r-s_r-r=i$. Then define
\beq{T'def}T' \coloneqq \{t_{a,i}': a\in T, i\in T_a, \cC_{a, i}\neq\emptyset\}.\enq

Recall that $c=c_1/H'$ and $c_1(xy)=c_0(xy)-s-\cC(x)-\cC(y)$.

\begin{figure}[htbp]
  \begin{center}
  \begin{tikzpicture}[
  v2/.style={fill=black,minimum size=4pt,ellipse,inner sep=1pt},
  node distance=1.5cm,scale=0.5]

    \draw (-5,0) ellipse [x radius=2cm, y radius=3.4cm];
  \draw (-5,1.5) circle [radius=1.3cm];
 \draw (-5,-2) circle [radius=0.8cm];
 \node[v2] (a1) at (-5,1.5){};
  \node[font=\scriptsize,above] at (a1) {$\cC_{r,j}$};
   \node[font=\scriptsize,left] at (a1) {$z$};
  \node[font=\scriptsize]  at (-5,-2){$Z$}; 
\draw (a1)--(-4.221,-1.817);
 \draw (a1)--(-5.779,-1.817);

      \draw (5,0) ellipse [x radius=2cm, y radius=3.4cm];
    \draw (5,0) circle [radius=1.3cm];

   \node[font=\scriptsize]  at (-6.5,0){$\cC_r$}; 
\node[font=\scriptsize]  at (5,-2){$\cC_a$};
  \node[v2]  (a2) at (5,0){};
\node[font=\scriptsize,below] at (a2){$a_1$};
\node[font=\scriptsize,above] at (a2){$\cC_{a,i}$};
   
   \draw (a2)--(-5.095,-1.206);
 \draw (a2)-- (-4.782,-2.770);
 
\end{tikzpicture}
\end{center}
\caption{An example of stars in \Cref{prot2_T_order}.}\label{fig:tai}
\end{figure}
\begin{lemma}\label{prot2_T_order}
    We have $\kappa\cdot t' \in H'$ for all $t' \in T'$.
\end{lemma}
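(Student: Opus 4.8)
The goal is to show $\kappa t' \in H'$ for every $t' \in T'$, where $t'_{a,i} = a + i + s_{r,a} - s_r - r$ for $a \in T$, $i \in T_a$, with $\cC_{a,i}\neq\emptyset$. The plan is to compute $\kappa \cdot t'_{a,i}$ term by term and show each contribution lands in $H'$. We already know from the setup that $\kappa a \in \Gamma$ for $a \in T$ (by condition \ref{T_order} in \Cref{well-behaved} applied to the original well-behaved tuple, which gives $\kappa a \in \Gamma$), and from \Cref{core_phase_arg} that $\kappa i \in H'$ for all $i \in T_a$. So the terms $\kappa i$ are harmless. The real content is to control $\kappa(a + s_{r,a} - s_r - r)$, and for this I would extract an identity relating $s_{r,a}$, $s_r$, $a$, and $r$ directly from the colour-structure lemmas.

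First I would fix $a \in T$ with $\cC_{a,i}\neq\emptyset$ (so in particular $|\cC_a|\geq 2\Delta^2$ and $|\cC_r|\geq 2\Delta^2$, these being large by \Cref{vertex_count_thm}), pick $z \in \cC_{r,j}$ for some $j \in T_r$ and $a_1 \in \cC_{a,i}$, and set up a gadget-type star argument as hinted at in \Cref{fig:tai}: take $Z \subset \cC_r$ of size $d-1$ with appropriate disjointness, and compare the stars $Z a_1 z$ against some reference, exploiting that there is no $(d,2)$--gadget in $X$. Using \Cref{lem:Xab} to evaluate the relevant edge colours — $c(va_1) = i + \cC^*(v) + s_{r,a} + H'$ for $v\in Z\subset\cC_r$, and $c(vz) = j + \cC^*(v) + s_r + H'$ since $z,v$ both lie in $\cC_r$ — together with the fact that $d \cdot c(a_1 z)$ is determined, I would derive an identity of the form $d\cdot(\text{something involving } a,i,j,s_{r,a},s_r) \in H'$. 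Recalling that $\kappa \mid d$ and that $\kappa i, \kappa j \in H'$, this should collapse to $\kappa(a + s_{r,a} - s_r - r + i) \in H'$ after accounting for the vertex-colouring shift in the definition of $c_1$ versus $c_0$ (the $-\cC(x)-\cC(y)$ and $-s$ terms), and hence $\kappa t'_{a,i} \in H'$.

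The main obstacle I anticipate is bookkeeping the difference between $\cC$ and $\cC^*$ and the various shifts. Here $\cC$ is the \emph{original} well-behaved vertex colouring (constant value $a$ on $\cC_a$), while $\cC^*$ is the \emph{refined} colouring produced by \Cref{core_phase_arg} within each class $\cC_t(X)$; the edge colour $c(xy) = c_1(xy)+H' = (c_0(xy)-s-\cC(x)-\cC(y))+H'$ mixes both, and \Cref{lem:Xab} packages the $\cC^*$-dependence cleanly but the $\cC$-dependence ($\kappa a \in \Gamma$ but not obviously in $H'$) is exactly what needs to cancel. The point is that multiplying by $\kappa$ kills the $\cC^*$-terms (since $\kappa \cC^*(v) \in H'$) and the $d$-fold multiplicity in the star argument kills the $\cC$-terms at the endpoints (since $d\cdot\cC(a_1)$ appears symmetrically and $\kappa \mid d$), leaving only the structural constants $a, r, s_{r,a}, s_r$; one must check that these recombine into precisely $\kappa t'_{a,i}$. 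A secondary point is ensuring the stars used are genuinely within the $(d,2)$--gadget-free set $X$ and that the disjointness requirements on $Z$ can be met, which follows from the size bounds $|\cC_r|,|\cC_a| \geq 2\Delta^2 > 2\Delta$ established above.
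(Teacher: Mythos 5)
The overall shape of your plan matches the paper's: fix $z\in\cC_{r,j}$, $a_1\in\cC_{a,i}$, a set $Z\subset\cC_r$, compare two stars using the absence of a $(d,2)$--gadget, and plug in \Cref{lem:Xab} to evaluate the edge colours. But there is a genuine logical gap in the final step, and a secondary wobble in the star comparison itself.

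The key gap: you derive (modulo the bookkeeping) something of the form $d\cdot(a+s_{r,a}-s_r-r)\in H'$, and then write \emph{``Recalling that $\kappa\mid d$ \ldots this should collapse to $\kappa(a+s_{r,a}-s_r-r+i)\in H'$.''} This deduction is backwards. Knowing $dx\in H'$ with $\kappa\mid d$ does \emph{not} give $\kappa x\in H'$ (e.g.\ $d=4$, $\kappa=2$, $H'=\{0\}\leqslant\mathbb{Z}_4$, $x=1$: $4x=0\in H'$ but $2x=2\notin H'$); you cannot divide by $d/\kappa$ in an abelian group. The paper closes this gap by running the \emph{same} star comparison a second time with stars of size $d'$ (using the absence of $(d',2)$--gadgets, where $(d,d',m)$ is the type of the failing blueprint pair), obtaining $d'\cdot(a+s_{r,a}-s_r-r)\in H'$ as well, and then applying B\'ezout to $\gcd(d,d',n)=\kappa$ (Proposition~\ref{K_partition}(ii), together with $n\cdot x=0$) to conclude $\kappa\cdot(a+s_{r,a}-s_r-r)\in H'$. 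Your plan never mentions $d'$, the $(d',2)$--gadget-free condition, or B\'ezout, so this step is missing rather than merely unwritten.

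Secondary issues: the paper takes $|Z|=d$ and compares the two stars $Zz$ and $Za_1$ (so the gadget-free condition reads $c(Zz)+d\cdot\cC(z)=c(Za_1)+d\cdot\cC(a_1)$, cleanly isolating $d\cdot(a-r)$). Your $|Z|=d-1$ together with the ``star $Za_1z$'' and the phrase ``$d\cdot c(a_1z)$ is determined'' does not correspond to any form of the no--$(d,2)$--gadget condition; what is constant is the quantity $c(Uv)+d\cdot\cC(v)$ for a fixed $d$-set $U$ as $v$ varies, not any multiple of a single edge colour. Also, your worry about translating between $c_0$ and $c_1$ is moot here: \Cref{lem:Xab} is already stated for $c=c_1/H'$, so the $s$ and $\cC$ shifts never enter this proof. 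Finally, $\kappa a\in\Gamma$ (from condition (ii) of \Cref{well-behaved}) is indeed of no direct use, as you note, precisely because it gives membership in $\Gamma$ rather than $H'$ — which is why the full B\'ezout step with both $d$ and $d'$ is needed to kill the $a$ and $r$ contributions.
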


\begin{proof}
Given some $t'=t'_{a, i}\in T'$ for some $a\in T$ and $i\in T_a$, pick an arbitrary $z \in \cC_{r,j}$ (for some $j\in\Gamma$ where $\cC_{r,j}$ is non-empty),  $Z\subset \cC_r\setminus \{z\}$ with $|Z|=d$, and $a_1\in \cC_{a,i}$. Let $\cC^*(Z)=\sum_{z\in Z} \cC^*(z)$. 

Now compare the stars $Zz$ and $Za_1$ (see \Cref{fig:tai}), recalling our assumption that there is no $(d, 2)$--gadget under coloring $c$ in $X$, so $c(Zz)+d\cdot\cC(z)=c(Za_1)+d\cdot \cC(a_1)$. By \Cref{lem:Xab}, it holds that $c(Zz)= d\cdot s_r + \cC^*(Z)+d\cdot j+H'$ and $c(Za_1)=d\cdot s_{r,a}+\cC^*(Z)+d\cdot i+H'$. This yields that
\[\begin{aligned}
    d\cdot s_r + \cC^*(Z)+d\cdot j + d\cdot r+H'&=c(Zz) + d\cdot r \\
    &=c(Za_1) + d\cdot a = d\cdot s_{r,a}+\cC^*(Z)+d\cdot i+d\cdot a+H',
\end{aligned}\]
and further, since $d\cdot i, d\cdot j\in H'$, we get 
\[d\cdot (s_{r,a} +a -s_r-r)\in H'.\] 
Similarly, recall that there are no $(d', 2)$--gadgets in $X$ (with $\gcd(d, d')=\kappa'$), and by arguing as above we also obtain that
$$d'\cdot (s_{r,a}+a-s_r-r)\in H'$$ 
so by B\'ezout's identity, we get that 
$$\kappa'\cdot (s_{r,a}+a-s_r-r)\in H'$$ 
and so
$$\kappa\cdot (s_{r,a}+a-s_r-r)\in H'.$$

But since $\kappa i\in H'$, we have that
$$\kappa\cdot  t'_{a,i} =  \kappa\cdot (s_{r,a}+a-s_r-r)+\kappa\cdot i\in H',$$ 
as desired.
\end{proof}
We are finally in a position to define our vertex coloring. Define $\cC':I \to T'$ as
\beq{defC"}\cC'(x)\coloneqq t'_{\cC(x),\cC^*(x)}\enq
and
\beq{s'def}s'\coloneqq s+s_r+2r\enq
where we recall $s_r$ was given by our application of \Cref{core_phase_arg} at the start of this section. We want to establish \cref{C_correct} of \Cref{well-behaved}.

\begin{figure}[htbp]
  \begin{center}
  \begin{tikzpicture}[
  v2/.style={fill=black,minimum size=4pt,ellipse,inner sep=1pt},
  node distance=1.5cm,scale=0.5]

    \draw (-5,0) ellipse [x radius=2cm, y radius=3.4cm];
    \draw (-5,1.5) circle [radius=1.3cm];
 \draw (-5,-2) circle [radius=0.8cm];
 \node[v2] (a1) at (-5,1.5){};
  \node[font=\scriptsize,above] at (a1) {$\cC_{r,k}$};
   \node[font=\scriptsize,left] at (a1) {$z_1$};
  \node[font=\scriptsize]  at (-5,-2){$Z$}; 
\draw (a1)--(-4.221,-1.817);
 \draw (a1)--(-5.779,-1.817);
  
      \draw (1,1.5) ellipse [x radius=2.4cm, y radius=1.4cm];
    \draw (1,1.5) circle [radius=1.3cm];

    \draw (1,-2) ellipse [x radius=2.4cm, y radius=1.4cm];
    \draw (1,-2) circle [radius=1.3cm];

   \node[font=\scriptsize]  at (-6.5,0){$\cC_r$}; 
\node[font=\scriptsize]  at (2.8,1.5){$\cC_a$};
  \node[v2]  (a2) at (1,1.5){};
\node[font=\scriptsize,right] at (a2){$a_1$};
\node[font=\scriptsize,above] at (a2){$\cC_{a,i}$};

\node[font=\scriptsize]  at (2.8,-2){$\cC_b$};
  \node[v2]  (b2) at (1,-2){};
\node[font=\scriptsize,below] at (b2){$b_1$};
\node[font=\scriptsize,right] at (b2){$\cC_{b,j}$};

   \draw (b2) -- (a2);
   \draw (a1) -- (a2);
   \draw (b2)--(-5.095,-1.206);
 \draw (b2)-- (-4.782,-2.770);

\end{tikzpicture}
\end{center}
\caption{An example of stars in \Cref{lem_C_correct}.}\label{fig:CC}
\end{figure}
\begin{lemma}\label{lem_C_correct}
    Given $x\in \cC_{a,i}$ and $y\in \cC_{b,j}$ for $a, b\in T$, $i\in T_a$, and $j\in T_b$, we have 
    \[c_0(xy)+H' = s' +\cC'(x) +\cC'(y)+H'.\]
\end{lemma}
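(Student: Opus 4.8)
The plan is to reduce the statement to a single ``cocycle''-type congruence modulo $H'$ and then to extract that congruence from the assumed absence of a $(d,2)$-gadget in $X$. Since $c=c_1/H'$ with $c_1(xy)=c_0(xy)-s-\cC(x)-\cC(y)$, and since $x\in\cC_{a,i}$, $y\in\cC_{b,j}$ forces $\cC(x)=a$ and $\cC(y)=b$, applying \Cref{lem:Xab} to the pair $(a,b)$ gives
\[
c_0(xy)+H'=c(xy)+s+a+b+H'=s+a+b+i+j+s_{a,b}+H'.
\]
On the other hand, substituting $\cC'(x)=t'_{a,i}=a+i+s_{r,a}-s_r-r$, $\cC'(y)=t'_{b,j}=b+j+s_{r,b}-s_r-r$ and $s'=s+s_r+2r$ and cancelling terms yields $s'+\cC'(x)+\cC'(y)=s+a+b+i+j+s_{r,a}+s_{r,b}-s_r$. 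Hence the lemma is equivalent to the congruence $s_{a,b}\equiv s_{r,a}+s_{r,b}-s_r\pmod{H'}$, which I will call $(\star)$.

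To establish $(\star)$ I would compare two stars over a common set. Since $r$ was chosen with $\cC_r\neq\emptyset$ we have $|\cC_r|=|I_r|\geq 2\Delta^2$, and $\cC_{a,i},\cC_{b,j}\neq\emptyset$ by hypothesis; so I can pick $a_1\in\cC_{a,i}$ and $b_1\in\cC_{b,j}$ with $a_1\neq b_1$, an index $k$ with $\cC_{r,k}\neq\emptyset$ and a vertex $z_1\in\cC_{r,k}\setminus\{a_1,b_1\}$, and a set $Z\subset\cC_r\setminus\{z_1,a_1,b_1\}$ with $|Z|=d-1$. Since no realization of $(\Upsilon_1,\Upsilon_2)$ exists, \Cref{prelim_phase_arg} (using that every $\cC$-class in $X$ has size at least $2\Delta^2$) gives that there is no $(d,2)$-gadget in $X$ under $c$; applying this with the size-$d$ set $\tilde D=Z\cup\{b_1\}$ and the two free vertices $a_1,z_1$ forces
\[
c(Za_1b_1)+d\cdot\cC(a_1)=c(Zz_1b_1)+d\cdot\cC(z_1),
\]
that is, $c(Za_1b_1)+d\cdot a=c(Zz_1b_1)+d\cdot r$ in $\Gamma/H'$.

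I would then evaluate both sides via \Cref{lem:Xab}. Writing $\cC^*(Z)\coloneqq\sum_{z\in Z}\cC^*(z)$ and using the pairs $(r,a)$ and $(a,b)$ on the left and $(r,r)$ (recall $s_{r,r}=s_r$) and $(r,b)$ on the right, one obtains
\[
c(Za_1b_1)=\cC^*(Z)+d\cdot i+j+(d-1)s_{r,a}+s_{a,b}+H',\qquad c(Zz_1b_1)=\cC^*(Z)+d\cdot k+j+(d-1)s_r+s_{r,b}+H'.
\]
Because $\kappa\mid d$ (since $\gcd(d,d',n)=\kappa$) and $\kappa i,\kappa k\in H'$ by \Cref{core_phase_arg}, the terms $d\cdot i$ and $d\cdot k$ lie in $H'$. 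Cancelling the common $\cC^*(Z)+j$ then leaves $(d-1)s_{r,a}+s_{a,b}+d\cdot a\equiv(d-1)s_r+s_{r,b}+d\cdot r\pmod{H'}$. Finally, the proof of \Cref{prot2_T_order} shows $d\cdot(s_{r,a}+a-s_r-r)\in H'$, i.e.\ $d\cdot a\equiv d\cdot s_r+d\cdot r-d\cdot s_{r,a}\pmod{H'}$; substituting this cancels the $d\cdot r$ terms and turns the coefficient of $s_{r,a}$ into $(d-1)-d=-1$, which is exactly $(\star)$. Together with the first step this proves the lemma.

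The only real obstacle is the modular bookkeeping: keeping all these identities modulo $H'$ straight, and in particular disposing of the ``stray'' multiples $d\cdot a$ and $d\cdot r$ with $a,r\in T$, which (unlike elements of $T_a$ or $T_r$) are not a priori killed by multiplication by $\kappa$ modulo $H'$ -- the first is handled by \Cref{prot2_T_order} and the second by outright cancellation. One should also check the degenerate cases $a=b$, $a=r$, or $b=r$; since the computation above never divides by anything, it goes through verbatim as long as the chosen vertices $a_1,b_1,z_1$ and $Z$ are kept distinct, which is possible because $|\cC_r|\geq 2\Delta^2$.
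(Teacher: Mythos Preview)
Your proof is correct and follows essentially the same route as the paper: both reduce the claim to a relation among the $s_{a,b}$'s and then verify it by comparing two $(d,2)$-stars over a common base in $\cC_r$. The only cosmetic differences are that you first reduce to the symmetric form $s_{a,b}\equiv s_{r,a}+s_{r,b}-s_r\pmod{H'}$ and invoke \Cref{prot2_T_order} at the end, whereas the paper bakes $d\cdot t'_{b,j}\in H'$ into the reduction and then gets the star identity directly (and with the roles of $a_1$ and $b_1$ swapped).
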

\begin{proof}
First, note that given such $x, y$ we have $\cC'(x)=t'_{a, i}$ and $\cC'(y)=t'_{b, j}$. Moreover, recall from the fact that the tuple $(R, \Gamma, T, \cC, s)$ is $\kappa$-well behaved, $c=c_1/H'$ and \Cref{lem:Xab} that \[c_0(xy)+H' = s_{a,b} + i + j +s + a + b + H'.\]

Now, since $d\cdot t'_{a,i} \in H'$, the statement of the lemma is equivalent to showing the quantity above equals $s' +t'_{a,i}-(d-1)\cdot t'_{b,j}+H'$. Equivalently, in terms of $s_{a,b}$, we want the following:
$$\begin{aligned}
    H'+s_{a,b}&=H'-i-j-s-a-b+s+s_r+2r+a+i+s_{r,a}-s_r-r-(d-1)\cdot (b+j+s_{r,b}-s_r-r)\\
    &=H'+d\cdot r-d\cdot b+s_{r,a}-(d-1)\cdot (s_{r,b}-s_r)-d\cdot j\\
    &=H'+d\cdot r-d\cdot b+s_{r,a}-(d-1)\cdot (s_{r,b}-s_r).
\end{aligned}$$
But now consider $ z_1 \in \cC_{r,k}$, $Z\subset \cC_r\setminus \{z\}$ with $|Z|=d-1$, $a_1\in \cC_{a,i}$, and $b_1\in \cC_{b,j}$. By comparing the stars $Zb_1a_1$ and $Zza_1$, and recalling our assumption that there is no $(d, 2)$--gadget under coloring $c$ in $X$,  we know that  $d\cdot b + c(Zb_1a_1) 
 = d\cdot r+ c(Zz_1a_1)$. This yields 
\begin{align*}
d\cdot b + (d-1)\cdot s_{r,b}+ d\cdot j+\cC^*(Z)+ s_{a,b} + i+H'
&= d\cdot b + c(Zb_1a_1)\\
&= d\cdot r+ c(Zz_1a_1) \\
&= d\cdot r + (d-1)\cdot s_r+d\cdot k+\cC^*(Z)+s_{r,a} +i+H',
\end{align*}
and so, recalling that $d\cdot j+H'=d\cdot k+H'=H'$, 
$$
\begin{aligned}H'+s_{a,b}&=H'-(d-1)s_{r,b} - d\cdot b+   (d-1)\cdot s_r+s_{r,a} + d\cdot r \\
&=H'+d\cdot r-d\cdot b+s_{r,a}-(d-1)\cdot s_{r,b}  +   (d-1)\cdot s_r,
\end{aligned}$$ as desired.
\end{proof}

Let us take stock of our progress in this section. We started with a $\kappa$--well-behaved tuple $(R, \Gamma, T, \cC, s)$, assumed that there is some iteration $i$ in Phase 2 of \nameref{sub1} for which we fail to find a suitable realization, and constructed a tuple $(I, H', T', \cC', s')$ (with $I$ defined in \cref{Idef}, $H'=H'_i$ given by our algorithm when it failed, $T'$ given in \cref{T'def}, $\cC'$ in \cref{defC"} and $s'$ in \cref{s'def}, respectively).

\begin{lemma}
The tuple $(I,H',T',\cC',s')$ is $\kappa$--well-behaved.
\end{lemma}

Note the above lemma immediately yields a contradiction because $|H'|<|\Gamma|$ (as $\Gamma/H'$ is non-trivial). 

\begin{proof}
First, \Cref{prot2_T_order} gives us \cref{T_order}. Further, \Cref{lem_C_correct} gives us \cref{C_correct}. To show \cref{R_size}, by \Cref{vertex_step} and \Cref{vertex_jump}, we have $|X|\geq |R_1| - 7\Delta^3(|\Gamma|-|\Gamma|/|H'|) - 7\Delta^3(|\Gamma/H'|)= |R_1| - 7\Delta^3|\Gamma|$ which together with \cref{sizeI} gives us
\begin{align*}
    |I|>|X|-(2\Delta^2+2\Delta)|T|&\geq |R_1| - 7\Delta^3|\Gamma|-4\Delta^2|T|.
\end{align*}
By \Cref{R'R_size} we have $|R_1|>|R| - 2\Delta^2 |T|$, giving
\begin{align*}
    |I|> |R| - 7\Delta^3|\Gamma| - 6\Delta^2|T|.
\end{align*}
By \cref{R_size} of \Cref{well-behaved}, $|R|\geq |R_0|/\alpha\beta^{2\Delta} - 14\Delta^3(n-|\Gamma|) -6\Delta^2 (n/|\Gamma|)$ and so we have
\begin{align*}
    |I| &> |R_0|/\alpha\beta^{2\Delta} - 14\Delta^3(n-|\Gamma|) - 7\Delta^3|\Gamma| -12\Delta^2 (n/|\Gamma|)
    \\&= |R_0|/\alpha\beta^{2\Delta} - 14\Delta^3 n+ 7\Delta^3|\Gamma|  -12\Delta^2(n/|\Gamma|).
    \end{align*}
Finally, because $H'$ is a proper subgroup of $\Gamma$, we have  $|\Gamma|\geq 2|H'|$ and
\begin{align*}
    |I|&\ge |R_0|/\alpha\beta^{2\Delta} - 14\Delta^3 n+ 14\Delta^3|H'| -6\Delta^2 (n/|H'|)\\
    &=|R_0|/\alpha\beta^{2\Delta} - 14\Delta^3 (n-|H'|) -6\Delta^2\ (n/|H'|)\label{equ:2-6}.
\end{align*}

Hence $(I,H',T',\cC',s')$ is indeed $\kappa$--well-behaved.
\end{proof}

This gives us the contradiction. Thus, \nameref{Amain} never fails to realize blueprints when $\lambda=2$ (Phase 2). Along with the analysis carried on in \Cref{phase_1}, this proves \cref{cond:realize} in \Cref{embG}, thereby completing the proof of \Cref{main}. 

\section{Conclusion and further work}\label{sec.conclusion}

In this paper we considered zero-sum Ramsey numbers in the case where $|\Gamma_0|=e(G)$. Note that \Cref{main} immediately gives the same bound $R(G, \Gamma)\leq C n$ whenever $|\Gamma|$ divides $e(G)$, where $n=e(G)$. However, more interestingly, we can observe that the value of $C$ should decrease in the case of $N\coloneqq  e(G)/|\Gamma|$ large. Let us give an informal overview of how the computations in our proof change in this case. Note that if $m_1<n$ (with $m_1$ as in \nameref{sub1}), then we run Phase 1 of the algorithm in our proof for fewer iterations. Further, recall how we computed $\alpha$ in \Cref{sec.blueprints}: we required $|K|\cdot 2\alpha \geq |\Gamma_0|$. In this case, we can divide both $\alpha$ and $\beta$ by $N$ (to get new $\alpha'$ and $\beta'$ to be used in the algorithm) which implies that the new constant needed is about $C\cdot \frac{C'(\alpha\beta/N^2,\Delta)}{C'(\alpha\beta,\Delta)}\cdot \frac{1}{N^{2\Delta+1}\cdot \Delta^{(2\beta)(N-1)/N}}$ (see the calculations at the end of \Cref{phase_1}). That is, we run our algorithm until $\alpha/N=1$, and so we never enter Phase 1 of the algorithm. This means that we do not require \Cref{ramseylinear}, and then the new constant that replaces $C$ is determined entirely by \Cref{vertex_jump} and is on the order of $\Delta^3$. We choose not to present all the formal details here.

It is natural to try and extend our result to other graph classes, and try to prove linear upper bounds for more general graphs than just those of bounded degree. Recall, however, that if $n=e(G)$ then $R(G, \mathbb{Z}_n)\geq R(G)$, and so we cannot expect to obtain linear zero-sum Ramsey numbers for graphs that have too large classical Ramsey numbers. A natural family to consider is that of graphs of bounded degeneracy, and here a celebrated result of Lee \cite{Lee17} shows that $R(G)$ is indeed linear for such graphs $G$. We expect the same to be true for $R(G, \Gamma_0)$. 
\begin{conjecture}\label{prob:degenerate}
    Let $d\geq 1$ be an integer. Then there exists a constant $C=C(d)$ such that for any graph $G$ with degeneracy at most $d$ and $n$ edges, and any finite abelian group $\Gamma_0$ of order dividing $n$, we have $R(G, \Gamma_0)\leq C\cdot n$.
\end{conjecture}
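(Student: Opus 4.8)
The plan is to follow the architecture of the proof of \Cref{main} as closely as possible, substituting bounded degeneracy for bounded degree, while adding two new ingredients. As before it suffices to treat the case $|\Gamma_0| = e(G) = n$: a $d$-degenerate graph with no isolated vertices has $n/d \le v(G) \le 2n$, so a linear bound in $v(G)$ and one in $n$ agree up to a factor depending on $d$. Fix a pool $R_0$ of $Cn$ vertices with an arbitrary colouring $c_0 : E(R_0) \to \Gamma_0$; we seek an injection $f : V(G) \to R_0$ with $\sum_{xy \in E(G)} c_0(f(x)f(y)) = 0_{\Gamma_0}$.

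The first ingredient is a linear \emph{multicolour} Ramsey bound for degenerate graphs: for all $r, d$ there should be a constant $C'(r,d)$ with $R_r(G) \le C'(r,d)\, v(G)$ for every $d$-degenerate $G$. For $r=2$ this is Lee's theorem \cite{Lee17}; the general-$r$ statement, which should follow from the same dependent-random-choice machinery, is what one would invoke in place of \Cref{ramseylinear}, in exactly the same role: forcing a colouring that is ``too structured'' (well-behaved with large colour classes) to contain a monochromatic, hence zero-sum, copy of $G$. I would first isolate the precise multicolour statement needed and either cite it or extract it from \cite{Lee17}.

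The second ingredient --- and the main obstacle --- is the extraction of a usable family of gadgets, i.e.\ the analogue of \Cref{sec.blueprints}. In the bounded-degree proof one finds $\Omega(n)$ pairwise non-overlapping blueprint pairs, each built from two adjacent vertices of bounded degree, and this is hopeless in general for degenerate graphs: in $K_{2,n/2}$, which is $2$-degenerate with $n$ edges, every edge joins a vertex of degree $2$ to one of degree $n/2$, and the closed neighbourhoods of any two degree-$2$ vertices share both hubs, so there are only $O(1)$ non-overlapping blueprints. The fix is to let gadgets \emph{share} the images of high-degree vertices. Concretely, partition $V(G) = H \cup L$ with $H = \{v : \deg v > T(d)\}$; since $G[H]$ is $d$-degenerate we have $e(G[H]) \le d|H| \le 2dn/T$, a negligible fraction of the edges, so almost every edge meets $L$, and $L$ contains $\Omega(n)$ vertices of some common degree. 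All \emph{free} vertices of gadgets would be drawn from $L$, while a gadget's \emph{fixed} vertices may lie in $H$ and be reused by many gadgets. This forces a restructuring of \nameref{sub1}: one first commits to an embedding of all of $H$ into $R_0$ (placing each $h \in H$ after its $\le d$ earlier neighbours in a degeneracy order, which is possible as $|R_0| \gg |H|$), treats these hub-images as a permanent fixture never consumed by the loop, and only then runs the coset-generation loop over gadgets whose free vertices range over $L$. One must also revisit the degree-balancing partition of \Cref{K_partition}: a high-degree vertex $v$ with $\deg v \not\equiv 0 \pmod \kappa$ cannot be grouped with other vertices of its exact degree (there may be none), so it must be padded with a bounded number of low-degree vertices to restore divisibility by $\kappa$; showing this is always possible --- using the $\Omega(n)$ low-degree vertices of common degree together with the relation between $\kappa$ and the degree sequence --- is a delicate point.

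Granting these, the remaining steps should go through with only cosmetic changes: the definition of $\kappa$--well-behaved tuples, the $\oplus$-chaining of realizations, the use of Kneser's theorem (\Cref{Kneser}) and \Cref{sumset} to generate all of $\Gamma_0$, the monochromatic-copy escape via the multicolour degenerate Ramsey bound, and the vertex-count bookkeeping of \Cref{Sec.vertexcounts}, all with constants now depending on $d$ (and on $C'(r,d)$). The genuinely hard part, I expect, is the interaction within the second ingredient: one must re-derive the dichotomy ``either a gadget exists, or the colouring is highly structured'' for stars centred at low-degree vertices whose leaves may include high-degree vertices, and it is unclear that the two-variable identities of \Cref{core_phase_arg} survive unchanged once the degrees $d, d'$ attached to a blueprint are no longer uniformly bounded on both sides; controlling the well-behavedness of $c_0$ on the low-degree part while the high-degree embedding is held fixed is where most of the new work would lie.
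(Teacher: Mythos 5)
The statement is Conjecture~\ref{prob:degenerate}; the paper does not prove it. The authors remark in \Cref{sec.conclusion} that the only place bounded maximum degree is used is the blueprint extraction of \Cref{sec.blueprints} (culminating in \Cref{K_partition}), and leave the degenerate analogue as the open obstacle. You have correctly identified this exact bottleneck, and your $K_{2,n/2}$ example correctly diagnoses why the paper's extraction fails verbatim. So the proposal is anchored at the right place, and you are upfront that it is a plan rather than a proof; what follows are the specific gaps I see.

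First, the linear \emph{multicolour} Ramsey bound for $d$-degenerate graphs that you invoke does not appear in the literature in the form needed. Lee's theorem~\cite{Lee17} is a two-colour statement; to get $R_r(G)\le C'(r,d)v(G)$ you need a density version (every $N$-vertex graph of edge density at least $1/r$ contains every $d$-degenerate graph on up to $c(r,d)N$ vertices), which you would have to extract from Lee's argument rather than cite. This is plausible but is a theorem, not a remark.

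Second, the restructuring you describe loses the B\'ezout step of \Cref{core_phase_arg}. The paper's blueprint \emph{pairs} are two adjacent free vertices of degrees $d,d'$, and failing to find gadgets at both degrees yields $d\cdot c(ww')=\mathrm{const}$ and $d'\cdot c(ww')=\mathrm{const}$ simultaneously, which B\'ezout turns into $\kappa\cdot c(ww')=\mathrm{const}$ with $\kappa=\gcd(d,d',n)$. In your high/low split the set $L$ may be independent (again $K_{2,n/2}$), so there are no adjacent pairs of low-degree vertices; your gadgets collapse to single stars centred at $L$-vertices with pre-placed hub leaves, whose failure yields only the single relation $d_0\cdot c(ww')=\mathrm{const}$ for the common low degree $d_0$. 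This can perhaps be rescued --- the difference $c(ww')-c(ww'')$ automatically has order dividing $\gcd(d_0,|\Gamma/H'|)$ --- but the entire $\kappa$-bookkeeping, and the choice of $\kappa$ itself, must be rebuilt around a single degree rather than a pair, which is not a cosmetic change.

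Third, the hub degree-balancing step is a genuine difficulty, not just a delicate point. \Cref{K_partition} pads a fixed vertex $v$ with $d(v)\not\equiv 0\pmod\kappa$ by $\kappa-1$ further vertices of the \emph{same} degree, so the block sum is $\kappa\cdot d(v)\equiv 0\pmod\kappa$. A hub has essentially unique degree and cannot be padded that way; your suggestion of padding with low-degree vertices of common degree $d_0$ fails whenever $\kappa\mid d_0$ (which is exactly what a $\gcd(d_0,\cdot)$-type definition of $\kappa$ produces), since adding them then does not change the block sum modulo $\kappa$. One would either need a different choice of $\kappa$ forcing $\kappa\mid\deg h$ for (almost all) hubs $h$, or a different balancing scheme altogether; as stated the padding does not work.

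You flag the re-derivation of the gadget/well-behaved dichotomy with pre-placed hubs as the crux, and I agree with that assessment; together with the three issues above, the proposal is a reasonable map of what would need to be proved, but none of the new ingredients are established and at least the second and third appear to require new ideas.
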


Note that the only point in our proof where we used the assumption of bounded degree is in \Cref{sec.blueprints}, where our construction of blueprint pairs required that these blueprint pairs each only use a small number of vertices. Therefore, if one could prove a statement akin to \Cref{K_partition} for $d$-degenerate graphs, a positive resolution to \Cref{prob:degenerate} would then follow using the same strategy as in our current work. We also note that $1$--degenerate graphs are precisely forests, and so a very recent result of Colucci and D'Emidio \cite{CD25} resolves Conjecture \ref{prob:degenerate} for $d=1$, $\Gamma_0=\mathbb{Z}_p$, and $v(G)$ sufficiently large.

Finally, we have studied zero-sum Ramsey numbers of bounded degree graphs under the assumption that the group is abelian. It is natural to ask a similar question for non-abelian groups, as in \cite{Caro92allgroups}.
  \begin{problem}\label{prob:nonabelian}
   Given an integer $\Delta\geq 2$, does there exist a constant $C(\Delta)$ such that for any graph $G$ with maximum degree $\Delta$ and $n$ edges and any finite group $\Gamma$ of order $n$ we have $R(G, \Gamma)\leq Cn$?
 \end{problem}

 The proof of \Cref{main} relies heavily on \Cref{Kneser}, which does not apply to non-abelian groups \cite{OlsonKneser}. Hence, new ideas would be needed to study the non-abelian case and we therefore think any result on non-abelian groups would be interesting, even in simpler cases such as for $G$ being a path or for $|G|=N\cdot |\Gamma|$ for some large $N$.

\section*{Acknowledgments}
This work began when the authors attended a workshop organized by Leo Versteegen at the London School of Economics. We would like to thank Leo for his organizing and the LSE for its hospitality. We would also like to thank Julia Böttcher, Anubhab Ghosal and Matthew Jenssen for feedback on a previous draft of this manuscript.

\subsubsection*{Competing interests statement}
Competing interests: The authors declare none.

\subsubsection*{Funding declaration}

Xiaopan Lian was supported by National Natural Science Foundation of China No. 12371351. Alexandru Malekshahian was supported by ERC Advanced Grant 883810. The funders had no role in study design, data collection and analysis, decision to publish, or preparation of the manuscript.

\bibliographystyle{plain}
\bibliography{bibliography}

\appendix

\section{Proof of Lemma \ref{algebra2}}\label{sec.pfalg}

The proof of Lemma \ref{algebra2} uses the following two facts, both of which are consequences of Theorem \ref{thm:bascigroupthm}.
\begin{fact}\label{fact:alg1}

Let $p$ be prime, and let $\Gamma = \ZZ_{p^{a_1}} \times \cdots  \times \ZZ_{p^{a_t}}$, where $a_i \in \NN$ for all $1 \le i \le t$. Let $\Gamma'$ be a subgroup of $\Gamma$. Then $\Gamma' \cong \ZZ_{p^{b_1}} \times \cdots  \times \ZZ_{p^{b_t}}$, where each $b_i \le a_i$ for all $1 \le i \le t$. 
\end{fact}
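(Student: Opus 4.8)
The plan is to compare $\Gamma$ and $\Gamma'$ through their $p$-power torsion subgroups, which together with \Cref{thm:bascigroupthm} encode the exponents $a_i$. First note that $\Gamma'$, being a subgroup of a finite abelian $p$-group, is itself a finite abelian $p$-group, so \Cref{thm:bascigroupthm} lets us write $\Gamma' \cong \ZZ_{p^{b_1}}\times\cdots\times\ZZ_{p^{b_s}}$ with $b_1\ge\cdots\ge b_s\ge 1$. After a harmless relabelling of coordinates we may also assume $a_1\ge\cdots\ge a_t$, and after padding the sequence $(b_i)$ with zeros up to length $t$ it suffices to prove that $b_i\le a_i$ for every $1\le i\le t$ (which in particular forces $s\le t$).

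For a finite abelian group $A$ and an integer $k\ge 0$ write $A[p^k]\coloneqq\{x\in A: p^k x = 0_A\}$, and for $k\ge 1$ set $c_k(A)\coloneqq \log_p\bigl(\lvert A[p^k]\rvert/\lvert A[p^{k-1}]\rvert\bigr)$, a non-negative integer since $A[p^{k-1}]\le A[p^k]$. The proof rests on two observations. First, if $A\cong\ZZ_{p^{a_1}}\times\cdots\times\ZZ_{p^{a_t}}$ then checking each coordinate gives $A[p^k]\cong\ZZ_{p^{\min(a_1,k)}}\times\cdots\times\ZZ_{p^{\min(a_t,k)}}$, so $\lvert A[p^k]\rvert = p^{\sum_i\min(a_i,k)}$ and hence $c_k(A)=\#\{\,i: a_i\ge k\,\}$. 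Second, if $\Gamma'\le\Gamma$ then $\Gamma'[p^k]=\Gamma'\cap\Gamma[p^k]$, and the inclusion $\Gamma'[p^k]\hookrightarrow\Gamma[p^k]$ induces a homomorphism $\Gamma'[p^k]/\Gamma'[p^{k-1}]\to\Gamma[p^k]/\Gamma[p^{k-1}]$ whose kernel is the set of $x\in\Gamma'[p^k]$ lying in $\Gamma[p^{k-1}]$, i.e.\ in $\Gamma'\cap\Gamma[p^{k-1}]=\Gamma'[p^{k-1}]$; so this map is injective and $c_k(\Gamma')\le c_k(\Gamma)$ for every $k\ge 1$.

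Combining the two observations, $\#\{\,i: b_i\ge k\,\}=c_k(\Gamma')\le c_k(\Gamma)=\#\{\,i: a_i\ge k\,\}$ for all $k\ge 1$. Since $(a_i)$ and $(b_i)$ are non-increasing, one has $a_i=\#\{\,k\ge 1: \#\{\,j: a_j\ge k\,\}\ge i\,\}$ and likewise for the $b_i$, and since the inner counting function for $(b_i)$ is pointwise at most that for $(a_i)$ we conclude $b_i\le a_i$ for every $i$, as desired. I do not anticipate a genuine obstacle here: each step is a short verification, and the only points needing mild care are the elementary bookkeeping in the last paragraph that turns the inequalities $c_k(\Gamma')\le c_k(\Gamma)$ into the coordinatewise bound $b_i\le a_i$, and the relabelling and zero-padding needed to recover the statement as phrased (which does not assume the $a_i$ sorted) from the sorted version.
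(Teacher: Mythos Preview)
Your proof is correct. The paper itself does not supply a proof of this fact: it simply records it as a consequence of \Cref{thm:bascigroupthm} and moves on. Your argument via the $p^k$-torsion filtration and the counts $c_k(A)=\#\{i:a_i\ge k\}$ is the standard route (essentially a conjugate-partition comparison), and each step checks out, including the injectivity of $\Gamma'[p^k]/\Gamma'[p^{k-1}]\hookrightarrow\Gamma[p^k]/\Gamma[p^{k-1}]$ and the final bookkeeping that turns pointwise domination of the $c_k$'s into $b_i\le a_i$. The remark about sorting and zero-padding is also handled correctly: once you have $b'_i\le a_{\sigma(i)}$ for the sorted $a$'s, defining $b_{\sigma(i)}\coloneqq b'_i$ recovers the unsorted statement, and $s\le t$ follows from $c_1(\Gamma')\le c_1(\Gamma)$.
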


\begin{fact}[\!\! {\cite[Theorem 3.1]{min_gen_set}}]\label{fact:alg2} 

 Let $\Gamma$ be a finite abelian group with invariant factor decomposition given by $\Gamma = \mathbb{Z}_{m_1} \times\cdots\times \mathbb{Z}_{m_t}$,
    where $m_i \mid m_{i+1}$ for each $1 \le i \le t-1$. 
    Let $s$ be the smallest positive integer such that $\Gamma = \langle a_1,\ldots,a_s\rangle$ for some $\{a_1,\ldots,a_s\} \se \Gamma$. \\
    Then $s = t$. 
\end{fact}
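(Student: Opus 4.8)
The plan is to prove the two inequalities $s \le t$ and $t \le s$ separately; the first is trivial and the second carries the content of the statement.

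For $s \le t$, I would simply use the decomposition $\Gamma \cong \mathbb{Z}_{m_1} \times \cdots \times \mathbb{Z}_{m_t}$ supplied by Theorem \ref{thm:bascigroupthm} (with the standard convention that all the invariant factors satisfy $m_i \ge 2$, which is what makes $t$ well defined). Writing $e_i$ for a generator of the $i$-th cyclic factor, the set $\{e_1, \dots, e_t\}$ generates $\Gamma$, so there is a generating set of size $t$ and hence $s \le t$.

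For $t \le s$, the idea is to pass to a quotient of $\Gamma$ that is a vector space, where ``number of generators'' becomes ``dimension''. Since $s$ is a positive integer, $\Gamma$ is nontrivial, so $m_1 \ge 2$; fix a prime $p$ dividing $m_1$. Because $m_1 \mid m_2 \mid \cdots \mid m_t$, it follows that $p \mid m_i$ for every $i$, and therefore the multiplication-by-$p$ map on $\mathbb{Z}_{m_i}$ has kernel of size $\gcd(p, m_i) = p$, so its image $p\mathbb{Z}_{m_i}$ has index $p$ and $\mathbb{Z}_{m_i}/p\mathbb{Z}_{m_i} \cong \mathbb{Z}_p$. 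Taking the product over $i$ and using $p\Gamma = \prod_i p\mathbb{Z}_{m_i}$, I would conclude that $\Gamma/p\Gamma \cong (\mathbb{Z}_p)^t$; in particular $\Gamma/p\Gamma$ is an $\mathbb{F}_p$-vector space of dimension $t$. Now if $\Gamma = \langle a_1, \dots, a_s \rangle$, then reducing modulo $p\Gamma$ shows that the images $\bar a_1, \dots, \bar a_s$ generate $\Gamma/p\Gamma$ as an abelian group; but over the prime field $\mathbb{F}_p$ additive generation is the same as $\mathbb{F}_p$-linear spanning (every scalar is one of $0, 1, \dots, p-1$), so $s \ge \dim_{\mathbb{F}_p}(\Gamma/p\Gamma) = t$. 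Together with the first part this gives $s = t$.

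I do not anticipate any real obstacle: the whole argument is short and elementary, and notably uses only the existence of the invariant factor decomposition in Theorem \ref{thm:bascigroupthm}, not its uniqueness, and does not require Fact \ref{fact:alg1}. The only points deserving a sentence of care are the convention $m_i \ge 2$ (needed for $t$ to be well defined) and the computation $[\mathbb{Z}_m : p\mathbb{Z}_m] = p$ when $p \mid m$, both of which are immediate.
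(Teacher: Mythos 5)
Your proof is correct, but there is nothing in the paper to compare it against: the paper states this as a \emph{Fact} cited to Theorem 3.1 of the reference \cite{min_gen_set} and gives no proof of its own. Your argument is the standard textbook one. The bound $s \le t$ is immediate from the existence of the invariant factor decomposition, and for $t \le s$ you reduce modulo a prime $p \mid m_1$; since $m_1 \mid m_i$ for all $i$, you get $\Gamma/p\Gamma \cong (\mathbb{Z}_p)^t$, an $\mathbb{F}_p$-vector space of dimension $t$, and the images of any generating set of $\Gamma$ must span it, so $s \ge t$. The two delicate points you single out, namely the convention $m_i \ge 2$ (which the paper's Theorem~\ref{thm:bascigroupthm} does enforce, since it requires $\beta_i \ge 1$) and the computation $[\mathbb{Z}_m : p\mathbb{Z}_m] = p$ for $p \mid m$, are handled correctly, and you are right that neither uniqueness of the decomposition nor Fact~\ref{fact:alg1} is needed. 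If you wanted to inline this in the paper rather than cite it, this proof would do.
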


\begin{proof}[Proof of Lemma \ref{algebra2}]
    Write $\kappa=\prod_{p\in S} p^{k_p}$ where $S$ is a set of prime divisors of $\kappa$.
 By \Cref{thm:bascigroupthm},  we assume that $  \Gamma=J\times \prod_{p\in S}\left( \mathbb{Z}_{p^{a_{(p,1)}}} \times \cdots  \times \mathbb{Z}_{p^{a_{(p,t_p)}}}\right)$, where $a_{(p,i)}\geq a_{(p,i+1)}$ for all $1 \le i<t_p$ and $J$ is some subgroup of $\Gamma$. Let $i_p$ be the last $i$ such that $a_{(p,i)}\geq k_p$.  Then   $$\cK = \prod_{p\in S} \left(\prod _{i:a_{(p,i)\geq k_p}}p^{k_p}\cdot \prod_{i:a_{(p,i)}<k_p}p^{a_{(p,i)}}\right)=\prod_{p\in S} \left(p^{k_pi_p}\cdot \prod_{t_p\geq i>i_p}p^{a_{(p,i)}}\right).$$  Thus, $$\kappa^x/\cK =\prod_{p\in S} \left(p^{(x-i_p)k_p}\cdot \prod_{t_p\geq i>i_p}p^{-a_{(p,i)}}\right).$$
    
    Further let $\Gamma_p=\mathbb{Z}_{p^{a_{(p,1)}}} \times \cdots  \times \mathbb{Z}_{p^{a_{(p,t_p)}}}$ and define the projection $\psi_p:\Gamma\to \Gamma_p$. Note that $$|J| = |\Gamma|\cdot \prod_{p\in S}p^{-\sum_{i=1}^{t_p} a_{p,i}}.$$
    It is clear that for any $X\subset \Gamma$ we have $$|\langle X \rangle| \leq |J| \cdot \prod_{p\in S}|\langle \psi_p(X)\rangle|=|\Gamma|\cdot \prod_{p\in S}p^{-\sum_{i=1}^{t_p} a_{(p,i)}} \cdot \prod_{p\in S}|\langle \psi_p(X)\rangle|.$$ Let $S'=\{p\in S: t_p\geq x\}$. Then we want to show for all $p\in S'$ that $$|\langle \psi_p(X)\rangle|\leq p^{\sum_{i= 1}^{x} a_{(p,i)}},$$
    as for $p\in S\setminus S'$ we instead simply have $$|\langle \psi_p(X)\rangle|\leq p^{\sum_{i=1}^{t_p}a_{(p,i)}}.$$

    Fixing $p\in S'$ let $a_i\coloneqq a_{(p,i)}$ and $t\coloneqq t_p$. As $\langle \psi_p(X) \rangle$ is a subgroup of $\Gamma_p = \mathbb{Z}_{p^{a_1}} \times \cdots  \times \mathbb{Z}_{p^{a_t}}$, by \Cref{fact:alg1}, we have that \begin{equation*}\label{eq:prime_decomp}
        \langle \psi_p(X) \rangle \cong \mathbb{Z}_{p^{b_1}} \times \cdots  \times \mathbb{Z}_{p^{b_t}}
    \end{equation*} where each $b_i \le a_i$. We note that $|\psi_p(X)| \le x$, so 
    \begin{equation*}\label{eq:non-prime_decomp}
        \langle \psi_p(X)\rangle \cong \mathbb{Z}_{m_1} \times\cdots\times \mathbb{Z}_{m_k},
    \end{equation*}
    where the $m_i$ are given by \Cref{thm:bascigroupthm}. Note that $k \le x$ by \Cref{fact:alg2}. However, as no two of $p^{b_1}, \ldots, p^{b_t}$ are coprime, we must have $k=t$ and $m_i = p^{b_i}$.  We have $|\langle \psi_p(X) \rangle| \le |\mathbb{Z}_{p^{a_1}} \times \cdots  \times \mathbb{Z}_{p^{a_x}}| = p^{\sum_{i=1}^xa_i}$, as required.

    Then $$  \begin{aligned}
       |\langle X\rangle| \frac{\cK}{\kappa^x}&\leq |\Gamma| \cdot \prod_{p\in S'} \left(p^{(i_p-x)k_p-\sum\limits_{i=x+1}^{t_p}a_{(p,i)}+\sum_{i=i_p+1}^{t_p}a_{(p_,i)}}\right)\cdot \prod_{p\in S\setminus S'}\left(p^{(i_p-x)k_p+\sum\limits_{i=i_p+1}^{t_p}a_{(p,i)}}\right)\\
        &=|\Gamma| \cdot \prod_{\substack{p\in S':\\ i_p\geq x}} \left(p^{(i_p-x)k_p-\sum\limits_{i=x+1}^{i_p}a_{(p,i)}}\right)\cdot \prod_{\substack{p\in S':\\ i_p<x}} \left(p^{(i_p-x)k_p+\sum\limits_{i=i_p+1}^{x}a_{(p,i)}}\right)\cdot   \prod_{p\in S\setminus S'}\left(p^{(i_p-x)k_p+\sum\limits_{i=i_p+1}^{t_p}a_{(p,i)}}\right) \\
        &\leq |\Gamma| \cdot \prod_{\substack{ p\in S':\\
        i_p\geq x}}\left(p^{(i_p-x)k_p-(i_p-x)k_p}\right)\cdot \prod_{\substack{p\in S':\\ i_p<x}} \left(p^{(i_p-x)k_p+(x-i_p)(k_p-1)}\right)\cdot \prod_{p\in S\setminus S'}\left(p^{(i_p-x)k_p+(t_p-i_p)(k_p-1)}\right)\\
        &= |\Gamma| \cdot \prod_{p\in S: i_p<x} p^{-x+i_p}\cdot \prod_{p\in S\setminus S'}p^{(t_p-x)k_p-t_p+i_p}\\
        &\leq |\Gamma|
         \end{aligned} $$ since for all $p\in S\setminus S'$, $t_p<x$. This concludes the proof.
\end{proof}
\end{document}